\DeclareMathOperator{\I}{\mathcal{I}}
\DeclareMathOperator{\Q}{\mathbb{Q}}
\DeclareMathOperator{\Res}{Res}
\DeclareMathOperator{\wronsk}{Wr}
\DeclareMathOperator{\C}{\mathbb{C}}
\DeclareMathOperator{\Z}{\mathbb{Z}}
\DeclareMathOperator{\Znn}{\mathbb{Z}_{\geqslant 0}}
\newcommand{\Znncomp}[1]{\overline{\mathbb{Z}}_{\geqslant 0}^{#1}}
\newcommand*\rot{\rotatebox{90}}
\newcommand*\OK{\ding{51}}
\newcommand*\NOK{\ding{55}}
\newtheorem{example}{Example}
\newtheorem{notation}{Notation}
\newtheorem{remark}{Remark}
\title{Differential elimination for dynamical models via projections\\ with applications to structural identifiability}
\author{Ruiwen Dong\footnote{\'Ecole Polytechnique, Route de Saclay, 91120 Palaiseau, France; Department of Computer Science, University of Oxford, Oxford, OX1 3QD, United Kingdom}, Christian Goodbrake\footnote{Oden Institute for Computational Engineering and Sciences, University of Texas, Austin, 201 E 24th St, Austin, TX 78712; Mathematical Institute, University of Oxford, Oxford, OX2 6GG, United Kingdom}, Heather A Harrington\footnote{Mathematical Institute, University of Oxford, Oxford, OX2 6GG, United Kingdom; Wellcome Centre for Human Genetics, University of Oxford, Oxford, OX3 7BN, United Kingdom}, and Gleb Pogudin\footnote{LIX, CNRS, \'Ecole Polytechnique, Institute Polytechnique de Paris, 1 rue Honor\'e d'Estienne d'Orves, 91120, Palaiseau, France, email: \url{gleb.pogudin@polytechnique.edu}}}
\date{}
\begin{document}

\maketitle

\begin{abstract}
Elimination of unknowns in a system of differential equations is often required when analysing (possibly nonlinear) dynamical systems models, where only a subset of variables are observable. One such analysis, identifiability, often relies on computing input-output relations via differential algebraic elimination. Determining identifiability, a natural prerequisite for meaningful parameter estimation, is often prohibitively expensive for medium to large systems due to the computationally expensive task of elimination.
We propose an algorithm that computes a description of the set of differential-algebraic relations between the input and output variables of a dynamical system model.
The resulting algorithm outperforms general-purpose software for differential elimination on a set of benchmark models from literature.
We use the designed elimination algorithm to build a new randomized algorithm for assessing structural identifiability of a parameter in a parametric model.
A parameter is said to be identifiable if its value can be uniquely determined from input-output data assuming the absence of noise and sufficiently exciting inputs.
Our new algorithm allows the identification of models that could not be tackled before.
Our implementation is publicly available as a Julia package at~\url{https://github.com/SciML/StructuralIdentifiability.jl}.
\end{abstract}

\begin{keywords}
  differential elimination, parameter identifiability, dynamical systems
\end{keywords}

\begin{AMS}
  12H05, 
  13P25, 
  93B30, 
  93B25, 
  34A55 
\end{AMS}

\section{Introduction}

\subsection{Overview}

Models defined by systems of differential equations are widely used in engineering and sciences.
One of the fundamental challenges in designing, testing, calibrating, and using such models comes from the fact that, in practice, often only few of the variables are observed/measured.
Despite a deluge of data and advances in technology, the appropriate data may be inaccessible (e.g., prohibitively expensive to attach sensors to all parts of a mechanism, infeasible to observe protein complexes in distinct states, or impossible to measure all the proteins in a model in a single experimental set up).
Therefore, the relations involving only observable variables play an important role in the systems and control theory (referred to as \emph{input-output relations}, see, e.g., the textbooks~\cite{Conte2007, Sontag1998} and~\cite{Wang1995}).
For example, computing these input-output relations explicitly is a key step in a so-called differential algebra approach to assessing structural identifiability of a dynamical model~\cite{OllivierPhD, DAISY, LG94, MED2009}.
Other applications include linearization~\cite{Glumineau1996}, model selection~\cite{selection}, parameter estimation~\cite{Verdiere2005, DenisVidal2003}, fault diagnosis~\cite{Jiafan2009, Staroswiecki2001}, and control~\cite{Komatsu2020}.

The problem of computing the input-output relations can be viewed as the \emph{differential elimination problem}: given a system of differential equations
\[
  f_1(\mathbf{x}, \mathbf{y}) = \ldots = f_n(\mathbf{x}, \mathbf{y}) = 0
\]
in two groups of unknowns $\mathbf{x} = (x_1, \ldots, x_s)$ and $\mathbf{y} = (y_1, \ldots, y_\ell)$, describe all equations $g(\mathbf{y}) = 0$ in $\mathbf{y}$ only,
which hold for every solution of the system.
This problem has been one of the central problems in the algebraic theory of differential equations.
Its study has been initiated by Ritt, the founder of differential algebra, in the 1930s~\cite{Ritt}.
He developed the foundations of the characteristic set approach, which has been made fully constructive by Seidenberg~\cite{Seidenberg}.
The algorithmic aspect of this research culminated in the Rosenfeld-Gr\"obner algorithm~\cite{Boulier2, Hubert2003b} implemented in the BLAD library~\cite{blad} (available through {\sc Maple}).
See~\cite{diff_Thomas, WANG2002} for related software.
These algorithms and packages are very versatile: they can be applied to arbitrary systems of polynomials PDEs.
There is a price to pay for such versatility: many interesting examples coming from applications cannot be tackled in a reasonable time.
On the other hand, since differential equations in sciences and engineering are typically used to describe how the system of interest will evolve from a given state, many dynamical models in the literature are described by systems in \emph{the state-space form}:
\begin{equation}\label{eq:state_space}
  \mathbf{x}' = \mathbf{f}(\mathbf{x}, \mathbf{u}), \quad \mathbf{y} = \mathbf{g}(\mathbf{x}, \mathbf{u})
\end{equation}
where $\mathbf{f}$ and $\mathbf{g}$ are tuples of rational functions, $\mathbf{x}$, $\mathbf{y}$, and $\mathbf{u}$ are tuples of differential unknowns (the state, output, and input variables, respectively).
For such a system, one typically wants to eliminate the $x$-variables, that is, compute the \emph{input-output relations}, the relations between the $y$-variables and $u$-variables.
For example, such a computation is sufficient for all the applications mentioned in the beginning.

The contribution of the present paper is two-fold:
\begin{enumerate}
    \item \textbf{Elimination.} We propose \emph{a new way to describe the input-output relations} of~\eqref{eq:state_space} and design \emph{a computationally tractable algorithm} for computing this description. 
    We demonstrate that this algorithm outperforms the existing general-purpose elimination software (e.g., computations that took hours or didn’t finish are computed in minutes, see Table~\ref{tab:comparison_elimination}).

    \item \textbf{Identifiability.} We build \emph{a new randomized algorithm for assessing structural identifiability} of parametric dynamical models on top of the elimination algorithm.
    The algorithm can handle problems that could not be solved by any existing identifiability software (see comparison to the state-of-the-art methods in Table~\ref{tab:comparison_identifiability}).
    Our software is available as a Julia package at~\url{https://github.com/SciML/StructuralIdentifiability.jl}.
\end{enumerate}

The next two subsections describe the contributions in more detail.


\subsection{Elimination} 

We propose to use a projection-based description of the input-output relations of~\eqref{eq:state_space} which can be viewed as a generalization of the state-space form (that is, the form~\eqref{eq:state_space}) itself.
In order to motivate and introduce this description, we adopt the following algebro-geometric viewpoint on~\eqref{eq:state_space}: we consider all the derivatives of the equations in~\eqref{eq:state_space} as an infinite system of equations describing a variety in an infinite-dimensional space with the coordinates $\mathbf{x}, \mathbf{y}, \mathbf{u}, \mathbf{x}', \mathbf{y}', \mathbf{u}', \ldots$.
The points of this variety over $\mathbb{C}$ will be in bijective correspondence (via the Taylor series~\cite[Lemma 3.5]{noether}) with the formal power series solutions of~\eqref{eq:state_space}.
We observe that the variety is rationally parametrized by $\mathbf{x}, \mathbf{u}, \mathbf{u}', \ldots$ (we will refer to these variables as \emph{the base variables}).
Moreover, each equation in~\eqref{eq:state_space} relates these base variables and one non-base variable of the lowest order, that is, one of $\{\mathbf{x}', \mathbf{y}\}$.

We can now generalize such a description to the notion of \emph{a projection-based representation} of the ideal $\mathcal{I}$ generated by the derivatives of~\eqref{eq:state_space} by allowing
\begin{itemize}
    \item any set of base variables such that they form a transcendence basis modulo $\mathcal{I}$ and, if $w^{(i)}_j$ (the $i$-th derivative of $w_j$) with $w \in \{x, y, u\}$ is a base variable, then $w_j^{(0)}, \ldots, w_j^{(i - 1)}$ are base variables as well;
    \item the relations between the base variables and one of the lowest order non-base variables (\emph{projections}) be nonlinear in the latter (i.e., parametrization of the variety may no longer be  rational).
\end{itemize}

\begin{example}\label{ex:toy_example}
A toy example of changing the base variables (base variables are underlined):
\begin{equation}\label{eq:pb-toy}
  \begin{cases}
  x' = \underline{x},\\
  y = \underline{x}^2 + \underline{u}
  \end{cases}
  \;\implies (\text{using }y' = 2xx' + u' = 2x^2 + u') \implies\;\; \begin{cases}
   x^2 = \underline{y} - \underline{u},\\ y' = 2\underline{y} - 2\underline{u} + \underline{u'}.
   \end{cases}
\end{equation}
Note that such a change of the set of base variables may add extraneous prime components to the variety, this subtlety and the way we deal with it are discussed in Remark~\ref{rem:extra}.
\end{example}

Example~\ref{ex:toy_example} also suggests how projection-based representations could be used for differential elimination: observe that the last equation in~\eqref{eq:pb-toy}, $y' = 2y - 2u + u'$, is an input-output equation of minimal order for the original model.
Indeed, we will show that in general, if one considers a set of base variables containing as many derivatives of $y$-variables as possible, a subset of projections will form a projection-based representation of the ideal of input-output relations (see Lemma~\ref{lem:proj_proj}).
The main idea is to replace the $x$-variables in the set of base variables one by one with $y$-variables and their derivatives.
We visualize such a computation for a simple artificial example on Figure~\ref{tab:yound_oscillator}.

\ytableausetup{centertableaux}

\setcounter{table}{0}
\setlength{\tabcolsep}{-1pt}
\begin{table}[H]
\centering
\begin{tabular}{ccccccc}
    $\begin{cases}
      x_1' = x_2,\\
      x_2' = x_3,\\
      x_3' = x_1,\\
      y_1 = x_1 + x_2,\\
      y_2 = x_3
    \end{cases}$ & $\implies$ & $\begin{cases}
      x_1' = x_2,\\
      x_2' = y_2,\\
      x_3 = y_2,\\
      y_1 = x_1 + x_2,\\
      y_2' = x_1
    \end{cases}$ & $\implies$ & $\begin{cases}
      x_1' = y_1 - x_1,\\
      x_2 = y_1 - x_1,\\
      x_3 = y_2,\\
      y_1' = y_1 - x_1 + y_2,\\
      y_2' = x_1
    \end{cases}$ & $\implies$ & $\begin{cases}
      x_1 = y_2',\\
      x_2 = y_1 - y_2',\\
      x_3 = y_2,\\
      y_1' = y_1 - y_2' + y_2,\\
      y_2'' = y_1 - y_2'
    \end{cases}$\\
    & & & & & & \\
    \begin{ytableau}
    \none & \none & \none & \none & \none \\
    x_1 & x_2 & x_3 & \none & \none \\
   \none[x_1] & \none[x_2] & \none[x_3] & \none[y_1] & \none[y_2]
   \end{ytableau}& $\to$ &
   \begin{ytableau}
    \none & \none & \none & \none & \none \\
    x_1 & x_2 & \none & \none & y_2 \\
   \none[x_1] & \none[x_2] & \none[x_3] & \none[y_1] & \none[y_2]
   \end{ytableau}& $\to$ &
    \begin{ytableau}
    \none & \none & \none & \none & \none \\
    x_1 & \none & \none & y_1 & y_2 \\
   \none[x_1] & \none[x_2] & \none[x_3] & \none[y_1] & \none[y_2]
   \end{ytableau}& $\to$ & \begin{ytableau}
    \none & \none & \none & \none & y_2' \\
    \none & \none & \none & y_1 & y_2 \\
   \none[x_1] & \none[x_2] & \none[x_3] & \none[y_1] & \none[y_2]
   \end{ytableau}
\end{tabular}
\captionsetup{name=Figure}
\caption{Elimination via a chain of projection-based representations.
The diagrams in the second row describe the set of base variables at each step.}\label{tab:yound_oscillator}
\end{table}
\setlength{\tabcolsep}{6pt}

\setcounter{table}{0}

The key features of the projection-based representation that allow us to translate such an approach into a practically efficient algorithm are the following:
\begin{itemize}
    \item 
    The projection based representation is given by equations of the projections of the original variety defined by $\mathcal{I}$; hence, \textit{geometric} in nature. In contrast to syntactic representations, such as characteristic sets or Gr\"obner bases, the projection based representation enables us to use tools from constructive algebraic geometry.

    \item The elimination process visualized in Figure~\ref{tab:yound_oscillator} is \emph{adaptive} in the sense that, after each step, the algorithm often has a choice which of the $x$'s to replace with one of the $y$'s (e.g., at the first step in Figure~\ref{tab:yound_oscillator}, we could eliminate any of $x_1, x_2, x_3$ from the base variables).
    By employing simple degree-based heuristics for this choice, we are able to speed up the computation substantially.
    While the described algorithm may be reminiscent of the Gr\"obner walk algorithm~\cite{COLLART1997} and its relatives~\cite{Dahan2008, Golubitsky2009}, especially~\cite{Boulier2010}, the ranking of $y$’s is not predetermined. 
    The distinctive feature of the proposed algorithm is that the final ranking of y’s can be constructed \emph{adaptively and dynamically}.
    Note that the choice of ranking affects only the runtime of the algorithm but not the correctness of the result.
\end{itemize}

Since each step of the algorithm boils down to elimination of a single variable (one of the $x$'s), we use resultants as the main algebraic elimination tool.
While resultants are known to produce extraneous factors, we use two strategies to address this issue:
\begin{enumerate}
    \item Thanks to efficient algorithms~\cite{solutionsSODA, vanderHoeven2002, vanderHoeven2010} for computing power series solutions of the original system~\eqref{eq:state_space}, we have an efficient randomized membership test for ideal $\mathcal{I}$.
    This membership test allows us to remove extraneous factors after each resultant computation and evade accumulation of these factors during repeated resultant computation.
    \item Our algorithm may perform a change of variables in the original system.
    Therefore, some of these extraneous factors are extracted \emph{before} computing the resultant (see Section~\ref{sec:var_change}), hence speeding up 
     the computation substantially (Table~\ref{table:change}).
\end{enumerate}


\subsection{Identifiability} 
In a parametric ODE model (that is, \eqref{eq:state_space} in which coefficients involve unknown scalar parameters), a parameter or a function of parameters is called \emph{structurally globally identifiable} (in what follows, just ``identifiable'') if its value can be uniquely determined from the input-output data, assuming the absence of noise and sufficiently exciting inputs.
This identifiability property is a natural prerequisite for practical parameter estimation, and, 
therefore, it is an important step in the experimental design process.
The problem of assessing identifiability has been studied since the 1970-s~\cite{bellman-astrom-70}.
Since then, a number of different approaches have been proposed (see \cite{comparison, HOPY2020} and references therein) and several software packages and webapps have been developed~\cite{DAISY, COMBOS, LFCBBCH,webapp}.
However, many models of practical interest remain out of reach with the existing tools.

One popular approach to assess identifiability is the \emph{input-output relations} approach proposed in~\cite{OllivierPhD} (recently used, e.g., in~\cite{Remien2021, Yeung2020, Tuncer2021}) and relies on theory of differential algebra.
It has been implemented in DAISY~\cite{DAISY}, COMBOS~\cite{COMBOS}, and Structural Identifiability Toolbox~\cite{webapp}.
First, the software computes a set of generators of the field of definition $F$ of the ideal of the input-output relations, that is, the minimal subfield of the field of rational functions in parameters sufficient to write down the generators of this ideal~\cite[Definition~2.5]{ioaaecc}.
These generators are typically taken to be the coefficients of a finite set of input-output relations, which may be computed using a characteristic set (as in DAISY~\cite{DAISY}), a Gr\"obner basis (as in COMBOS~\cite{COMBOS}), or elimination by hand (e.g., \cite{Remien2021, Tuncer2021}).
Next, the identifiability of a parameter is assessed by testing whether it belongs to $F$.
There are two important subtleties with existing algorithms following this approach:
\begin{enumerate}
    \item \textbf{Single vs. multiple experiments.} 
    In general, $F$ is equal to the field of functions identifiable from \emph{several} experiments~\cite[Theorem~19]{allident}.
    There is a sufficient condition ensuring that all the elements of $F$ can be identified from a single experiment which can be verified by computing the Wronskian of the monomials of some input-output relations~\cite[Lemma~1]{ioaaecc}.
    The condition is not checked by DAISY or COMBOS.
    However, it has been checked manually in several case studies~\cite{DVJBNP01, XiaMoog, Verdiere2005} with about a dozen of monomials and can be checked automatically by~\cite{webapp} if the number of monomials does not exceed a hundred.
    
    \item \textbf{Probability of correctness.}
    The membership test of a parameter of interest in $F$ is typically framed as an injectivity test for a so-called coefficient map.
    While this can be done deterministically using Gr\"obner bases with rational function coefficients, such a computation would be very costly.
    Because of this, both DAISY and COMBOS take a random point in the image of the map and check whether the preimage of the point is of cardinality one. 
    Such an approach may yield an incorrect result if the chosen point was not generic, and the bounds on the probability of error are not provided by DAISY or COMBOS.
\end{enumerate}

We address these issues and design a new algorithm for assessing structural parameter identifiability. 
More precisely, we:
\begin{enumerate}
    \item show how to compute the field of definition $F$ from a projection-based representation of the ideal of input-output relations (Algorithm~\ref{alg:field_def}).
    \item design and implement a practical algorithm for checking the necessary condition for the elements of $F$ to be identifiable from a single experiment, which terminates even when the input-output relations have a couple of thousands of monomials (Algorithm~\ref{alg:wronsk}).
    \item give a probability bound for testing field membership via randomization (Theorem~\ref{thm:sampling}).
\end{enumerate}
The last two items above can be used with any other elimination algorithms (e.g., used in~\cite{DAISY, COMBOS, webapp}).

The resulting algorithm is implemented in Julia in the {\sc StructuralIdentifiability} package\footnote{\url{https://github.com/SciML/StructuralIdentifiability.jl}} which is a part of the SciML (scientific machine learning) ecosystem.
Our implementation addresses the two issues outlined above and can solve problems that could not be computed before (see Table~\ref{tab:comparison_identifiability}).

\subsection{Structure of the paper}

The rest of the paper is organized as follows.
Section~\ref{sec:prelim} contains preliminaries on differential algebra, structural identifiability, and a precise description of the projection-based representation. 
Section~\ref{sec:main} describes our main theoretical results.
In Section~\ref{sec:pb_alogs} we describe and justify our algorithm for differential elimination via the projection-based representation.
In Section~\ref{sec:identifiability}, we propose an algorithm for assessing structural identifiability based on our elimination algorithm.
Section~\ref{sec:performance} describes our implementation and its performance.
The benchmark models are listed in the Appendix.


\section{Preliminaries}\label{sec:prelim}

\subsection{Differential algebra}

Throughout the paper, the ideal generated by $f_1, \ldots, f_n$ will be denoted by $\langle f_1, \ldots, f_n \rangle$.

\begin{definition}[Differential rings and fields]\label{def:diffrings}
  A {\em differential ring} $(R,\,')$ is a commutative ring with a derivation $'\!\!:R\to R$, that is, a map such that, for all $a,b\in R$, $(a+b)'=a'+b'$ and $(ab)'=a'b+ab'$. 
  A {\em differential field} is a differential ring that is a field.
  For  $i>0$,  $a^{(i)}$ denotes the $i$-th order derivative of $a \in R$.
\end{definition}

\begin{notation}
  Let $x$ be an element of a differential ring and $h \in \Znn$. We introduce
  \begin{align*}
      x^{(<h)} &:= (x, x', \ldots, x^{(h - 1)}),\\
      x^{(\infty)} &:= (x, x', x'', \ldots).
  \end{align*}
  $x^{(\leqslant h)}$ is defined analogously.
  If $\mathbf{x} = (x_1, \ldots, x_n)$ is a tuple of elements of a differential ring and $\mathbf{h} = (h_1, \ldots, h_n) \in (\Znn \cup\{\infty\})^n$, then
  \begin{align*}
      \mathbf{x}^{(< h)} &:= (x_1^{(< h)}, \ldots, x_n^{(<h)}),\\
      \mathbf{x}^{(< \mathbf{h})} &:= (x_1^{(< h_1)}, \ldots, x_n^{(< h_n)}),\\
      \mathbf{x}^{(\infty)} &:= (x_1^{(\infty)}, \ldots, x_n^{(\infty)}).
  \end{align*}
\end{notation}

\begin{definition}[Differential polynomials]
  Let $R$ be a differential ring. 
  Consider a ring of polynomials in infinitely many variables
  \[
  R[x^{(\infty)}] := R[x, x', x'', x^{(3)}, \ldots]
  \]
  and extend the derivation from $R$ to this ring by $(x^{(j)})' := x^{(j + 1)}$.
  The resulting differential ring is called \emph{the ring of differential polynomials in $x$ over $R$}.
  
  The ring of differential polynomials in several variables is defined by iterating this construction.
\end{definition}

\begin{definition}[Differential ideals]
   Let $S : =R[x_1^{(\infty)}, \ldots, x_n^{(\infty)}]$  be a ring of differential polynomials over a differential ring $R$.
   An ideal $I \subset S$ is called \emph{a differential ideal} if $a' \in I$ for every $a \in I$.
  
   One can verify that, for every $f_1, \ldots, f_s \in S$, the ideal
   \[
      \langle f_1^{(\infty)}, \ldots, f_s^{(\infty)} \rangle
    \]
    is a differential ideal.
    Moreover, this is the minimal differential ideal containing $f_1, \ldots, f_s$, and we will denote it by $\langle f_1, \ldots, f_s \rangle^{(\infty)}$.
\end{definition}

\begin{notation}[Saturation]
  Let $R$ be a ring, $I \subset R$ be an ideal, and $a \in R$.
  We introduce
  \[
      I \colon a^{\infty} := \{b \in R \mid \exists N\colon a^Nb \in I\},
  \]
  which is also an ideal in $R$.
\end{notation}


\subsection{Structural identifiability}\label{sec:identifiability_into}

 Consider an ODE system of the form
 \begin{equation}\label{eq:ODEmodel}
   \Sigma = \begin{cases}
    \mathbf{x}' = \mathbf{f}(\mathbf{x}, \bm{\mu}, \mathbf{u}),\\
    \mathbf{y} = \mathbf{g}(\mathbf{x}, \bm{\mu}, \mathbf{u}),
    \end{cases}
   \end{equation}
where 
\begin{itemize}
    \item $\mathbf{x} = (x_1, \ldots, x_n)$, $\mathbf{y} = (y_1, \ldots, y_m)$, and $\mathbf{u} = (u_1, \ldots, u_s)$ are the vectors of state, output, and input variables, respectively;
    \item $\bm{\mu} = (\mu_1, \ldots, \mu_\ell)$ is a vector of constant parameters;
    \item $\mathbf{f} = (f_1, \ldots, f_n)$ and $\mathbf{g} = (g_1, \ldots, g_m)$ are vectors of elements of $\C(\mathbf{x}, \bm{\mu}, \mathbf{u})$
\end{itemize}

By reducing $\mathbf{f}$ and $\mathbf{g}$ to the common denominator, we write $\mathbf{f} = \mathbf{F}/Q$ and $\mathbf{g} = \mathbf{G}/Q$, for $F_1, \ldots, F_n, G_1, \ldots, G_m, Q \in \C[\mathbf{x}, \bm{\mu}, \mathbf{u}]$.
Consider the (prime, see \cite[Lemma~3.2]{HOPY2020}) differential ideal
\begin{equation}\label{eq:Isigma_gens}
   I_\Sigma := \langle Qx_i' - F_i, Qy_j - G_j,\, 1\leqslant i\leqslant n,\,1\leqslant j \leqslant m\rangle^{(\infty)} \colon Q^\infty \subset \C(\bm{\mu})[\mathbf{x}, \mathbf{y}, \mathbf{u}].
\end{equation}
Note that every element of $I_\Sigma$ vanishes on every analytic solution of~\eqref{eq:ODEmodel}.

We will use the following algebraic definition of identifiability. 
Its equivalence with the general analytic definition~\cite[Definition~2.5]{HOPY2020} (see also~\cite[Definition~2.3]{OPT19}) has been established~\cite[Proposition~3.4]{HOPY2020}.

\begin{definition}[Single-experiment identifiability] 
  A rational function $h \in\C(\bm{\mu})$ will be called {\em globally (single-experiment, or SE-) identifiable } for~\eqref{eq:ODEmodel} if, 
  there exist $P_0, P_1 \in \C[\mathbf{y}^{(\infty)}, \mathbf{u}^{(\infty)}]$ such that $P_1 \not\in I_\Sigma$ and $P_1h - P_{0} \in I_\Sigma$.

  We also say that $h$ is \emph{locally SE-identifiable} if there exist a positive integer $s$ and $P_0, \ldots, P_s \in \C[\mathbf{y}^{(\infty)}, \mathbf{u}^{(\infty)}]$ such that $P_s \not\in I_\Sigma$ and $P_s h^s + P_{s - 1}h^{s - 1} + \ldots + P_0 \in I_\Sigma$.
\end{definition}

\begin{remark}
  Informally, the definition can be stated as follows:
  $h$ is globally SE-identifiable if and only if it can be expressed via inputs, outputs, and their derivatives via a formula $h = \frac{P_0(\mathbf{y}, \mathbf{u})}{P_1(\mathbf{y}, \mathbf{u})}$.
  While the existence of a formula may look like a special case of identifiability, \cite[Proposition~3.4]{HOPY2020} shows that the possibility of a unique identification of $h$ (formulated in analytic terms) is equivalent to the existence of such a formula.
\end{remark}

\begin{example}[SE-identifiability]
  Consider a harmonic oscillator:
  \[
    \Sigma\colon x_1' = -\mu x_2, \; x_2' = \mu x_1,\; y = x_1.
  \]
  Observe that
  \[
    (y - x_1)'' + (x_1' + \mu x_2)' - \mu (x_2' - \mu x_1) + \mu^2 (y - x_1) = y'' + \mu^2 y \in I_\Sigma.
  \]
  From $y'' + \mu^2 y \in I_\Sigma$ we can conclude that $h = \mu^2$ is globally identifiable (with $P_1 = y, P_0 = y''$) and $\mu$ is locally identifiable (with $s = 2$, $P_2 = y, P_1 = 0, P_0 = y''$).
\end{example}

\begin{definition}[Multi-experiment identifiability]\label{def:ident:multi}
  \begin{itemize}
      \item[]
      \item For a model $\Sigma$ and a positive integer $r$, we define \emph{the $r$-fold replica} of $\Sigma$ as
      \[
      \Sigma_r := \begin{cases}
          \mathbf{x}_i' = \mathbf{f}(\mathbf{x}_i, \bm{\mu}, \mathbf{u}_i), \;\; i = 1, \ldots, r,\\
          \mathbf{y}_i = \mathbf{g}(\mathbf{x}_i, \bm{\mu}, \mathbf{u}_i), \;\; i = 1, \ldots, r,
      \end{cases}
      \]
      where $\mathbf{x}_1, \ldots, \mathbf{x}_r, \mathbf{y}_1, \ldots, \mathbf{y}_r, \mathbf{u}_1, \ldots, \mathbf{u}_r$ are new tuples of indeterminates (note that the vector of parameters is not being replicated).
      
      \item For a model $\Sigma$, a rational function $h \in \mathbb{C}(\bm{\mu})$ is \emph{globally multi-experimental identifiable} (ME-identifiable) if there exists a positive integer $r$ such that $h(\bm{\mu})$ is globally SE-identifiable in $\Sigma_r$.
      \emph{Local ME-identifiability} is defined analogously.
  \end{itemize}
\end{definition}

\begin{definition}[Field of definition]
  Let $J \subset K[\mathbf{x}^{(\infty)}]$ (with $\mathbf{x} = (x_1, \ldots, x_n)$) be a differential ideal over a differential field $K$.
  Then the smallest differential subfield $L \subset K$ such that $J$ is generated by $J \cap L[\mathbf{x}^{(\infty)}]$ is called the field of definition of $J$.
\end{definition}

\begin{theorem}[{{\cite[Theorem~21]{allident}}}]
   The field of multi-experimental identifiable functions is generated over $\C$ by the field of definition of 
   \[
     I_{\Sigma} \cap \C(\bm{\mu}) [\mathbf{y}^{(\infty)}, \mathbf{u}^{(\infty)}].
   \]
\end{theorem}

Therefore, the problem of computing the field of ME-identifiable functions reduces to the problem of computing the field of definition of a projection of an irreducible differential-algebraic variety.
Furthermore, using a Wronskian-based criterion~\cite[Lemma~1]{ioaaecc}, one can in many cases (see Table~\ref{tab:our_performance}) establish that SE-identifiable and ME-identifiable functions coincide.

\begin{example}[ME-identifiability]
In order to illustrate the difference between the SE- and ME-identifiability, we consider the following artificial example (a version of~\cite[Example~2.14]{HOPY2020}):
\[
  \Sigma\colon x' = 0,\; y_1 = x,\; y_2 = \mu_1x + \mu_2.
\]
Since $y_1' = y_2' = 0$ and there is no algebraic relation between $y_1$ and $y_2$ modulo $I_\Sigma$, we have $I_\Sigma \cap \C[y_1^{(\infty)}, y_2^{(\infty)}] = \langle y_1, y_2 \rangle^{(\infty)}$.
Therefore, no function of $\mu_1, \mu_2$ is SE-identifiable.

On the other hand, modulo $I_{\Sigma_2}$, we will have
\[
    y_{1, 2} = \mu_1 y_{1, 1} + \mu_2 \quad\text{ and }\quad y_{2, 2} = \mu_1 y_{1, 2} + \mu_2.
\]
These equations yield a linear system for $\mu_1$ and $\mu_2$, in particular, by subtracting these equations, we obtain:
\[
  \mu_1 (y_{1, 1} - y_{2, 1}) - (y_{1, 2} - y_{2, 2}) \in I_{\Sigma_2},
\]
so $\mu_1$ is globally identifiable in $\Sigma_2$ and, thus, globally ME-identifiable in $\Sigma$ (same for $\mu_2$).
\end{example}


\subsection{Projections of an irreducible differential-algebraic variety}

\begin{notation}
\begin{itemize}
    \item[]
    \item $\Znncomp{} := \Znn \cup \{\infty\}$.
    \item For $n > 0$ and $1 \leqslant i \leqslant n$, $\mathbf{e}_{i, n}$ denotes the $i$-th basis vector in $\Z^n$.
    If $n$ is clear from the context, we will write simply $\mathbf{e}_i$.
\end{itemize}
\end{notation}

\begin{definition}[Parametric profile]
  For a prime differential ideal $P \subset K[\mathbf{x}^{(\infty)}]$ with $\mathbf{x} = (x_1, \ldots, x_n)$, a tuple $\mathbf{h} \in \Znncomp{n}$ is called \emph{a parametric profile} for $P$ if the images of $\mathbf{x}^{(< \mathbf{h})}$ form a transcendence basis of $K[\mathbf{x}^{(\infty)}] / P$.
  This notion may be viewed as a generalization of observability indices from control theory~\cite[\S 4.6]{Conte2007}.
\end{definition}

\begin{lemma}\label{lem:id_principal}
  Let $P \subset K[\mathbf{x}^{(\infty)}]$ (with $\mathbf{x} = (x_1, \ldots, x_n)$) be a prime differential ideal and $\mathbf{h} \in \Znncomp{n}$ be a parametric profile for $P$.
  Then the ideal
  \[
    P \cap K[\mathbf{x}^{(< \mathbf{h} + \mathbf{e}_i)}]
  \]
  is principal for every $1 \leqslant i \leqslant n$ such that $h_i \neq \infty$.
\end{lemma}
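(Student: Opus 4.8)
The plan is to realize $R := K[\mathbf{x}^{(< \mathbf{h} + \mathbf{e}_i)}]$ as a univariate polynomial ring over a unique factorization domain and to identify the contraction $P \cap R$ with the ideal generated by a single irreducible polynomial. Since $h_i \neq \infty$, the tuple $\mathbf{x}^{(< \mathbf{h} + \mathbf{e}_i)}$ differs from $\mathbf{x}^{(< \mathbf{h})}$ by exactly one extra variable, namely $t := x_i^{(h_i)}$; hence $R = D[t]$, where $D := K[\mathbf{x}^{(< \mathbf{h})}]$. The ring $D$ is a polynomial ring over a field (in possibly infinitely many variables when some $h_j = \infty$) and is therefore a UFD, so $R = D[t]$ is a UFD as well, and I may freely use content and Gauss's lemma. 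Write $L := \operatorname{Frac}(D)$.

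First I would produce the generator. Because $\mathbf{h}$ is a parametric profile for $P$, the images of $\mathbf{x}^{(< \mathbf{h})}$ form a transcendence basis of $\operatorname{Frac}(K[\mathbf{x}^{(\infty)}]/P)$ over $K$; in particular the image $\bar t$ of $t$ is algebraic over the subfield generated by the images of $\mathbf{x}^{(< \mathbf{h})}$, which I identify with $L$. Let $m(t) \in L[t]$ be the minimal polynomial of $\bar t$, so that the kernel of the evaluation map $L[t] \to \operatorname{Frac}(K[\mathbf{x}^{(\infty)}]/P)$, $t \mapsto \bar t$, equals $(m)$. Clearing denominators and dividing out the content, I obtain a primitive polynomial $g \in R$ that is an $L$-multiple of $m$, hence irreducible in $L[t]$; by Gauss's lemma $g$ is then irreducible, and so prime, in the UFD $R$. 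Since $g$ is an $L$-multiple of $m$ we have $g(\bar t) = 0$, i.e. $g \in P$, and of course $g \in R$, so $g \in P \cap R$.

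Next I would show $P \cap R = (g)$. The inclusion $(g) \subseteq P \cap R$ is immediate. For the reverse, take any $f \in P \cap R$. Then $f(\bar t) = 0$, so $f \in (m) = (g)$ inside $L[t]$, and therefore $g \mid f$ in $L[t]$, say $f = g q$ with $q \in L[t]$. The only delicate point is that $q$ in fact lies in $R$: writing $q = c\, q_1$ with $c \in L^\times$ and $q_1 \in R$ primitive, the product $g q_1$ is primitive by Gauss's lemma; expressing $c = a/b$ with $a,b \in D$ coprime and comparing contents in $b f = a\, g q_1$ yields $b \cdot \operatorname{cont}(f) = a$ up to a unit, whence $b \mid a$ and, by coprimality, $b$ is a unit. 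Thus $c \in D$, so $q = c q_1 \in R$ and $f = g q \in (g)$. This proves $P \cap R = (g)$ is principal.

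I expect the content bookkeeping in the last paragraph to be the main obstacle: one must guarantee that dividing by $g$ in the fraction field $L[t]$ introduces no denominators, and this is exactly where primitivity of $g$ and multiplicativity of content over the UFD $D$ enter. Everything else—the existence of the minimal polynomial, the passage from $m$ to a primitive $g$, and the primeness of $g$—is routine once $R = D[t]$ is recognized as a UFD. One should only note that, even when $D$ involves infinitely many variables, each polynomial appearing in the argument uses finitely many of them, so all content and factorization arguments apply without change.
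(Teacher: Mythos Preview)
Your proof is correct and takes essentially the same approach as the paper's: both identify $R = D[t]$ with $D = K[\mathbf{x}^{(<\mathbf{h})}]$ a UFD, pick an irreducible element of $P\cap R$ of minimal $t$-degree (the paper directly, you via the minimal polynomial over $L=\operatorname{Frac}(D)$ cleared to a primitive polynomial), and then use Gauss's lemma to pass from divisibility over $L[t]$ to divisibility in $R$. The paper phrases the last step as pseudo-division plus ``irreducible $\Rightarrow$ prime'' in a UFD, whereas you spell out the content argument explicitly; these are two standard packagings of the same idea.
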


\begin{proof}
    Fix $i$ such that $h_i \neq \infty$.
    Let $f \in P \cap K[\mathbf{x}^{(< \mathbf{h} + \mathbf{e}_i)}]$ be an irreducible polynomial of minimal degree in $x_i^{(h_i)}$.
    Since the leading coefficient of $f$ does not belong to $P$, for every other polynomial $g \in P \cap K[\mathbf{x}^{(< \mathbf{h} + \mathbf{e}_i)}]$, the result of pseudo-division of $g$ by $f$ with respect to $x_i^{(h_i)}$ belongs to $P$.
    The minimality of $\deg_{x_i^{(h_i)}} f$ implies that this result is zero, so $g$ is pseudo-divisible by $f$ with respect to $x_i^{(h_i)}$.
    Then the Gauss lemma implies that $g$ is divisible by $f$.
\end{proof}

\begin{definition}
  For a prime differential ideal $P \subset K[\mathbf{x}^{(\infty)}]$ (where $\mathbf{x} = (x_1, \ldots, x_n)$) and its parametric profile $\mathbf{h}$, a tuple of differential polynomials $(f_1, \ldots, f_n)$ is called \emph{the projections corresponding to $\mathbf{h}$} if, for every $1 \leqslant i \leqslant n$,
  \begin{enumerate}
      \item if $h_i = \infty$, $f_i = 0$;
      \item if $h_i < \infty$, $f_i$ is the generator of $P \cap K[\mathbf{x}^{(< \mathbf{h} + \mathbf{e}_i)}]$.
  \end{enumerate}
\end{definition}

The following lemma describes how one can use the projections for differential elimination.

\begin{lemma}[Projections under projection]\label{lem:proj_proj}
  Let $(f_1, \ldots, f_n)$ be the projections of a prime differential ideal $P \subset K[\mathbf{x}^{(\infty)}]$ with respect to its profile $\mathbf{h}$.
  Assume that, for some $m \leqslant n$, differential polynomials $f_1, \ldots, f_m$ do not involve $x_{m + 1}, \ldots, x_n$ and their derivatives.
  
  Then $(h_1, \ldots, h_m)$ is a parametric profile of $P \cap K[x_1^{(\infty)}, \ldots, x_m^{(\infty)}]$, and the corresponding projections are $(f_1, \ldots, f_m)$.
\end{lemma}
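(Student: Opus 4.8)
The plan is to write $Q := P \cap K[x_1^{(\infty)}, \ldots, x_m^{(\infty)}]$ and to verify the two assertions in turn. First I would record that $Q$ is again a prime differential ideal: contraction preserves primality, and since $K[x_1^{(\infty)}, \ldots, x_m^{(\infty)}]$ is a differential subring of $K[\mathbf{x}^{(\infty)}]$, the intersection $Q$ is closed under the derivation. Hence the notions of parametric profile and projections apply to $Q$. Writing $A := K[\mathbf{x}^{(\infty)}]/P$ and $B := K[x_1^{(\infty)}, \ldots, x_m^{(\infty)}]/Q$, the inclusion of rings induces an embedding $B \hookrightarrow A$ of domains and thus an inclusion of fraction fields $F_B \subseteq F_A$. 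To prove that $(h_1, \ldots, h_m)$ is a parametric profile of $Q$, I must show that the images of $\{x_j^{(i)} : 1 \leqslant j \leqslant m,\ 0 \leqslant i < h_j\}$ form a transcendence basis of $F_B$ over $K$. Algebraic independence is immediate, as this set is a subset of the transcendence basis $\mathbf{x}^{(<\mathbf{h})}$ of $F_A$. The substantial point --- and the main obstacle --- is to show that $F_B$ is algebraic over $L := K\big(x_j^{(i)} : 1 \leqslant j \leqslant m,\ 0 \leqslant i < h_j\big)$.

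Here I would use the hypothesis decisively. For each $k \leqslant m$ with $h_k < \infty$, the projection $f_k$ generates $P \cap K[\mathbf{x}^{(<\mathbf{h}+\mathbf{e}_k)}]$ and, by assumption, involves only $x_1, \ldots, x_m$; therefore $f_k \in Q$ and $f_k \in K[x_1^{(<h_1)}, \ldots, x_m^{(<h_m)}, x_k^{(h_k)}]$. It is nonzero (since $x_k^{(h_k)}$ is a non-base variable, the contraction is a nonzero ideal) and has positive degree in $x_k^{(h_k)}$, for otherwise it would be a nonzero element of $P \cap K[\mathbf{x}^{(<\mathbf{h})}] = 0$. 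Its leading coefficient in $x_k^{(h_k)}$ is a polynomial in the algebraically independent base variables, hence nonzero modulo $Q$, so the relation $f_k = 0$ exhibits $x_k^{(h_k)}$ as algebraic over $L$.

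To pass from $x_k^{(h_k)}$ to all higher derivatives I would show that the relative algebraic closure $\overline{L}$ of $L$ in $F_B$ is a differential subfield. The derivative of each generator of $L$ lies in $\overline{L}$, since $(x_j^{(i)})' = x_j^{(i+1)}$ is either a generator of $L$ (when $i+1 < h_j$) or equals $x_j^{(h_j)} \in \overline{L}$ (when $i+1 = h_j$); the product and quotient rules then put the derivative of every element of $L$ in $\overline{L}$. For $a \in \overline{L}$ with minimal polynomial $\sum_i c_i X^i$ over $L$, differentiating the relation $\sum_i c_i a^i = 0$ expresses $a'$ as a ratio whose numerator $\sum_i c_i' a^i$ and denominator $\sum_i i\, c_i a^{i-1}$ both lie in $\overline{L}$, the denominator being nonzero by separability in characteristic $0$; hence $a' \in \overline{L}$. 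Thus $\overline{L}$ is a differential field containing each $x_j$ for $j \leqslant m$ (directly when $h_j \geqslant 1$, and via $f_j$ when $h_j = 0$). Since $F_B$ is generated over $K$ as a differential field by $x_1, \ldots, x_m$, this forces $\overline{L} = F_B$, completing the first assertion; the case $h_j = \infty$ is trivial because then all derivatives of $x_j$ already lie in $L$.

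For the second assertion I would compute the relevant contraction directly. When $h_k = \infty$ both projections are $0$ by definition. When $h_k < \infty$, put $R_{m,k} := K[x_1^{(<h_1)}, \ldots, x_m^{(<h_m)}, x_k^{(h_k)}]$, which is exactly $K[\mathbf{x}^{(<\mathbf{h}+\mathbf{e}_k)}] \cap K[x_1^{(\infty)}, \ldots, x_m^{(\infty)}]$ and is the polynomial ring defining the index-$k$ projection of $Q$. Then
\[
  Q \cap R_{m,k} = P \cap R_{m,k} = \big(P \cap K[\mathbf{x}^{(<\mathbf{h}+\mathbf{e}_k)}]\big) \cap R_{m,k} = \langle f_k \rangle \cap R_{m,k}.
\]
Since $K[\mathbf{x}^{(<\mathbf{h}+\mathbf{e}_k)}]$ is a polynomial ring over $R_{m,k}$ in the variables $\{x_l^{(i)} : m < l \leqslant n,\ 0 \leqslant i < h_l\}$ and $f_k \in R_{m,k}$, the elementary fact that $f_k\,T[\mathbf{z}] \cap T = f_k\,T$ for any polynomial extension $T \subseteq T[\mathbf{z}]$ gives $Q \cap R_{m,k} = \langle f_k \rangle_{R_{m,k}}$. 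Thus the generator of this principal ideal is $f_k$, which is precisely the projection of $Q$ for index $k$. The main difficulty throughout is the spanning half of the first assertion, and the device that resolves it is the closure of $\overline{L}$ under the derivation, which reduces the infinite family of required relations to a finite check on the generators of $L$ together with separability in characteristic $0$.
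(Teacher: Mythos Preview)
Your proof is correct and follows the same approach as the paper's: algebraic independence via the subset argument, spanning via the relations $f_k$, and the identification of the projections by contracting the principal ideals. Your argument is considerably more detailed than the paper's brief sketch---in particular, you make explicit the step that the relative algebraic closure $\overline{L}$ is closed under the derivation (which is what propagates algebraicity from $x_k^{(h_k)}$ to all higher derivatives), and you spell out the contraction $\langle f_k\rangle \cap R_{m,k} = \langle f_k\rangle_{R_{m,k}}$, both of which the paper leaves to the reader.
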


\begin{proof}
  Since $x_1^{(<h_1)}, \ldots, x_m^{(<h_m)}$ are algebraically independent modulo $P$, they are algebraically independent modulo  $P \cap K[x_1^{(\infty)}, \ldots, x_m^{(\infty)}]$ as well.
  Moreover, they form a transcendence basis of $K[x_1^{(<h_1)}, \ldots, x_m^{(<h_m)}] / P \cap K[x_1^{(\infty)}, \ldots, x_m^{(\infty)}]$ because, for every $1 \leqslant i \leqslant m$, $f_i$ involves only $x_1^{(<h_1)}, \ldots, x_m^{(<h_m)}, x_i^{(h_i)}$ and thus provides an algebraic dependence between their images.
  Therefore, $(h_1, \ldots, h_m)$ is indeed a parametric profile for $P \cap K[x_1^{(\infty)}, \ldots, x_m^{(\infty)}]$. 
  Then one can see that $(f_1, \ldots, f_m)$ are the corresponding projections.
\end{proof}

\begin{remark}[Projections for ODE models]\label{rem:proj_ode}
  For an ideal $I_\Sigma \subset \C(\bm{\mu})[\mathbf{x}^{(\infty)}, \mathbf{y}^{(\infty)}, \mathbf{u}^{(\infty)}]$, corresponding to a system $\Sigma$ of the form~\eqref{eq:ODEmodel}, we have the following parametric profile (see~\cite[Lemma~3.1]{HOPY2020}):
  \begin{equation}\label{eq:profile_orig}
    (\underbrace{1, 1, \ldots, 1}_{n \text{ times}}, \underbrace{0, 0, \ldots, 0}_{m \text{ times}}, \underbrace{\infty, \ldots, \infty}_{s \text{ times}}).
  \end{equation}
  Furthermore, the corresponding projections are
  \[
  (Qx_1' - F_1, \ldots, Qx_n' - F_n, Qy_1 - G_1, \ldots, Qy_m - G_m, \underbrace{0, \ldots, 0}_{s \text{ times}}).
  \]
  By Lemma~\ref{lem:proj_proj}, one can compute the projections of the ideal of input-output relations, that is $I_\Sigma \cap \C (\bm{\mu})[\mathbf{y}^{(\infty)}, \mathbf{u}^{(\infty)}]$, by ``moving'' as many of the first $n$ ones in~\eqref{eq:profile_orig} as possible to the $y$-variables (see Figure~\ref{tab:yound_oscillator} and Example~\ref{ex:proj}).
\end{remark}

\begin{example}\label{ex:proj}
  Consider system $\Sigma$ in variables $x_1, x_2, y$ which is a harmonic oscillator with one of the coordinates being observed:
  \begin{equation}\label{eq:harm}
    x_1' = -x_2, \quad x_2' = x_1, \quad y = x_1.
  \end{equation}
  As explained in Remark~\ref{rem:proj_ode}, \eqref{eq:harm} are the projections corresponding to profile $(1, 1, 0)$.
  Since a general solution of~\eqref{eq:harm} is of the form $y = c_1 e^{it} + c_2 e^{-it}$ for arbitrary $c_1, c_2 \in \C$ and every element of $I_\Sigma$ must vanish on every solution of~\eqref{eq:harm}, there is no nonzero element in $I_\Sigma$ depending only on $y$ and $y'$, so they are algebraically independent.
  Since, modulo $I_\Sigma$, any polynomial can be reduced to a polynomial in $x_1$ and $x_2$, the transcendence degree modulo $I_\Sigma$ is 2, so $y$ and $y'$ form a transcendence basis modulo $I_\Sigma$.
  This yields a different profile $(0, 0, 2)$ for the same ideal.
  Now we compute the corresponding projections:
  \begin{itemize}
      \item the projection relating $x_1, y, y'$ is just one of~\eqref{eq:harm}: $x_1 - y = 0$;
      \item the projection relating $x_2, y, y'$ can be obtained from the previous one by differentiation: $x_2 - y' = 0$;
      \item the projection relating $y, y, y''$ is the classical equation for the harmonic oscillator: $y'' + y = 0$.
  \end{itemize}
  By Lemma~\ref{lem:proj_proj}, $y'' + y = 0$ is the projection of the ideal of input-output relations.
\end{example}

\begin{remark}[Extra components in the projection-based representation]\label{rem:extra}
    Let $(f_1, \ldots, f_n)$ be the projections of a prime differential ideal $P \subset K[\mathbf{x}^{(\infty)}]$ corresponding to a profile $\mathbf{h}$.
    Similar to~\eqref{eq:Isigma_gens}, one can define
    \begin{equation}\label{eq:Ih}
      I_{\mathbf{h}} := \langle f_1, \ldots, f_n\rangle^{(\infty)} \colon (H \cdot S)^\infty,
    \end{equation}
    where $H$ and $S$ are, respectively, the products, over all $i$'s with $f_i \neq 0$, of the leading coefficients and the derivatives of $f_i$ with respect to $x_i^{(h_i)}$.
    Since $P$ is a prime containing $I_{\mathbf{h}}$ and has the same transcendence basis as all associated primes of $I_{\mathbf{h}}$ by~\cite[Theorem 4.4]{Hubert2003}, $P$ is a prime component of $I_{\mathbf{h}}$.
    However, in general, $I_{\mathbf{h}}$ may have additional prime components as the following example shows.
    Although the example may look involved, it is an ODE version of the geometric fact that an intersection of two circular cylinders of the same radius and with intersecting axes consists of two components (ellipses).
    
    Consider an ODE system $\Sigma$ of the form~\eqref{eq:ODEmodel}:
    \[
        x_1' = \frac{1}{2}(1 + x_1^2),\quad 
        x_2' = \frac{1 - x_1^2}{1 + x_1^2},\quad
        y_1 = \frac{2x_1}{\mu(1 + x_1^2)},\quad
        y_2 = x_2.
    \]
    The corresponding ideal $I_\Sigma$ is prime, and one can show (using Algorithm~\ref{alg:socoban}) that it has a parametric profile $\mathbf{h} := (0, 0, 1, 1)$, and the corresponding projections are:
    \[
      \mu y_1 (\underline{x_1}^2 + 1) - 2 \underline{x_1} = 0, \quad \underline{x_2} - y_1 = 0, \quad \mu^2 (\underline{y_1'})^2 + \mu^2 y_1^2 - 1 = 0, \quad  (\underline{y_2'})^2 + \mu^2 y_1^2 - 1 = 0,
    \]
     where the variables of the form $x_i^{(h_i)}$ are underlined.
    By computing the difference of the last two equations, we have
    \begin{equation}\label{eq:factors}
    (\mu^2 (y_1')^2 + \mu^2 y_1^2 - 1) - ((y_2')^2 + \mu^2 y_1^2 - 1) = \mu^2 (y_1')^2 - (y_2')^2 = (\mu y_1' - y_2') (\mu y_1' + y_2') \in I_{\mathbf{h}}.
    \end{equation}
    One can check that neither of $\mu y_1' - y_2'$ and $\mu y_1' + y_2'$ belongs to $I_{\mathbf{h}}$, so it is not prime.
    Because of this phenomenon, the projections will be typically used in combination with a weak membership test for the ideal (see Section~\ref{sec:membership} for details) to distinguish between the factors as in~\eqref{eq:factors}.
    
    Furthermore, the coefficients of the projections $\mu^2 (y_1')^2 + \mu^2 y_1^2 - 1$ and $(y_2')^2 + \mu^2 y_1^2 - 1$ generate $\C(\mu^2)$ while the field of definition of the ideal of input-output relations (and, thus, by~\cite[Theorem~21]{allident}, the field of ME-identifiable functions) contains $\mu$ since $\mu y_1' - y_2' \in I_\Sigma$.
    We overcome this difficulty by designing Algorithm~\ref{alg:field_def} for computing the field of definition for a given set of projections together with a weak membership test.
\end{remark}

\begin{remark}[Projections vs. characteristic sets]\label{rem:charsets}
    Perhaps the most popular way of representing differential ideals is via characteristic sets~\cite[Section 3.3]{Hubert2003b}.
    One can check that, by~\cite[Definition~3.17]{Hubert2003b}, the set of nonzero projections $\{f_i \mid i = 1, \ldots, n, f_i \neq 0\}$ for a profile $\mathbf{h}$ of a prime differential ideal $P$ form a characteristic set of $I_{\mathbf{h}}$~\eqref{eq:Ih} with respect to the following ranking:
    \begin{equation}\label{eq:ranking}
      x_i^{(a)} > x_j^{(b)} \iff (a - h_i > b - h_j) \text{ or } (a - h_i = b - h_j \text{ and } i > j).
    \end{equation}
    However, as we have shown in Remark~\ref{rem:extra}, $I_{\mathbf{h}}$ may be not equal to $P$.
    Nevertheless, if the algebraic ideal
    \[
        \langle f_1, \ldots, f_n \rangle \colon H^\infty
    \]
    has a unique component of the top dimension, then it is prime by~\cite[Theorem~4.4]{Hubert2003}, so $I_{\mathbf{h}}$ is prime by~\cite[Theorem~4.13]{Hubert2003b} implying that $I_{\mathbf{h}} = P$.
    In this case $\{f_i \mid i = 1, \ldots, n, f_i \neq 0\}$ is characteristic set for $P$ with respect to the ranking~\eqref{eq:ranking}.
    We give and implement an algorithm for checking this (Algorithm~\ref{alg:high_dim_comp}) and it turns out that this property holds for the ideals of input-output relations for the majority of systems we have considered, see Table~\ref{tab:our_performance} (column ``Char. set?'').
    The prevalence of input-output projections which are characteristic sets makes the comparison of our algorithm with the methods based on characteristic sets in Table~\ref{tab:comparison_elimination} even more meaningful.
\end{remark}

\begin{remark}[Complexity]
  The geometric definition via the projections allows one to expect to obtain useful geometry-based bounds for the projection-based representation and the algorithms presented in this paper.
  This interesting problem is beyond the scope of the present paper.
\end{remark}


\section{Main theoretical results}\label{sec:main}

In this section, we collect the main theoretical results of the paper.
For implementation and performance of the corresponding algorithms, see Section~\ref{sec:performance}.

\subsection{Computing projections of ODE systems}

\begin{theorem}[Model $\implies$ Projections]\label{thm:project_ode}
   Consider an ODE system $\Sigma$ as in~\eqref{eq:ODEmodel} and the corresponding differential ideal $I_{\Sigma}$ (see~\eqref{eq:Isigma_gens}).
   Then Algorithm~\ref{alg:project_ode} computes a parametric profile and the corresponding projections for the ideal of input-output relations, that is, $I_{\Sigma} \cap \C(\bm{\mu})[\mathbf{y}^{(\infty)}, \mathbf{u}^{(\infty)}]$.
\end{theorem}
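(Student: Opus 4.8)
The plan is to prove correctness through a loop invariant. At the start of every iteration of Algorithm~\ref{alg:project_ode}, the data maintained by the algorithm is a genuine parametric profile $\mathbf{h}$ on the full variable set $(\mathbf{x}, \mathbf{y}, \mathbf{u})$ for the prime differential ideal $I_\Sigma$ of~\eqref{eq:Isigma_gens}, together with its corresponding projections. The base case is supplied by Remark~\ref{rem:proj_ode}: the profile~\eqref{eq:profile_orig} and the projections $(Qx_i' - F_i, Qy_j - G_j, 0, \ldots, 0)$ form such a pair, the primality and transcendence-basis claims being \cite[Lemma~3.1]{HOPY2020}. The whole proof then reduces to (i) showing that one iteration preserves this invariant, (ii) bounding the number of iterations, and (iii) reading off the conclusion from the terminal state via Lemma~\ref{lem:proj_proj}.

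For the inductive step I would analyze a single \emph{swap}, the operation illustrated in Example~\ref{ex:toy_example} and Figure~\ref{tab:yound_oscillator}. Suppose the projection $f$ attached to an output $y_j$ (with current profile entry $h_{y_j}$) actually involves a state variable $x_i$ lying in the base. The swap passes to the profile obtained by decrementing the $x_i$-entry and incrementing the $y_j$-entry; concretely it evicts $x_i$ from the base and inserts $y_j^{(h_{y_j})}$. Three things must be checked. First, the new base is still a transcendence basis modulo $I_\Sigma$: this is the exchange property for transcendence bases applied to the single relation $f \in I_\Sigma$, which is nonconstant in $x_i$, so $x_i$ may be exchanged for $y_j^{(h_{y_j})}$. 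Second, the derivative-closure condition on a profile is preserved because we always evict the highest state derivative present and always append the next output derivative. Third, the projections must be recomputed: $f$ itself becomes the projection of the now-leading variable $x_i$, whereas the new projection of $y_j$, relating $y_j^{(h_{y_j}+1)}$ to the new base, is obtained by differentiating $f$ and then eliminating the non-base variables it introduces (derivatives of the evicted state and any leading output derivatives) via resultants. By Lemma~\ref{lem:id_principal} the relevant intersection ideals are principal, so each projection is determined up to a unit, and the correct irreducible generator is singled out using the membership test of Section~\ref{sec:membership}; this is precisely what controls the extraneous factors introduced by the resultant.

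Termination and the final step are then short. Since the algorithm only ever removes state derivatives from the base and never inserts one, the state profile entries never exceed their initial value $1$; in particular every state variable occurring in an output projection occurs at order $0$, which is its top base derivative and hence an admissible pivot. Consequently a swap is available precisely when some output projection still involves a state variable, so the halting condition of the algorithm coincides with the hypothesis of Lemma~\ref{lem:proj_proj}. Moreover each swap strictly decreases the number of state derivatives in the base, a quantity equal to $n$ initially, so at most $n$ swaps occur. At termination no output projection involves a state variable, and the input projections are identically zero, hence trivially state-free. Reordering the variables so that $(\mathbf{y}, \mathbf{u})$ come first and invoking Lemma~\ref{lem:proj_proj} shows that the restriction of the terminal profile and projections to $(\mathbf{y}, \mathbf{u})$ is a parametric profile with projections for $I_\Sigma \cap \C(\bm{\mu})[\mathbf{y}^{(\infty)}, \mathbf{u}^{(\infty)}]$, which is the asserted output.

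The main obstacle is the inductive step, and within it the recomputation of the projections rather than the transcendence-basis bookkeeping, which is clean. The delicate point is to show that the differentiate-and-eliminate procedure returns exactly the principal generator of the appropriate intersection ideal $I_\Sigma \cap \C(\bm{\mu})[(\mathbf{x},\mathbf{y},\mathbf{u})^{(<\mathbf{h}')}]$ for the updated profile $\mathbf{h}'$, and not some multiple carrying the spurious factors inherent to resultants. Establishing this rigorously requires combining the principality from Lemma~\ref{lem:id_principal} with the correctness of the membership test of Section~\ref{sec:membership}, so that among the factors of each resultant the unique one lying in $I_\Sigma$ is selected.
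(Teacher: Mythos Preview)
Your overall strategy---establish the invariant from Remark~\ref{rem:proj_ode}, preserve it through each swap, bound the number of swaps by $n$, then read off the conclusion via Lemma~\ref{lem:proj_proj}---is exactly the paper's. The inductive step is packaged there as Proposition~\ref{prop:socoban}, and your sketch of the swap is a fair outline of Algorithm~\ref{alg:socoban}. Two points need sharpening, one minor and one substantive.

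The minor one: your swap updates only two projections (the old output projection becomes the projection for the evicted state, and a fresh output projection is built by differentiate-and-eliminate). But every \emph{other} projection may still involve the evicted state variable and must be cleaned up by a resultant against the new state projection; this is the loop in~\ref{step:elim_xj} of Algorithm~\ref{alg:socoban}, and without it the invariant already fails at the next iteration.

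The substantive one: you place the technical burden on selecting the correct irreducible factor of each resultant, but that part is actually routine. Once a resultant is nonzero, it lies in $P$ and in the target truncated ring; Lemma~\ref{lem:id_principal} makes that intersection principal, primality of $P$ forces exactly one irreducible factor to lie in $P$, and the oracle of Section~\ref{sec:membership} picks it out. The genuine difficulty---which your proposal does not mention---is showing that the chain of resultants never collapses to zero: one differentiates $f_i$, then takes resultants against every $f_\ell$ with $\ell \ne i$, and finally a further resultant against $f_i$ itself, and it is not a priori clear that none of these vanish. This nonvanishing is precisely what Lemma~\ref{lem:repres} establishes via careful tracking of leading coefficients, and it is the technical heart of the proof of Proposition~\ref{prop:socoban}.
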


As we have described in Section~\ref{sec:identifiability_into}, the field of ME-identifiable functions is equal to the field of definition of $I_{\Sigma} \cap \C(\bm{\mu})[\mathbf{y}^{(\infty)}, \mathbf{u}^{(\infty)}]$.
The next theorem shows that one can compute generators of this field from the input-output projections.

\begin{theorem}[Projections $\implies$ Field of definition]\label{thm:field_def_ode}
   Consider an ODE system $\Sigma$ as in~\eqref{eq:ODEmodel} and the corresponding differential ideal $I_{\Sigma}$ (see~\eqref{eq:Isigma_gens}).
   Let $\mathbf{h}$ be a parametric profile for $I_{\Sigma} \cap \C(\bm{\mu})[\mathbf{y}^{(\infty)}, \mathbf{u}^{(\infty)}]$ with projections $(f_1, \ldots, f_n)$ such that each nonzero $f_i$ has at least one coefficient that is one.
   
   Then Algorithm~\ref{alg:field_def} computes a differential polynomial $f \in I_\Sigma$ with at least one coefficient (as a polynomial in $\{\mathbf{y}^{(\infty)}, \mathbf{u}^{(\infty)}\}$) being equal to one, so that the field of ME-identifiable functions for $\Sigma$ is generated by the coefficients of $f, f_1, \ldots, f_n$.
\end{theorem}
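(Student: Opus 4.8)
The plan is to reduce the statement to the computation of a field of definition and then to prove the asserted equality of fields by a double inclusion. By \cite[Theorem~21]{allident} (recalled above), the field $F$ of ME-identifiable functions of $\Sigma$ is generated over $\C$ by the field of definition of the prime differential ideal $P := I_\Sigma \cap \C(\bm{\mu})[\mathbf{y}^{(\infty)}, \mathbf{u}^{(\infty)}]$, so it suffices to prove that $F$ is generated over $\C$ by the coefficients of $f, f_1, \dots, f_n$. Passing to a universal differential extension of $\C(\bm{\mu})$, I would use the standard description of $F$ as the fixed field of the group $G$ of $\C$-automorphisms $\sigma$ satisfying $\sigma(P) = P$; since the $\mathbf{y}$- and $\mathbf{u}$-variables are indeterminates fixed by every such $\sigma$, the group $G$ acts only on the coefficients of differential polynomials.

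\emph{Easy inclusion.} First I would show that the coefficients of the projections $f_1, \dots, f_n$ and of the output $f$ all lie in $F$. By Lemma~\ref{lem:id_principal}, each nonzero $f_i$ is, up to a scalar, the generator of the principal ideal $P \cap \C(\bm{\mu})[(\mathbf{y},\mathbf{u})^{(<\mathbf{h}+\mathbf{e}_i)}]$; normalizing so that a chosen coefficient equals $1$ makes this generator unique. For $\sigma \in G$ we have $\sigma(P) = P$, so $\sigma(f_i)$ is again a normalized generator of the same principal ideal, whence $\sigma(f_i) = f_i$ and every coefficient of $f_i$ is fixed by $G$, i.e. lies in $F$. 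The same reasoning applies to $f$, which the algorithm returns as a normalized canonical factor contained in $P$ (and hence in $I_\Sigma$, as required), giving that its coefficients lie in $F$ as well. Consequently the field $F_1$ generated over $\C$ by the coefficients of $f, f_1, \dots, f_n$ satisfies $F_1 \subseteq F$.

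\emph{Reverse inclusion.} For $F \subseteq F_1$ I would prove that $P$ is already defined over $F_1$; minimality of the field of definition then forces $F \subseteq F_1$, and therefore $F = F_1$. Let $F_0 \subseteq F_1$ be the field generated over $\C$ by the coefficients of $f_1, \dots, f_n$; since the saturands $H$ and $S$ in \eqref{eq:Ih} are built from the $f_i$, the ideal $I_{\mathbf{h}}$ is defined over $F_0$, and by Remark~\ref{rem:extra} the only obstruction to $F = F_0$ is that $P$ may be a proper component of $I_{\mathbf{h}}$. The extension $F/F_0$ is finite, and the purpose of $f$ is to single out $P$ among the finitely many components of $I_{\mathbf{h}}$ defined over $F_0$: the algorithm forms a generic $F_0$-linear combination $g$ of derivatives of $f_1, \dots, f_n$, so that $g \in I_{\mathbf{h}}$ has coefficients in $F_0$, factors $g$, and uses the weak membership test of Section~\ref{sec:membership} to collect into $f$ exactly the irreducible factors of $g$ that lie in $P$. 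For a generic $g$ the resulting $f$ vanishes on $P$ but on none of the extra components, so adjoining $f$ to $f_1, \dots, f_n$ and saturating cuts $I_{\mathbf{h}}$ down to $P$. As all of $f_1, \dots, f_n, f$ then have coefficients in $F_1$, this exhibits $P$ as defined over $F_1$.

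\emph{Main obstacle.} I expect the genericity argument in the last step to be the crux. One must show that a single generic combination $g$ produces, after factoring and membership filtering, a polynomial $f$ that is simultaneously (i) moved off $P$ by every $\sigma \in G$ with $\sigma(P) \neq P$, so that its coefficient field realizes the whole extension $F/F_0$, and (ii) sufficient to eliminate every extra component of $I_{\mathbf{h}}$. I would handle (i) by a primitive-element-style argument showing that the bad choices of the combination coefficients form a proper Zariski-closed set, and (ii) by a dimension count: outside that closed set the extra components fail the equation $f = 0$ and are removed by the saturation, leaving $P$ as the unique remaining component, so that, as in Remark~\ref{rem:charsets}, the saturated ideal is prime and equal to $P$. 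Correctness of the identification of the $P$-factors rests on the randomized power-series membership test already available for $I_\Sigma$.
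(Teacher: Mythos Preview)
Your easy inclusion is fine and essentially matches the paper's use of Lemma~\ref{lem:field_def_basic}(1,2). The reverse inclusion, however, rests on a misreading of Algorithm~\ref{alg:field_def}, and the mechanism you propose does not work.

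You describe $g$ as a generic $F_0$-linear combination of derivatives of $f_1,\dots,f_n$. Any such $g$ lies in $\langle f_1,\dots,f_n\rangle^{(\infty)}\subset I_{\mathbf{h}}$, hence in \emph{every} prime component of $I_{\mathbf{h}}$; factoring $g$ and keeping the factors that lie in $P$ cannot produce an element that vanishes on $P$ but not on the extra components. Generically $g$ will even be irreducible over $F_0$, so the ``factor in $P$'' is $g$ itself and nothing is separated. The cutting-down argument therefore collapses, and with it your justification that the coefficients of $f$ lie in $F$ (your Galois argument for $f$ needed $f$ to be a canonical element determined by $P$, which is not the case for an arbitrary product of factors of such a $g$).

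What the algorithm actually does is different in kind. It samples a $\Q$-linear form $z^*=a_1x_1^{(h_1)}+\cdots+a_{n_0}x_{n_0}^{(h_{n_0})}$ in the \emph{leaders} (not a combination of the equations), and computes, via iterated resultants with the $f_j$ followed by back-substitution, factoring over $\Q$, and oracle selection, the generator $\tilde f$ of the principal ideal $P\cap\Q(\bm{\mu})[\mathbf{x}^{(<\mathbf{h})},z^*]$; this is what makes the easy inclusion for $\tilde f$ go through. The argument then passes to the finite truncation $P_0:=P\cap\Q[\mathbf{p},\mathbf{x}^{(\leqslant\mathbf{h}^\circ)}]$ via Lemmas~\ref{lem:diff_ideal_trunc} and~\ref{lem:field_def_diff}, a reduction your sketch omits. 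In that polynomial ring, $\tilde f$ is the defining polynomial of the image of $P_0$ under the random projection $\pi_{\mathbf a}$, and Lemma~\ref{lem:proj_distinct} shows that for generic $\mathbf a$ the top-dimensional components of $\langle f_1,\dots,f_{n_0}\rangle$ have \emph{distinct} such images, so $\tilde f$ does not vanish on the other components. Crucially, correctness is not left to a genericity claim: the algorithm returns only after Algorithm~\ref{alg:high_dim_comp} \emph{certifies} that $\langle f_1,\dots,f_{n_0},\tilde f\rangle$ has a unique top-dimensional component. The inclusion $F\subset F_1$ then follows from Lemma~\ref{lem:field_def_basic}(3).
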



\subsection{Assessing identifiability}

Theorems~\ref{thm:project_ode} and~\ref{thm:field_def_ode} provide a way to compute a set of generators of the field of multi-experiment identifiable functions for a model~$\Sigma$.
However, these generators may be complicated and hard to interpret, which is what typically happens in practice.
For any given function of parameters, the following theorem allows to find with high probability of correctness whether the function belongs to the field of multi-experiment identifiable functions.

\begin{theorem}[Field of definition $\implies$ ME-identifiability]\label{thm:sampling}
   Let $f, f_1, \ldots, f_N, g \in \C[\mathbf{X}]$, where $\mathbf{X} = (X_1, \ldots, X_n)$.
   Let $0 < p < 1$ be a real number.
   We define 
   \[
     d := \max(\deg g + 1, \deg f, \deg f_1, \ldots, \deg f_N) \quad \text{ and }\quad M := \frac{6 d^{n + 3}}{1 - p}
   \]
   Let $\mathbf{a} = (a_1, \ldots, a_n)$ be integers sampled uniformly and independently from $[0, M]$.
   Consider the field $F := \C\left( \frac{f_1(\mathbf{X})}{g(\mathbf{X})}, \ldots, \frac{f_N(\mathbf{X})}{g(\mathbf{X})}  \right)$ and ideal
   \[
     I := \langle f_1(\mathbf{X}) g(\mathbf{a}) - f_1(\mathbf{a})g(\mathbf{X}), \ldots, f_N(\mathbf{X}) g(\mathbf{a}) - f_N(\mathbf{a})g(\mathbf{X}) \rangle \colon g(\mathbf{X})^{\infty} \subset \C[\mathbf{X}]
   \]
   Then
   \begin{enumerate}
       \item if $f(\mathbf{X})g(\mathbf{a}) - f(\mathbf{a})g(\mathbf{X}) \in I$, then $\frac{f(\mathbf{X})}{g(\mathbf{X})} \in F$ with probability at least $p$;
       \item if $f(\mathbf{X})g(\mathbf{a}) - f(\mathbf{a})g(\mathbf{X}) \not\in I$, then $\frac{f(\mathbf{X})}{g(\mathbf{X})} \not\in F$ with probability at least $p$.
   \end{enumerate}
\end{theorem}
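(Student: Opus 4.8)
The plan is to prove both statements by comparing the specialized ideal $I=I_{\mathbf a}$ with a \emph{generic fiber ideal} obtained by replacing the sample point $\mathbf a$ with a tuple of fresh indeterminates $\mathbf b=(b_1,\dots,b_n)$. Concretely, set $r_i:=f_i/g$ and $F_{\mathbf b}:=\C(r_1(\mathbf b),\dots,r_N(\mathbf b))$, and define
\[
 J:=\langle f_i(\mathbf X)g(\mathbf b)-f_i(\mathbf b)g(\mathbf X)\ :\ 1\le i\le N\rangle\colon g(\mathbf X)^\infty\ \subset\ \C(\mathbf b)[\mathbf X].
\]
Geometrically $V(J)$ is the fiber of the map $\mathbf X\mapsto(r_1,\dots,r_N)$ through the generic point $\mathbf b$, so $\dim V(J)=n-\operatorname{trdeg}_\C F$, and in characteristic zero this generic fiber is reduced, whence $J$ is radical. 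The structural fact I would establish first is the exact equivalence
\[
 \tfrac{f}{g}\in F \iff f(\mathbf X)g(\mathbf b)-f(\mathbf b)g(\mathbf X)\in J,
\]
both directions of which follow from interpreting membership as ``$r=f/g$ is constant, equal to its value $r(\mathbf b)$, along the generic fiber'': if $r=R(r_1,\dots,r_N)$ then $r$ is forced to equal $r(\mathbf b)$ wherever all $r_i=r_i(\mathbf b)$, and conversely constancy of $r$ on the reduced fiber through $\mathbf b$ places $r$ in the base field $F$.

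For statement (1) I assume $r\in F$ and show that sampled membership holds for most $\mathbf a$. By the equivalence, $f g(\mathbf b)-f(\mathbf b)g\in J$, so there is an exponent $N_0$ and cofactors $c_i\in\C(\mathbf b)[\mathbf X]$ with
\[
 g(\mathbf X)^{N_0}\bigl(f(\mathbf X)g(\mathbf b)-f(\mathbf b)g(\mathbf X)\bigr)=\sum_{i=1}^N c_i\,\bigl(f_i(\mathbf X)g(\mathbf b)-f_i(\mathbf b)g(\mathbf X)\bigr).
\]
Clearing the $\mathbf b$-denominators of the $c_i$ by a single $D(\mathbf b)\in\C[\mathbf b]$ turns this into a \emph{polynomial identity} in $\C[\mathbf X,\mathbf b]$, which may therefore be specialized $\mathbf b\mapsto\mathbf a$ for any $\mathbf a$. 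Whenever $D(\mathbf a)g(\mathbf a)\neq0$ the specialized identity certifies $f g(\mathbf a)-f(\mathbf a)g\in I_{\mathbf a}$, so the test errs only on the hypersurface $\{Dg=0\}$, and Schwartz--Zippel over the box $[0,M]$ bounds the error by $(\deg D+\deg g)/(M+1)$.

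For statement (2) I assume $r\notin F$ and show sampled \emph{non}-membership for most $\mathbf a$. Now $f g(\mathbf b)-f(\mathbf b)g\notin J=I(V(J))$, so $r$ is non-constant on the generic fiber. The goal is to produce a single nonzero $\Delta(\mathbf b)\in\C[\mathbf b]$ of controlled degree such that, whenever $\Delta(\mathbf a)g(\mathbf a)\neq0$, the specialized fiber $V(I_{\mathbf a})$ still has dimension $n-\operatorname{trdeg}_\C F$ and $r$ is still non-constant on it; then $f g(\mathbf a)-f(\mathbf a)g$ does not vanish identically on $V(I_{\mathbf a})$ and hence lies outside the radical ideal $I(V(I_{\mathbf a}))\supseteq I_{\mathbf a}$, giving non-membership. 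I would build $\Delta$ by slicing $V(J)$ with $n-\operatorname{trdeg}_\C F$ generic hyperplanes to isolate finitely many points, at least one of which witnesses $r\neq r(\mathbf b)$; the locus where this witness degenerates under specialization (points colliding with $\{g=0\}$, the slice dropping dimension, or the witness value equalizing) is cut out by a resultant whose coefficients are polynomials in $\mathbf b$, and $\Delta$ is its relevant nonzero factor. Again Schwartz--Zippel bounds the error by $(\deg\Delta+\deg g)/(M+1)$.

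The crux of the argument is the \textbf{explicit degree control} of $D$ and $\Delta$ in $\mathbf b$, since the stated $M=6d^{n+3}/(1-p)$ is precisely what forces each error probability below $1-p$: with $M+1\ge 6d^{n+3}/(1-p)$ one needs $\deg D+\deg g\le 6d^{n+3}$ and $\deg\Delta+\deg g\le 6d^{n+3}$. Here $d=\max(\deg g+1,\deg f,\deg f_i)$, the generic fiber is cut by polynomials of degree $\le d$ in $n$ variables, so by B\'ezout its degree is $\le d^{\,n}$; tracking the degrees in $\mathbf b$ through the membership certificate (effective ideal-membership/Nullstellensatz bounds) and through the slicing-and-resultant construction yields bounds of order $d^{\,n+O(1)}$, with the explicit exponent $n+3$ and constant $6$ emerging from this bookkeeping (the three extra factors of $d$ accounting for $f$, $g$, and the saturation step). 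I expect this effective B\'ezout accounting --- especially keeping the reverse-direction witness polynomial $\Delta$ within $6d^{n+3}$ while guaranteeing it is not identically zero --- to be the main obstacle; the field-theoretic equivalence and the Schwartz--Zippel step are comparatively routine.
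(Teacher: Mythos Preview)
Your high-level architecture --- establish the equivalence over the generic point $\mathbf b$, then specialize and invoke Schwartz--Zippel --- is sound and matches the paper's outer structure. But you yourself identify the crux and then leave it open: you write that you ``expect'' the effective B\'ezout/Nullstellensatz accounting to produce the bound $6d^{n+3}$, and that this is ``the main obstacle.'' It is, and your proposed tools do not obviously reach it. In your direction (1), clearing denominators from a membership certificate in $J\subset\C(\mathbf b)[\mathbf X]$ gives a polynomial $D(\mathbf b)$ whose degree is governed by effective ideal-membership bounds; these are notoriously large (Hermann-type bounds are doubly exponential, and even sharp bounds on cofactor degrees in $\mathbf X$ do not straightforwardly control degrees in the \emph{parameters} $\mathbf b$). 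Worse, a certificate built from all $N$ generators will typically make $\deg D$ depend on $N$, whereas the whole point of the stated $M$ is that $N$ does not appear. In your direction (2), the slicing-and-resultant construction of $\Delta$ is likewise only sketched, and iterated resultants inflate degrees multiplicatively.

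The paper achieves the explicit constant by a genuinely different mechanism that you do not invoke. First, it applies a \emph{geometric resolution} (primitive-element) step to replace $r_1,\dots,r_N$ by $\tilde r_1,\dots,\tilde r_N$ so that $\tilde r_1,\dots,\tilde r_\ell$ is a transcendence basis of $F$, $F=\C(\tilde r_1,\dots,\tilde r_{\ell+1})$, and the minimal relation $q$ among $\tilde r_1,\dots,\tilde r_{\ell+1}$ has $\deg q\le d^{\ell+1}$. This kills the dependence on $N$ outright. Second, instead of working with fibers in $\mathbf X$-space, it passes to the graph variety $X\subset\mathbb A^{n+\ell+2}$ of the map $\mathbf X\mapsto(\tilde r_1,\dots,\tilde r_{\ell+1},f/g)$, whose degree is at most $d^{\ell+2}$ by B\'ezout, and studies the projection $\pi_0$ of $X_0=\overline{\pi(X)}$ onto the $\mathbf y$-coordinates. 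A dedicated fiber-cardinality proposition (proved via triangular sets with degree $\le 2\deg X$) furnishes a single hypersurface $H$ of degree $\le 4\deg X$ outside which the specialized fiber has the same cardinality behaviour as the generic one; combined with an auxiliary polynomial $P_1$ of degree $\le\deg X$ (to avoid the non-dominant locus) and $q$, this yields a polynomial $Q$ with $\deg Q\le 5d^{\ell+2}+d^{\ell+1}$. Pulling $Q$ back through the rational map and applying Schwartz--Zippel then gives exactly the $6d^{n+3}/(1-p)$ bound.

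So the missing idea is not a matter of bookkeeping: it is the reduction to $\ell+1$ generators via geometric resolution, followed by degree control on the \emph{image} variety $X_0$ rather than on cofactors or resultants. Your certificate/resultant route, as stated, does not deliver the required bound and would reintroduce dependence on $N$.
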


\begin{remark}
\begin{itemize}
    \item[]
    \item We will apply Theorem~\ref{thm:sampling} to normalized coefficients of input-output equations (see Algorithm~\ref{alg:main}).
    Since there will be usually only a few resulting input-output equations, there will be typically few distinct denominators.
    Thus, the common denominator $g$ will not have excessively high degree.
    \item In the identifiability context, $N$ is usually a large number, so it is important that it does not appear in the formula for $M$, the size of the sampling range.
\end{itemize}
\end{remark}

\begin{remark}
  Note that it may be beneficial, instead of running the test from Theorem~\ref{thm:sampling} once, run it $\log_2 \frac{1}{1 - \varepsilon}$ times for probability $p = \frac{1}{2}$ to achieve overall probability of success $\varepsilon$.
\end{remark}

Finally, by combining Theorems~\ref{thm:project_ode}, \ref{thm:field_def_ode}, and~\ref{thm:sampling} with Algorithm~\ref{alg:wronsk} for checking the Wronskian condition of the equivalence of SE- and ME-identifiability, we obtain:

\begin{theorem}[Projection-based identifiability testing]\label{thm:ident}
   For any given ODE system $\Sigma$ as in~\eqref{eq:ODEmodel}, function of parameters $h(\bm{\mu}) \in \C(\bm{\mu})$, and a real number $0 < p < 1$, Algorithm~\ref{alg:main} returns the following information:
   \begin{itemize}
       \item whether $h(\bm{\mu})$ is globally, locally, or not ME-identifiable (correct with probability at least $p$);
       \item whether the algorithm was able to conclude that SE- and ME-identifiability coincide for $\Sigma$.
   \end{itemize}
\end{theorem}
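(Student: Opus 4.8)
The plan is to recognize that Algorithm~\ref{alg:main} is the composition of the three preceding constructive results, glued together by the field-theoretic characterization of ME-identifiability and followed by a single Wronskian post-processing step. All of the genuine mathematical content has already been isolated in Theorems~\ref{thm:project_ode}, \ref{thm:field_def_ode}, and~\ref{thm:sampling}, so the proof amounts to assembling the pieces, checking that the inputs and outputs match up, and honestly tracking the probability of error.

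First I would invoke \cite[Theorem~21]{allident}: the field of ME-identifiable functions of $\Sigma$ is generated over $\C$ by the field of definition $F$ of $I_\Sigma \cap \C(\bm{\mu})[\mathbf{y}^{(\infty)}, \mathbf{u}^{(\infty)}]$. Consequently $h(\bm{\mu})$ is globally ME-identifiable exactly when $h \in F$, and (by the local analogue of the same theory) locally ME-identifiable exactly when $h$ is algebraic over $F$; this converts the analytic question into two purely field-theoretic questions. To access $F$, I would run Algorithm~\ref{alg:project_ode}, which by Theorem~\ref{thm:project_ode} returns a parametric profile $\mathbf{h}$ together with the projections $(f_1, \ldots, f_n)$ of the input-output ideal. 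After normalizing each nonzero $f_i$ to have a coefficient equal to one, I would pass them to Algorithm~\ref{alg:field_def}, which by Theorem~\ref{thm:field_def_ode} produces a further polynomial $f \in I_\Sigma$ such that $F$ is generated over $\C$ by the coefficients of $f, f_1, \ldots, f_n$. Bringing these finitely many rational generators to a common denominator $g$ writes $F = \C(f_1'/g, \ldots, f_N'/g)$ in precisely the form required by Theorem~\ref{thm:sampling}.

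Next I would decide membership. Writing $h = f_0/g$ over the same denominator, the global test is a direct application of Theorem~\ref{thm:sampling}: for a random $\mathbf{a}$ drawn from $[0,M]^n$ with $M$ as prescribed there, the condition $f_0(\mathbf{X})g(\mathbf{a}) - f_0(\mathbf{a})g(\mathbf{X}) \in I$ certifies $h \in F$ and its failure certifies $h \notin F$, each with probability at least $p$. For the local test I would reuse the same random fiber: $I$ is the ideal of the fiber of the coefficient map over the image of $\mathbf{a}$, so $h$ is algebraic over $F$ iff the image of $h$ in $\C[\mathbf{X}]/I$ is algebraic over $\C$, equivalently iff $h$ takes only finitely many values on $V(I)$. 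This is checked by the same specialization argument underlying Theorem~\ref{thm:sampling} (alternatively by comparing the Jacobian ranks, hence transcendence degrees, of the generators with and without $h$, with the genericity of the evaluation point controlled by Schwartz--Zippel). Combining the two tests yields the trichotomy global~/ local-but-not-global~/ not ME-identifiable. The SE-versus-ME verdict is then obtained by running Algorithm~\ref{alg:wronsk}, which evaluates the criterion of \cite[Lemma~1]{ioaaecc} on the monomials of the computed input-output relations: whenever the relevant Wronskian is non-degenerate, every element of $F$ is already SE-identifiable, so SE- and ME-identifiability coincide and the algorithm reports this.

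I expect the main obstacle to be the probability bookkeeping rather than any single reduction. Several subroutines are randomized---the membership tests hidden inside Algorithm~\ref{alg:project_ode}, the sampling of $\mathbf{a}$, and the genericity needed for the local and Wronskian checks---so to claim overall correctness with probability at least $p$ one must allocate the error budget: either run each randomized test at an inflated confidence $p' = 1 - (1-p)/k$ for the $k$ tests and take a union bound, or arrange a single high-probability good-point event that simultaneously validates all of the membership, algebraicity, and Wronskian conclusions. Ensuring that these estimates compose to the clean bound $p$ stated in the theorem, and not merely to some less transparent $p$-dependent quantity, is the delicate part of the argument.
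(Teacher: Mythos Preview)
Your assembly of Theorems~\ref{thm:project_ode}, \ref{thm:field_def_ode}, and~\ref{thm:sampling} together with the Wronskian check is exactly the skeleton the paper uses, but your treatment of the \emph{local} identifiability test diverges from Algorithm~\ref{alg:main}. You propose to read off local ME-identifiability from the same random fiber $I$ used in Theorem~\ref{thm:sampling} (finiteness of the values of $h$ on $V(I)$, or a Jacobian rank comparison on the field generators). The paper instead isolates this as a separate preliminary step: it first computes, via~\cite[Algorithm~1]{num_exp}, an integer $r$ such that SE-identifiability in the replica $\Sigma_r$ implies ME-identifiability (error $\leqslant \varepsilon/4$), and then applies Sedoglavic's rank test~\cite{Sedoglavic} directly to $\Sigma_r$ (error $\leqslant \varepsilon/4$). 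Only if $h$ survives this test does the algorithm proceed to compute the projections and run the field-membership test of Theorem~\ref{thm:sampling} (error $\leqslant \varepsilon/2$). This buys two things: Theorem~\ref{thm:sampling} is stated only for \emph{membership} in $F$, not for algebraicity over $F$, so the paper avoids the extension you would need to prove; and the error budget collapses to the explicit sum $\varepsilon/4 + \varepsilon/4 + \varepsilon/2 = \varepsilon$, with no generic union bound required.

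One point in your last paragraph is off. You list ``the membership tests hidden inside Algorithm~\ref{alg:project_ode}'' among the randomized subroutines needing an error allocation. They do not: Algorithm~\ref{alg:weak_membership} is Las~Vegas (Lemma~\ref{lem:weak_mem_term}: terminates with probability one and is always correct), and consequently Algorithms~\ref{alg:project_ode}, \ref{alg:socoban}, and~\ref{alg:field_def} are as well (Propositions~\ref{prop:socoban} and~\ref{prop:alg_field_def}). These steps never return a wrong answer, so they contribute nothing to the Monte~Carlo error. The only genuinely Monte~Carlo ingredients are the local test (\ref{step:local}) and the field-membership test (\ref{step:check}); the Wronskian check in Algorithm~\ref{alg:wronsk} is one-sided (\texttt{Single-experiment} is always correct, \texttt{Not~sure} makes no claim) and likewise carries no error. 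This is why the paper's bookkeeping is as short as it is.
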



\section{Projection-based (PB) elimination}
\label{sec:pb_alogs}

In this section, we will describe the algorithms from Theorem~\ref{thm:field_def_ode} and~\ref{thm:project_ode} and prove the theorems. 
The results and algorithms will be formulated in a more general context of prime differential ideals with membership oracles.

\subsection{Membership oracle for an ODE model}\label{sec:membership}

\begin{definition}[Weak membership oracle]\label{def:weak_membership}
    Let $P \subset K[\mathbf{x}^{(\infty)}]$ be a prime differential ideal over a differential field $K$ in variables $\mathbf{x} = (x_1, \ldots, x_n)$.
    Let $\mathcal{O}$ be an algorithm taking as input a list of differential polynomials from $K[\mathbf{x}^{(\infty)}]$ and returning, if it terminates, an element of $K[\mathbf{x}^{(\infty)}]$.
    
    Then $\mathcal{O}$ is called \emph{a weak membership oracle} for $P$ if, for every $f_1, \ldots, f_s \in K[\mathbf{x}^{(\infty)}]$ such that exactly one of $f_1, \ldots, f_s$ belongs to $P$ (denote it by $f$), we have $\mathcal{O}(f_1, \ldots, f_s) = f$.
\end{definition}

Let $\Sigma$ be an ODE model as in~\eqref{eq:ODEmodel}.
In this section, we describe an efficient sampling-based weak membership oracle for the corresponding ideal $I_{\Sigma}$ (see~\eqref{eq:Isigma_gens}).
Note that the representation~\eqref{eq:Isigma_gens} for $I_{\Sigma}$ is already a characteristic set of this ideal~\cite[Lemma~3.2]{HOPY2020}, so one can test membership in $I_{\Sigma}$ by reducing with respect to this characteristic set.
However, in practice, this may be very inefficient.

\begin{algorithm}[H]
\caption{Weak membership oracle for $I_\Sigma$}\label{alg:weak_membership}
\begin{description}[itemsep=0pt]
\item[Input ] A system~$\Sigma$ as in~\eqref{eq:ODEmodel} and polynomials $f_1, \ldots, f_s \in \C(\bm{\mu})[\mathbf{y}^{(\infty)}, \mathbf{u}^{(\infty)}]$ such that exactly one of $f_1, \ldots, f_s$ belongs to $I_\Sigma$.
\item[Output ] Index $i$ such that $f_i \in I_\Sigma$.
\end{description}

\begin{enumerate}[label = \textbf{(Step~\arabic*)}, leftmargin=*, align=left, labelsep=2pt, itemsep=0pt]
    \item Set $D := 1$, let $h$ be the maximum of the orders of $f_1, \ldots, f_s$;
    \item Repeat
    \begin{enumerate}[leftmargin=0mm, align=left, labelsep=2pt, itemsep=0pt, topsep=0pt]
        \item\label{st:choose_rand} Choose random values for parameters $\bm{\mu}$, initial conditions $\mathbf{x}$, and $\mathbf{u}^{(\leqslant h)}$ uniformly from $[1, D]$
        \item If  $Q$, the common denominator of $f_1, \ldots, f_s$, vanishes at the chosen values, go to the next iteration of the loop;
        \item\label{st:ps_sol} Compute the truncated power series solution of~$\Sigma$ up to order $h$ using the algorithm from~\cite{solutionsSODA} for the values chosen in~\ref{st:choose_rand};
        \item For each $1 \leqslant i \leqslant s$, plug the computed solution to $f_i$ and evaluate at $t = 0$;
        \item If exactly one of the $f_i$'s vanished after evaluation, return its index;
        \item Set $D := 2D$.
    \end{enumerate}
\end{enumerate}
\end{algorithm}

\begin{lemma}\label{lem:weak_mem_term}
  Algorithm~\ref{alg:weak_membership} is correct and terminates with probability one.  
\end{lemma}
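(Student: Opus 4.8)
The plan is to prove the two assertions separately: \emph{correctness} (whenever the algorithm returns an index $i$, one has $f_i \in I_\Sigma$) and \emph{termination with probability one}.

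\textbf{Correctness.} Let $f$ denote the unique member of $\{f_1, \ldots, f_s\}$ lying in $I_\Sigma$, and let $i_0$ be its index. By the defining property of $I_\Sigma$ recorded after~\eqref{eq:Isigma_gens}, every element of $I_\Sigma$ vanishes on every analytic solution of $\Sigma$. At any iteration reaching Step~(d), the values substituted into the $f_i$ are the order-$h$ truncation of a genuine power series solution of $\Sigma$; this is exactly why $Q \neq 0$ is enforced in Step~(b) and why the solution is computed up to the maximal order $h$ of the $f_i$, so that every derivative occurring in each $f_i$ is available for evaluation at $t=0$. Hence $f$ evaluates to $0$ at every such iteration, so $f$ is always among the polynomials that vanish; consequently, if exactly one $f_i$ vanishes it must be $f$, and the returned index is correct.

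\textbf{Termination.} The crux is to show that each non-member $f_j$ (with $j \neq i_0$) fails to vanish on a generic solution. Write $\theta = (\bm{\mu}, \mathbf{x}(0), \mathbf{u}^{(\leqslant h)}(0))$ for the sampled initial data. When $Q(\theta) \neq 0$, the order-$\leqslant h$ Taylor coefficients of the solution are rational functions of $\theta$ whose denominators are powers of $Q$, so substituting them into $f_j$ and evaluating at $t = 0$ yields a rational function $\Phi_j(\theta) = N_j(\theta)/Q(\theta)^{e_j}$. Using the parametric profile of Remark~\ref{rem:proj_ode} (equivalently, the point/power-series correspondence of~\cite[Lemma~3.5]{noether}): the base variables $\mathbf{x}, \mathbf{u}^{(\infty)}$ are algebraically independent modulo $I_\Sigma$ and every $\mathbf{y}^{(k)}$ is rational over them, so $f_j \in I_\Sigma$ if and only if $\Phi_j \equiv 0$ as a rational function of $\theta$. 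Since $f_j \notin I_\Sigma$, the numerator $N_j$ is a nonzero polynomial. Setting $B := Q \cdot \prod_{j \neq i_0} N_j$, a nonzero polynomial of some degree $d_B$, any sampled $\theta$ with $B(\theta) \neq 0$ passes Step~(b) and makes $f_{i_0}$ the unique vanishing polynomial in Step~(e), so the algorithm returns.

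It remains to argue that the independent samples eventually avoid the proper hypersurface $\{B = 0\}$. Since the sampling bound $D$ grows without bound over the iterations, at an iteration with range $\{1, \ldots, D\}$ the Schwartz--Zippel lemma bounds the probability of $B(\theta) = 0$ by $d_B/D$. As the iterations use independent samples and these failure probabilities tend to $0$ along the doubling values of $D$, the probability of failing at \emph{every} iteration is dominated by a product $\prod (d_B / D)$ that equals $0$; hence a successful iteration occurs with probability one and the algorithm halts. I expect the main obstacle to be the membership characterization $f_j \notin I_\Sigma \Rightarrow N_j \not\equiv 0$: it is where the geometry of $I_\Sigma$ (its primality, and that the base variables form a transcendence basis over which the outputs are rational) does the real work, whereas the probabilistic tail estimate and the correctness check are then routine.
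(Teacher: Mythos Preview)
Your proof is correct and follows essentially the same approach as the paper's: both reduce the non-members $f_j$ to nonzero polynomials in the base variables $(\bm{\mu},\mathbf{x},\mathbf{u}^{(\leqslant h)})$ and then apply the Schwartz--Zippel bound along the doubling sequence of~$D$. The only cosmetic difference is that the paper obtains the reduced polynomial by citing \cite[Lemma~3.2]{HOPY2020} (existence of $\widetilde F$ with $Q^N F-\widetilde F\in I_\Sigma$), whereas you construct it directly from the rational parametrization in Remark~\ref{rem:proj_ode}; these are the same computation.
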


\begin{proof}
  Assume that Algorithm~\ref{alg:weak_membership} terminates and returns index $i_0$.
  This implies, that the algorithm has computed a truncated power series solution of~$\Sigma$ on which all the $f_j$'s with $j \neq i_0$ do not vanish.
  Therefore, $f_j \not\in I_\Sigma$ for $j \neq i_0$, so $f_{i_0} \in I_\Sigma$.
  
  To analyze the probability of termination, let $i_0$ be the correct output.
  Let $F = \prod\limits_{j \neq i_0} f_j$.
  As has been shown in~\cite[Proof of Lemma~3.2]{HOPY2020} there exists a positive integer $N$ and nonzero $\widetilde{F} \in \C[\bm{\mu}, \mathbf{x}, \mathbf{u}^{(\leqslant h)}]$ such that $Q^N F - \widetilde{F} \in I_\Sigma$.
  We denote the total degree of $\widetilde{F}Q$ by $d$.

  Let $\bm{\mu}, \mathbf{x}^\ast, (\mathbf{u}^{(\leqslant h)})^\ast$ be the values chosen at~\ref{st:choose_rand} and $(\mathbf{x}(t), \mathbf{y}(t), \mathbf{u}(t))$ be the truncated power series solution computed at~\ref{st:ps_sol}.
  Then, if $\widetilde{F}(\bm{\mu}^\ast, \mathbf{x}(t), \mathbf{y}(t), \mathbf{u}(t))$ does not vanish at $t = 0$, then the same is true for $F(\bm{\mu}^\ast, \mathbf{x}(t), \mathbf{y}(t), \mathbf{u}(t))$.
  Since $\widetilde{F} \in \C[\bm{\mu}, \mathbf{x}, \mathbf{u}^{(\leqslant h)}]$, the value of $\widetilde{F}(\bm{\mu}^\ast, \mathbf{x}(t), \mathbf{y}(t), \mathbf{u}(t))$ at $t = 0$ is equal to $\widetilde{F}(\bm{\mu}^\ast, \mathbf{x}^\ast, (\mathbf{u}^{(\leqslant h)})^\ast)$.
  The probability of this value or $Q$ being zero does not exceed $\frac{d}{D}$ due to the Demillo-Lipton-Schwartz-Zippel lemma~\cite[Proposition~98]{Zippel}.
  Therefore, for each iteration of the main loop, the algorithm will not terminate on this iteration with the probability at most $\frac{d}{D}$.
  Therefore, the probability of termination is at least 
  \[
    1 - \frac{d}{1}\cdot \frac{d}{2}\cdot \frac{d}{4}\cdot \frac{d}{8}\cdot \ldots = 1.
  \]
\end{proof}


\subsection{``Socoban'' algorithm: changing the profile}

Lemma~\ref{lem:proj_proj} suggests that computing a parametric profile and the corresponding projections for the ideal of input-output relations of system $\Sigma$ as in~\eqref{eq:ODEmodel} can be carried out by changing the original profile~\eqref{eq:profile_orig} to a profile of the form
\[
(\underbrace{\leqslant 1, \leqslant 1, \ldots, \leqslant 1}_{n \text{ times}}, \underbrace{h_1, h_2, \ldots, h_m}_{m \text{ times}}, \underbrace{\infty, \ldots, \infty}_{s \text{ times}}),
\]
where none of the projections corresponding to the outputs contains a state variable.
This is done by Algorithm~\ref{alg:project_ode} below which in turn uses Algorithm~\ref{alg:socoban} for performing an elementary ``carrying'' step.

\begin{algorithm}[H]
\caption{Projecting an ODE system}\label{alg:project_ode}
\begin{description}[itemsep=0pt]
\item[Input ] A system~$\Sigma$ as in~\eqref{eq:ODEmodel}.
\item[Output ] A parametric profile and the corresponding projections for $I_{\Sigma} \cap \C(\bm{\mu})[\mathbf{y}^{(\infty)}, \mathbf{u}^{(\infty)}]$.
\end{description}

\begin{enumerate}[label = \textbf{(Step~\arabic*)}, leftmargin=*, align=left, labelsep=2pt, itemsep=0pt]
    \item Let $\mathcal{O}$ be the weak membership oracle for $I_{\Sigma}$ given by Algorithm~\ref{alg:weak_membership};
    \item Consider the following profile $\mathbf{h}$ and the corresponding projections $\mathbf{f}$ for $I_\Sigma$ (see Remark~\ref{rem:proj_ode})
    \begin{align*}
        \mathbf{h} &:= (\underbrace{1, 1, \ldots, 1}_{n \text{ times}}, \underbrace{0, 0, \ldots, 0}_{m \text{ times}}, \underbrace{\infty, \ldots, \infty}_{s \text{ times}}),\\
        \mathbf{p} &:= (Qx_1' - F_1, \ldots, Qx_n' - F_n, Qy_1 - G_1, \ldots, Qy_m - G_m, 0, \ldots, 0).
    \end{align*}
    \item\label{step:while} While there exist $n + 1 \leqslant i \leqslant n + m$ and $1 \leqslant j \leqslant n$ such that $x_j$ appears in $p_i$:
    \begin{enumerate}[leftmargin=0mm, align=left, labelsep=2pt, itemsep=0pt, topsep=0pt]
        \item Apply Algorithm~\ref{alg:socoban} with $P = I_\Sigma$, $\mathbf{h} = \mathbf{h}$, $\mathbf{f} = \mathbf{p}$, $\mathcal{O} = \mathcal{O}$, $i = i$, and $j = j$.
        \item Set $\mathbf{h} := \mathbf{h} - \mathbf{e_j} + \mathbf{e_i}$ and $\mathbf{p}$ to be the projections returned by Algorithm~\ref{alg:socoban}.
    \end{enumerate}
    \item Return $(h_{n + 1}, \ldots, h_{n + m + s})$ and $(p_{n + 1}, \ldots, p_{n + n + s})$.
\end{enumerate}
\end{algorithm}

\begin{remark}
  It often happens that there are several possible pairs $(i, j)$ at~\ref{step:while} of the algorithm.
  The choice of the one to process may have significant impact on the performance of the algorithm. 
  The choice made in our implementation is to have $\deg_{x_j} p_i$ as small as possible, similar quantities (e.g., the degree of the prolongation of $p_i$) are used to break the ties.
  Systematic study of choice heuristics is an interesting question for future research. 
\end{remark}

\begin{proposition}[Proof of Theorem~\ref{thm:project_ode}]
  Algorithm~\ref{alg:project_ode} is correct and terminates with probability one.
\end{proposition}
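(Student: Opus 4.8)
The plan is to prove the two assertions — correctness and almost-sure termination — by maintaining a loop invariant through the \textbf{while} loop of Step~\ref{step:while} and by exhibiting a strictly decreasing integer potential that bounds the number of iterations. Recall that $I_\Sigma$ is prime, so the profile/projection machinery of Lemma~\ref{lem:id_principal} and Lemma~\ref{lem:proj_proj} applies throughout.

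First I would set up the invariant: \emph{at the beginning of every iteration of the while loop, $\mathbf{h}$ is a parametric profile for $I_\Sigma$ and $\mathbf{p}$ are the corresponding projections.} The base case is immediate from Remark~\ref{rem:proj_ode}, which certifies that the profile and projections fixed in Step~2 are valid for $I_\Sigma$. For the inductive step I would invoke the correctness of the carrying subroutine (Algorithm~\ref{alg:socoban}): given a valid pair $(\mathbf{h}, \mathbf{p})$ and indices $j \leqslant n$, $n+1 \leqslant i \leqslant n+m$ with $x_j$ occurring in $p_i$, it returns the projections for the new profile $\mathbf{h} - \mathbf{e}_j + \mathbf{e}_i$, which is again a valid profile for $I_\Sigma$. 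Thus each iteration preserves the invariant.

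Next I would prove termination of the while loop via the potential $\Phi := \sum_{j=1}^n h_j$. The initial state entries are all $1$, and the update $\mathbf{h} \mapsto \mathbf{h} - \mathbf{e}_j + \mathbf{e}_i$ only ever decrements a state coordinate (since $i > n$ always), so each $h_j$ with $j \leqslant n$ stays in $\{0,1\}$. Consequently the only state base variable a projection can contain is $x_j = x_j^{(0)}$, and its presence in $p_i$ forces $h_j = 1$ whenever the guard selects $j$; the ensuing update sends $h_j$ to $0$. Hence $\Phi$ is a nonnegative integer decreasing by exactly $1$ per iteration, starting from $n$, so the loop runs at most $n$ times; and when it exits, no state variable appears in any output projection $p_{n+1}, \ldots, p_{n+m}$ (the input projections being $0$), i.e.\ $p_{n+1}, \ldots, p_{n+m+s}$ involve only $\mathbf{y}$- and $\mathbf{u}$-variables. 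For almost-sure termination of the whole procedure I would then observe that the at most $n$ iterations issue only finitely many queries to the oracle of Algorithm~\ref{alg:weak_membership}; by Lemma~\ref{lem:weak_mem_term} each query halts and answers correctly with probability one, and a finite intersection of probability-one events has probability one, so with probability one every query terminates correctly, the invariant holds throughout, and the algorithm halts. Correctness of the output then follows by applying Lemma~\ref{lem:proj_proj} after reindexing so that the $\mathbf{y}$- and $\mathbf{u}$-coordinates come first: since $p_{n+1}, \ldots, p_{n+m+s}$ do not involve the state variables, the lemma identifies $(h_{n+1}, \ldots, h_{n+m+s})$ as a parametric profile of $I_\Sigma \cap \C(\bm{\mu})[\mathbf{y}^{(\infty)}, \mathbf{u}^{(\infty)}]$ with corresponding projections $(p_{n+1}, \ldots, p_{n+m+s})$, exactly what Step~4 returns.

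The hard part will be the inductive step — the correctness of the carrying operation performed by Algorithm~\ref{alg:socoban}: one must verify that swapping a state base variable for an output base variable truly produces a new parametric profile, and that the resultant-plus-oracle computation recovers the generator of the correct principal ideal (Lemma~\ref{lem:id_principal}) rather than an extraneous factor of the kind flagged in Remark~\ref{rem:extra}. Once that subroutine is established, the present proposition reduces to the clean combination of the loop invariant, the strictly decreasing potential $\Phi$, and Lemma~\ref{lem:proj_proj}.
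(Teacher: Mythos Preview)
Your proposal is correct and follows the same approach as the paper: correctness via Lemma~\ref{lem:proj_proj} once the loop exits, termination via a monotone integer potential bounded by $n$, and almost-sure termination of each iteration deferred to the correctness/termination of Algorithm~\ref{alg:socoban} (Proposition~\ref{prop:socoban}). The only cosmetic difference is that you track the decreasing state-sum $\sum_{j\leqslant n} h_j$ whereas the paper tracks the increasing output-sum $\sum_{n<i\leqslant n+m} h_i$; since the total is constant these are equivalent, and your argument that the while-loop guard forces $h_j=1$ (so the precondition of Algorithm~\ref{alg:socoban} is met) is a useful detail the paper leaves implicit.
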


\begin{proof}
  Correctness follows from Lemma~\ref{lem:proj_proj}.
  Now we will prove termination.
  After each iteration of the while loop at~\ref{step:while}, the sum $h_{n + 1} + \ldots + h_{n + m}$ increases by one.
  Since it is bounded by $n$, there will be at most $n$ iterations.
  Moreover, each iteration will terminate with probability one due to Proposition~\ref{prop:socoban}.
\end{proof}

\begin{algorithm}[H]
\caption{``Socoban'' algorithm}\label{alg:socoban}
\begin{description}[itemsep=0pt]
\item[Input ] 
\begin{itemize}
    \item[]
    \item A prime differential ideal $P \subset K[\mathbf{x}^{(\infty)}]$ defined by
      \begin{itemize}
        \item a parametric profile $\mathbf{h}$ and corresponding projections $\mathbf{f} = (f_1, \ldots, f_n)$;
        \item and a weak membership oracle $\mathcal{O}$.
      \end{itemize}
    \item $1 \leqslant i \neq j \leqslant n$ such that $f_i$ involves $x_j$ and $h_j = 1$.
\end{itemize}

\item[Output ] the projections of $P$ corresponding to $\mathbf{h} - \mathbf{e_j} + \mathbf{e_i}$.
\end{description}

\begin{enumerate}[label = \textbf{(Step~\arabic*)}, leftmargin=*, align=left, labelsep=2pt, itemsep=0pt]
    \item Set $f := f_i'$.
    \item For $\ell \in \{1, \ldots, n\} \setminus \{i\}$ such that $h_\ell < \infty$ do
    \begin{enumerate}\label{step:elim_xl}
        \item Compute $g_1, \ldots, g_r$, the squarefree factorization over $K$ of $\Res_{x_\ell^{(h_\ell)}} (f, f_\ell)$;
        \item \label{step:elim_der_choose} Set $f := \mathcal{O}(g_1, \ldots, g_r)$.
    \end{enumerate}
    \item Set $\tilde{f}_j := f_i$ and $f_i := f$.
    \item For $\ell \in \{1, \ldots, n\} \setminus \{j\}$ such that $h_\ell < \infty$ do
    \begin{enumerate}\label{step:elim_xj}
        \item \label{step:elim_xj_res} Compute $g_1, \ldots, g_r$, the squarefree factorization over $K$ of $\Res_{x_j} (\tilde{f}_j, f_\ell)$;
        \item \label{step:elim_xj_choose} Set $\tilde{f}_\ell := \mathcal{O}(g_1, \ldots, g_r)$.
    \end{enumerate}
    \item Return $(\tilde{f}_1, \ldots, \tilde{f}_n)$.
\end{enumerate}
\end{algorithm}

\begin{lemma}\label{lem:repres}
Let $\mathbb{A}$ be an arbitrary integral domain, 
        \[
            r \in \mathbb{A}[z] \setminus \mathbb{A},
            R  \in \mathbb{A}[w_1, w_2, \ldots, w_t, z, v], 
        \]
        \[
            s_1 \in \mathbb{A}[w_1] \setminus \mathbb{A}, 
            s_2 \in \mathbb{A}[w_2] \setminus \mathbb{A},
            \ldots, 
            s_t \in \mathbb{A}[w_t] \setminus \mathbb{A} 
        \]
be non-zero polynomials such that $r$ is irreducible.
$r \nmid \operatorname{lc}_v(R)$ (where $\operatorname{lc}_v(R)$ denotes the leading coefficient of $R$ with respect to the variable $v$),
and that $\operatorname{lc}_v(R)$ does not involve $w_1, \ldots, w_t$.

\medskip

Then $\Res_{z}(r, \Res_{w_t}(s_{t}, \ldots \Res_{w_2}(s_2, \Res_{w_1}(R, s_1)) \ldots )) \neq 0$.
\end{lemma}

\begin{proof}
By replacing $\mathbb{A}$ with its algebraic closure, we will further assume that $\mathbb{A}$ is algebraically closed. 
Then each of $r, s_1, \ldots, s_t$ factors into linear factors over $\mathbb{A}$.
We will denote the roots of $r$ and $s_i$ for $1 \leqslant i \leqslant t$ in $\mathbb{A}$ by $\alpha_1, \ldots, \alpha_{\deg r}$ and $\beta_{i, 1}, \ldots, \beta_{i, \deg s_i}$, respectively.
Applying iteratively the formula for the resultant in terms of roots~\cite[Chapter~3, \S 1, Ex. 10]{CLO}, we show that the resultant of interest is equal to
\[
  C \prod R(\beta_{1, i_i}, \beta_{2, i_2}, \ldots, \beta_{t, i_j}, \alpha_j, v), 
\]
where $C$ is a power product of the leading coefficients of $r, s_1, \ldots, s_t$. 
Since none of $w_1, \ldots, w_t$ appears in $\operatorname{lc}_v(R)$ and $\operatorname{lc}_v(R)$ does not vanish at any of $\alpha_1, \ldots, \alpha_{\deg r}$, each $R(\beta_{1, i_i}, \beta_{2, i_2}, \ldots, \beta_{t, i_j}, \alpha_j, v)$ is a nonzero polynomial in $v$.
Thus, the resultant is nonzero.
\end{proof}

\begin{proposition}\label{prop:socoban}
  Algorithm~\ref{alg:socoban} terminates with probability one and is correct.
\end{proposition}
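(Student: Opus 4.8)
The plan is to prove termination first and then correctness, reducing the latter to the well-posedness of each call to the weak membership oracle $\mathcal{O}$. For termination, both \textbf{For}-loops (Steps~2 and~4) range over at most $n$ indices, so the procedure makes finitely many oracle calls; by Lemma~\ref{lem:weak_mem_term} each call terminates with probability one whenever its precondition holds (exactly one input in $P$), so the whole algorithm terminates with probability one once correctness is established. For correctness I would first check that $\mathbf{h}' := \mathbf{h} - \mathbf{e}_j + \mathbf{e}_i$ is again a parametric profile: the images of $\mathbf{x}^{(<\mathbf{h})}$ form a transcendence basis of $K[\mathbf{x}^{(\infty)}]/P$, the irreducible generator $f_i$ gives an algebraic relation for $x_i^{(h_i)}$ over this basis, and since $f_i$ involves $x_j$ (the running hypothesis of Algorithm~\ref{alg:socoban}), the Steinitz exchange lets me trade $x_j$ for $x_i^{(h_i)}$, producing the transcendence basis $\mathbf{x}^{(<\mathbf{h}')}$.

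Next I would identify the three kinds of output. Because $\mathbf{x}^{(<\mathbf{h}'+\mathbf{e}_j)} = \mathbf{x}^{(<\mathbf{h}+\mathbf{e}_i)}$, the assignment $\tilde f_j := f_i$ in Step~3 already returns the generator of $P \cap K[\mathbf{x}^{(<\mathbf{h}'+\mathbf{e}_j)}]$ with no computation. The remaining projections $\tilde f_i$ and $\tilde f_\ell$ (for $\ell \neq i,j$ with $h_\ell < \infty$) are produced by repeated resultants followed by an oracle selection, and they land in $P \cap K[\mathbf{x}^{(<\mathbf{h}'+\mathbf{e}_i)}]$ and $P \cap K[\mathbf{x}^{(<\mathbf{h}'+\mathbf{e}_\ell)}]$ respectively; by Lemma~\ref{lem:id_principal} these ideals are principal, so it suffices to show the computed elements are nonzero generators. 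Every resultant lies in $\langle \cdot, f_\ell\rangle \subseteq P$, so the analysis reduces to two points: that each resultant is nonzero, and that exactly one squarefree factor of each resultant lies in $P$.

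The nonzeroness is exactly what Lemma~\ref{lem:repres} delivers. For the chain of resultants in Step~2 I would set $\mathbb{A} := K[\mathbf{x}^{(<\mathbf{h})}, x_i^{(h_i)}]$, take $v := x_i^{(h_i+1)}$ as the surviving target variable, $R := f_i'$, the eliminated overflow variables $x_\ell^{(h_\ell)}$ ($\ell \neq i$) as $w_1,\dots,w_t$ together with $z$ (the one eliminated last), and set $s_k := f_{\ell_k}$, $r := f_{\ell^\ast}$. The hypotheses hold: each $f_{\ell_k}$ is nonconstant in its leading variable, $f_{\ell^\ast}$ is irreducible, and $\operatorname{lc}_v(R)$ is the separant $\partial f_i/\partial x_i^{(h_i)} \in \mathbb{A}$, which is nonzero in characteristic zero, free of the $w$'s, and not divisible by $r$ (since $r$ involves $z$ but the separant does not). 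Hence the full iterated resultant is nonzero; since the running polynomial at each step divides the corresponding iterated resultant (multiplicativity of the resultant), every intermediate resultant of Step~2 is nonzero. The single resultants $\Res_{x_j}(\tilde f_j, f_\ell)$ of Step~4 are handled by the same lemma with $t=0$, or directly by coprimality: $r = \tilde f_j \in P$ is irreducible while the leading coefficient of $f_\ell$ in its leading variable lies outside $P$, so $r$ cannot divide it.

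The most delicate point, which I expect to be the main obstacle, is the oracle precondition that \emph{exactly} one squarefree factor of each resultant lies in $P$. At least one does, since the resultant is a nonzero element of the prime ideal $P$. For uniqueness I would factor the current polynomial as $A = \pi \cdot e$ with $\pi$ its $P$-part and $e \notin P$; by multiplicativity $\Res_w(A,B) = \Res_w(\pi,B)\,\Res_w(e,B)$, and the second factor is not in $P$ because $e$ and the irreducible $B = f_\ell$ are coprime in $w$ (as $B \nmid e$), so $\Res_w(e,B)$ does not vanish at a generic point of the irreducible variety $V(P)$. Thus all $P$-factors come from $\Res_w(\pi,B)$, whose $P$-part is a power of a single irreducible generator, leaving precisely one squarefree factor in $P$; the oracle then returns that generator. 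Making this rigorous is the crux: it must be combined with the principality from Lemma~\ref{lem:id_principal} for the final rings and with the fact that each $f_\ell$ has leading coefficient outside $P$, so that the spurious factors contributed by leading coefficients in the resultant are ruled out of $P$ and every oracle call is indeed well-posed.
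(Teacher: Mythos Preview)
Your overall strategy matches the paper's: both reduce correctness to the nonvanishing of the iterated resultants and invoke Lemma~\ref{lem:repres}. You go further than the paper in two useful respects---verifying via a Steinitz exchange that $\mathbf{h}' := \mathbf{h}-\mathbf{e}_j+\mathbf{e}_i$ is again a parametric profile, and isolating the well-posedness of each oracle call as the crux. The paper does neither explicitly: it simply asserts that the outputs are irreducible factors lying in $P$ and then devotes itself entirely to the nonvanishing claim. One organisational difference worth noting: the paper applies Lemma~\ref{lem:repres} with $z=x_j$, $r=f_i$, $v=x_i^{(h_i+1)}$, $R=f_i'$, and $(w_k,s_k)=(x_\ell^{(h_\ell)},f_\ell)_{\ell\neq i}$, so that a \emph{single} chain covers all of Step~2 together with the $\ell=i$ iteration of Step~4; your choice of $z$ as the last Step~2 variable handles only Step~2, leaving the $\ell=i$ case of Step~4 to a separate argument, which is less clean since the $f$ entering that resultant is no longer one of the original projections.

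The substantive gap in your proposal is the uniqueness argument for the $P$-factor at the \emph{intermediate} oracle calls of Step~2. Your claim that $\Res_w(e,B)\notin P$ whenever $e\notin P$ and $B=f_\ell\in P$ is false in general: with $P=\langle w^2-x,\;y-w\rangle$, $B=w^2-x$, and $e=y+w\notin P$, one computes $\Res_w(e,B)=y^2-x\in P$. Conceptually, $e$ may vanish when $w$ is specialised to a Galois conjugate of its value on $V(P)$, and then the resultant drops into $P$ even though $e$ itself does not. Moreover, your appeal to Lemma~\ref{lem:id_principal} is unavailable at intermediate stages of Step~2: after eliminating only some of the $x_\ell^{(h_\ell)}$, the ambient ring still contains several variables algebraic over $\mathbf{x}^{(<\mathbf{h})}$ modulo $P$, so the contraction of $P$ to that ring has height at least~$2$ and is not principal. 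Thus there is no a~priori reason for exactly one squarefree (or irreducible) factor of an intermediate resultant to lie in $P$. The paper's own proof is terse on this point as well, so you have correctly identified a real subtlety; but the specific mechanism you propose to resolve it does not work.
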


\begin{proof}
  By Lemma~\ref{lem:weak_mem_term}, each of \ref{step:elim_der_choose} and \ref{step:elim_xj_choose} terminates with probability 1. 
  Therefore algorithm~\ref{alg:socoban} terminates with probability one.
  
  Each of the computed polynomials $\tilde{f}_1, \ldots, \tilde{f}_n$ is obtained as an irreducible factor of a polynomial, so is irreducible.
  Moreover, $\tilde{f}_1, \ldots, \tilde{f}_n$ are obtained from elements of $P$ by a chain of resultant computations, so they also belong to $P$.
  Therefore, in order to prove the correctness, it remains to prove that the resultants computed on steps~\ref{step:elim_xl} and~\ref{step:elim_xj} are non-zero.
   We apply Lemma~\ref{lem:repres} with
  \begin{align*}
      &(w_1, \ldots, w_t) = ({x_\ell}^{(h_\ell)}: \ell \in \{1, \ldots, n\} \setminus \{i\},\; h_\ell < \infty),\qquad z = x_j,\qquad v = {x_i}^{(h_i + 1)},\\
      &(s_1, \ldots, s_t) = (f_\ell : \ell \in \{1, \ldots, n\} \setminus \{i\},\; h_\ell < \infty),\quad r = f_i,\quad R = f_i',\quad \mathbb{A} = K[\mathbf{x}^{(<\mathbf{h} - \mathbf{e_j} + \mathbf{e_i})}],
  \end{align*}
where all values are taken at the beginning of the algorithm.
  
  Let us verify that the conditions of Lemma~\ref{lem:repres} are indeed satisfied. Since $\mathbf{h}$ is a parametric profile, $\deg_{{x_\ell}^{(h_\ell)}} f_\ell > 0$, so $s_k \in \mathbb{A}[w_k] \setminus \mathbb{A}, k = 1, \ldots, t$. 
  Since $f_i$ involves $x_j$, $r \in \mathbb{A}[z] \setminus \mathbb{A}$.
  $r = f_i$ is irreducible by the primality of $P$.
  $\deg_{{x_i}^{(h_i)}} f_i > \deg_{{x_i}^{(h_i)}} \operatorname{lc}_{{x_i}^{(h_i + 1)}}(f_i')$, so $r \nmid \operatorname{lc}_v(R)$.
  $f_i$ does not involve ${x_\ell}^{(h_\ell)}: \ell \in \{1, \ldots, n\} \setminus \{i\}$, thus by properties of the Lie derivative, $\operatorname{lc}_v(R) = \operatorname{lc}_{{x_i}^{(h_i + 1)}}(f_i')$ does not involve $(w_1, \ldots, w_t) = ({x_\ell}^{(h_\ell)}: \ell \in \{1, \ldots, n\} \setminus \{i\})$.
  
  Then the result of the computation performed by~\ref{step:elim_xl} and the iteration of~\ref{step:elim_xj} with $\ell = i$ will divide the resultant from Lemma~\ref{lem:repres} and, thus, will be nonzero.
  
  For the remaining iterations (i.e. $\ell \neq i$) of the loop in~\ref{step:elim_xj}, $\Res_{x_j}(f_i, f_\ell)$ will be nonzero since both $f_i$ and $f_\ell$ are irreducible and $f_i \neq f_\ell$ (they have different degree in $x_i^{(h_i)}$).
\end{proof}


\subsection{Checking the uniqueness of the top-dimensional component}

Algorithm~\ref{alg:high_dim_comp} below will be one of the key ingredients for Section~\ref{sec:comp_field_def}.

\begin{proposition}\label{prop:top_dim_comp}
  \begin{enumerate}
      \item[]
      \item If Algorithm~\ref{alg:high_dim_comp} returns \textbf{True}, then there is a unique prime component $\mathfrak{p} \subset I$ with $\mathfrak{p}\cap \mathbb{Q}[\mathbf{y}] = 0$.
      \item There exist positive constants $C_0, C_1$ depending only on $q_1, \ldots, q_m$ and the degree of $q$ such that, if there is a unique prime component $\mathfrak{p} \subset I$ with $\mathfrak{p}\cap \mathbb{Q}[\mathbf{y}] = 0$ and $N > C_0$, then Algorithm~\ref{alg:high_dim_comp} returns \textbf{True} with probability at least $1 -\frac{C_1}{\sqrt[3]{N}}$.
  \end{enumerate}
    
\end{proposition}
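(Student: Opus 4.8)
The plan is to translate both statements into the geometry of the finite cover of $\mathbb{A}^n_{\mathbf{x}}$ cut out by $q_1, \ldots, q_m$, and then to control the behaviour of this cover under the specialization $\mathbf{x} = \mathbf{a}$. First I would localize away from $V(\ell)$: on $D(\ell) = \{\ell \neq 0\}$ each $y_i$ is integral over $\mathbb{Q}[\mathbf{x}, \ell^{-1}]$ via $q_i$, so $W := V(q_1, \ldots, q_m) \cap D(\ell)$ is finite over $D(\ell)$, and (characteristic $0$, $q_i$ separable in $y_i$) étale over the nonvanishing locus of every discriminant, i.e.\ over $D(\ell R)$. Being smooth over a connected base, $W$ is the disjoint union of its connected components $W_1, \ldots, W_t$, each $\mathbb{Q}$-irreducible and, being open and closed over the irreducible base, surjective onto $D(\ell)$. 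The key structural observation is that the $n$-dimensional components of $I$ are exactly those $W_k$ on which $q$ vanishes identically, and that for $\mathbf{a} \in D(\ell R)$ the fiber of $W$ over $\mathbf{a}$ is the disjoint union of the fibers of the $W_k$, with $V(\widetilde{I})$ consisting of exactly those fiber points annihilated by $\widetilde{q}$.

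For the soundness statement (part~1) I would use only that the sampled $\mathbf{a}$ passed the test $R(\mathbf{a}) \neq 0$ (which the discriminant condition is set up to certify, together with $\ell(\mathbf{a}) \neq 0$). If the characteristic polynomial of $M = \sum_i b_i M_i$ is irreducible over $\mathbb{Q}$ of degree $D = \dim_{\mathbb{Q}} \mathbb{Q}[\mathbf{y}]/\widetilde{I}$, then it equals the minimal polynomial, so $\mathbb{Q}[M] = \mathbb{Q}[\theta]$ with $\theta = \sum_i b_i y_i$ is a field of dimension $D$; a dimension count forces $\mathbb{Q}[\mathbf{y}]/\widetilde{I} = \mathbb{Q}[\theta]$, whence $\widetilde{I}$ is maximal (in particular prime) and $V(\widetilde{I})$ is a single Galois orbit. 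Since Galois fixes the rational point $\mathbf{a}$ and preserves each $\mathbb{Q}$-irreducible $W_k$, this orbit lies inside the fiber of one component $W_{k_0}$. But every $n$-dimensional component of $I$ is one of the $W_k$ and hence surjects onto $D(\ell)$, so it has a nonempty fiber over $\mathbf{a}$ contained in $V(\widetilde{I}) \subseteq W_{k_0}$; disjointness of the fibers of distinct $W_k$ then forces this component to equal $W_{k_0}$, proving uniqueness.

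For the probabilistic statement (part~2) I would assume $I$ has a unique $n$-dimensional component $W_{k_0}$ and isolate four bad events whose avoidance guarantees the answer \textbf{True}: \emph{(A)} $R(\mathbf{a}) = 0$ or $\ell(\mathbf{a}) = 0$; \emph{(B)} some $W_k$ with $q|_{W_k} \not\equiv 0$ contributes a fiber point killed by $\widetilde{q}$; \emph{(C)} the fiber of $W_{k_0}$ over $\mathbf{a}$ is not a single Galois orbit, i.e.\ $\widetilde{I}$ fails to be prime; and \emph{(D)} $\theta = \sum_i b_i y_i$ is not a primitive element of the field $\mathbb{Q}[\mathbf{y}]/\widetilde{I}$. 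Outside (A)--(D) the ideal $\widetilde{I}$ is prime with $V(\widetilde{I})$ equal to the fiber of $W_{k_0}$, and the characteristic polynomial of $M = \sum_i b_i M_i$ equals $\prod_\sigma (T - \sigma(\theta))$, the irreducible degree-$D$ minimal polynomial of the primitive element $\theta$; hence \textbf{True} is returned. Events (A) and (B) confine $\mathbf{a}$ to proper subvarieties of $\mathbb{A}^n$ of degree bounded in terms of $\deg q_i$ and $\deg q$, giving probability $O(1/N)$ by Schwartz--Zippel; event (D) confines $\mathbf{b}$ to at most $\binom{D}{2}$ hyperplanes (one per pair of distinct embeddings of $\theta$), again $O(1/N)$, with $D$ bounded by $\prod_i \deg_{y_i} q_i$.

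The main obstacle is event (C): bounding the probability that the generically irreducible fiber of the $\mathbb{Q}$-irreducible cover $W_{k_0} \to D(\ell)$ becomes reducible (splits into several Galois orbits) at the rational specialization $\mathbf{x} = \mathbf{a}$. This is precisely an effective Hilbert-irreducibility question, and it is the term that governs the overall rate: whereas the elementary Schwartz--Zippel events contribute only $O(1/N)$, the quantitative irreducibility estimate available for (C) is weaker and is what produces the dominant $O(N^{-1/3})$ contribution, hence the cube root in the bound. Care is also needed to check that all auxiliary bad loci (the discriminants, the images of $\{q = 0\} \cap W_k$, and the non-primitivity hyperplanes) have degrees controlled solely by the degrees of the inputs, so that the constants $C_0, C_1$ indeed depend only on $q_1, \ldots, q_m$ and $\deg q$ and not on $N$.
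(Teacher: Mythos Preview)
Your proposal is correct and follows the same overall architecture as the paper's proof: for part~1, ``irreducible characteristic polynomial $\Rightarrow$ $\widetilde I$ prime $\Rightarrow$ unique $n$-dimensional component'', and for part~2, a decomposition into bad events with effective Hilbert irreducibility supplying the dominant $N^{-1/3}$ term. Your geometric/Galois-orbit phrasing of part~1 is a clean repackaging of the paper's component-counting argument via~\cite[Chapter~II, \S6.3]{Shafarevich}.

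The one substantive difference is in part~2. You separate (C) (the fiber of $W_{k_0}$ becoming $\mathbb Q$-reducible) from (D) (the random linear form $\theta=\sum b_iy_i$ failing to be primitive), handling (D) by the elementary hyperplane count and (C) by Hilbert irreducibility applied to some minimal polynomial of a primitive element of $\mathbb Q(W_{k_0})/\mathbb Q(\mathbf x)$. The paper instead treats the $b_i$'s as \emph{additional indeterminates} $z_1,\ldots,z_m$, forms the characteristic polynomial $\chi(\mathbf x,\mathbf z,t)$ of $\sum z_iM_i^\ast$ over $\mathbb Q(\mathbf x)$, observes that primality of the generic ideal $J$ makes $\chi$ irreducible over $\mathbb Q(\mathbf x,\mathbf z)$, and applies Cohen's theorem once to $\chi$ in all of $(\mathbf x,\mathbf z)$. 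This fuses your (C) and (D) into a single Hilbert step and, more importantly, makes the uniformity in $q$ transparent: since $J$ is one of the finitely many prime components of $\langle q_1,\ldots,q_m\rangle\subset\mathbb Q(\mathbf x)[\mathbf y]$, there are only finitely many possible $\chi$'s, all determined by $q_1,\ldots,q_m$ alone, so the Hilbert constants do not see $q$ at all.

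Your route needs this same finiteness observation, and you do not quite state it. To bound (C) you must apply Hilbert irreducibility to a polynomial $p_{k_0}(\mathbf x,t)$ attached to $W_{k_0}$, but $W_{k_0}$ is the component on which $q$ vanishes and therefore a~priori depends on $q$; Cohen's constants $C_0,C_1$ for $p_{k_0}$ would then depend on $q$ itself, not just $\deg q$. The fix is exactly the paper's: $W_{k_0}\in\{W_1,\ldots,W_t\}$ and this finite list depends only on $q_1,\ldots,q_m$, so one may apply Hilbert irreducibility to each $p_k$ in advance and take the worst constants. Your final paragraph gestures at controlling ``auxiliary bad loci'' by ``the degrees of the inputs'', but the Hilbert constants from Cohen's theorem are not degree-bounded---they depend on the polynomial itself---so this uniformity step deserves to be made explicit.
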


In the proof of the proposition, we will use the following quantitative version of the Hilbert irreducibility theorem due to Cohen~\cite{Cohen} (see also~\cite{CD17}).

\begin{lemma}[{{Follows from~\cite[Theorem~2.1]{Cohen}}}]\label{lem:effective_hilbert_irreducibility}
  For every polynomial $p \in \Q[x_1, \ldots, x_n, t]$, there exist positive constants $C_0, C_1$ such that: for every positive integer $N > C_0$:
  \[
  P(\text{for every $\Q$-irreducible factor $q$ of $p$: } q(a_1, \ldots, a_n, t) \text{ is irreducible in }\Q[t]) > 1 - C_1 / \sqrt[3]{N},
  \]
  where integers $a_1, \ldots, a_n$ are sampled independently uniformly at random from $[-N, N]$
\end{lemma}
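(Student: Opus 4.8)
The plan is to translate the algebraic conditions checked by the algorithm into the geometry of a finite \'etale cover, and then read off both parts from that picture. \emph{Geometric setup.} Let $\ell = \ell_1 \cdots \ell_m$, let $R = \prod_i r_i$ be the product of the discriminants, and set $U := \operatorname{Spec}\Q[\mathbf{x}]_{\ell R}$, the locus of $\mathbb{A}^n$ where each $q_i$ is separable of full degree in $y_i$. Over $U$ the algebra $B_0 := \Q[\mathbf{x}]_{\ell R}[\mathbf{y}]/(q_1, \ldots, q_m)$ is finite \'etale (each factor $\Q[\mathbf{x}]_{\ell R}[y_i]/(q_i)$ is, and $B_0$ is their tensor product), so $\operatorname{Spec} B_0$ is a disjoint union of connected components $D_1, \ldots, D_c$, each finite \'etale and \emph{surjective} onto the connected base $U$ (a nonempty clopen image must be all of $U$). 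Writing $\bar q$ for the image of $q$ in $B_0$, a component $D_i$ yields an $n$-dimensional component of $V(I)$ precisely when $\bar q|_{D_i} \equiv 0$; put $T := \{ i : \bar q|_{D_i} \equiv 0 \}$, so that \emph{$I$ has a unique $n$-dimensional component over $\Q$ iff $|T| = 1$} (the standing hypothesis gives $|T| \geq 1$). Finally, I record the elementary linear-algebra fact used throughout: for $\beta = \sum_i b_i \bar y_i$ acting on the finite $\Q$-algebra $A := \Q[\mathbf{y}]/\widetilde I$, the characteristic polynomial of $M = \sum_i b_i M_i$ has degree $\dim_\Q A$ and is divisible by the minimal polynomial of $\beta$; hence it is irreducible over $\Q$ iff $\Q[\beta] = A$ is a field.

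\emph{Part 1 (soundness).} Fix any $\mathbf a$ with $R(\mathbf a) \neq 0$. Then $\Q[\mathbf{y}]/(\tilde q_1, \ldots, \tilde q_m)$ is the fiber of $\operatorname{Spec} B_0 \to U$ at $\mathbf a$, hence a product of fields $\prod_k L_k$; quotienting by the image of $\tilde q$ deletes exactly the factors in which $\tilde q$ is invertible, so $A$ is again a product of fields. Moreover $\operatorname{Spec} A$ decomposes as the disjoint union of the nonempty fibers $(D_i)_{\mathbf a}$ for $i \in T$ together with the finite zero-sets of $\bar q$ on $(D_i)_{\mathbf a}$ for $i \notin T$, all mutually disjoint because the $D_i$ are. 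If the algorithm returns \textbf{True}, the characteristic polynomial is irreducible, so $A$ is a field by the linear-algebra fact, i.e. $\operatorname{Spec} A$ is connected and nonempty; a connected space can meet at most one of the disjoint nonempty blocks $(D_i)_{\mathbf a}$, forcing $|T| \leq 1$ and hence $|T| = 1$. Thus $I$ has a unique $n$-dimensional component. This argument is deterministic and uses only $R(\mathbf a)\neq 0$, exactly the condition the algorithm checks.

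\emph{Part 2 (completeness).} Assume $|T| = 1$, say $T = \{1\}$, and put $d_1 = \deg(D_1/U)$. Fix once and for all a primitive element $\theta = \sum_i c_i y_i$ of the finite separable extension $\Q(D_1)/\Q(\mathbf{x})$, with $\Q$-irreducible minimal polynomial $p(\mathbf{x}, t)$ of $t$-degree $d_1$. I guarantee a good $\mathbf a$ by three events. First, $p(\mathbf a, t)$ is irreducible of degree $d_1$: by the quantitative Hilbert irreducibility theorem (Lemma~\ref{lem:effective_hilbert_irreducibility}) this holds with probability at least $1 - C_1/\sqrt[3]{N}$, and it makes $(D_1)_{\mathbf a}$ a single $\Q$-point with coordinate ring a field $\mathbb{F}$ of degree $d_1$. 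Second, $\bar q$ has no zero on $(D_i)_{\mathbf a}$ for $i \neq 1$: since $\bar q|_{D_i} \not\equiv 0$, the image in $U$ of its zero set is a proper closed subvariety, hence lies in a hypersurface of bounded degree that a random $\mathbf a$ avoids with failure probability $O(1/N)$ by Schwartz--Zippel. Third, $R(\mathbf a)\neq 0$ and the leading coefficient and discriminant of $p$ are nonzero at $\mathbf a$, again $O(1/N)$. For such $\mathbf a$ the whole fiber equals $(D_1)_{\mathbf a}$, so $A = \mathbb{F}$ is a field of degree $d_1$. It remains to pick $\mathbf b$: $\beta = \sum_i b_i \bar y_i$ is a primitive element of $\mathbb{F}/\Q$ unless $\mathbf b$ lies in one of at most $\binom{d_1}{2}$ proper hyperplanes, avoided with failure probability $O(d_1^2/N)$; then the characteristic polynomial of $M$ equals the minimal polynomial of $\beta$ and is irreducible, so the algorithm returns \textbf{True}. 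A union bound gives success probability at least $1 - C_1/\sqrt[3]{N} - O(1/N) \geq 1 - C_1'/\sqrt[3]{N}$ for $N > C_0$.

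\emph{Main obstacle.} The dominant $C_1/\sqrt[3]{N}$ term must come \emph{entirely} from the effective Hilbert irreducibility estimate governing the splitting of $(D_1)_{\mathbf a}$, while every other bad event contributes only $O(1/N)$; the delicate bookkeeping is to check that all auxiliary objects — the discriminant product $R$, the defining polynomial $p$ of $D_1$, and the hypersurfaces capturing the projected zero-sets of $\bar q|_{D_i}$ — have degrees bounded solely in terms of $q_1, \ldots, q_m$ and $\deg q$, so that $C_0, C_1$ depend only on these data as claimed. The second point requiring care is the \'etale/connectedness structure of $\operatorname{Spec} B_0$ over $U$, which is precisely what keeps fibers of distinct components disjoint and nonempty and thereby powers both the deterministic soundness argument and the single-block count.
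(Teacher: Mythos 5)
Your proposal does not prove the statement in question. The statement is Lemma~\ref{lem:effective_hilbert_irreducibility} itself, i.e.\ the quantitative Hilbert irreducibility theorem, which the paper does not prove from scratch but obtains as a direct consequence of \cite[Theorem~2.1]{Cohen}. What you have written is instead a proof of Proposition~\ref{prop:top_dim_comp}, the correctness statement for Algorithm~\ref{alg:high_dim_comp}: your ``Part 1 (soundness)'' and ``Part 2 (completeness)'' correspond exactly to parts 1 and 2 of that proposition, and your \'etale-cover decomposition of $\operatorname{Spec}\Q[\mathbf{x}]_{\ell R}[\mathbf{y}]/(q_1,\ldots,q_m)$ plays the role of the paper's argument via finite morphisms and normality. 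That argument, whatever its merits as a proof of Proposition~\ref{prop:top_dim_comp}, is simply not a proof of the assigned lemma.

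Worse, the proposal is circular relative to the assigned statement: in your Part 2 you explicitly invoke ``the quantitative Hilbert irreducibility theorem (Lemma~\ref{lem:effective_hilbert_irreducibility})'' to control the probability that $p(\mathbf{a},t)$ stays irreducible of full degree --- that is, you use the very lemma you were asked to prove as a black box. A genuine proof cannot go through the geometry of Algorithm~\ref{alg:high_dim_comp} at all; the lemma is a statement in arithmetic, not in algebraic geometry over $\C$. One needs number-theoretic input of the kind in Cohen's paper: a (large-sieve based) count showing that, for a fixed irreducible $q \in \Q[x_1,\ldots,x_n,t]$, the number of integer points $\mathbf{a} \in [-N,N]^n$ at which $q(\mathbf{a},t)$ becomes reducible (or drops degree) is of smaller order than the total $(2N+1)^n$ sample points --- roughly $O(N^{n-1/2}\log N)$ --- after which one divides by $(2N+1)^n$, absorbs the logarithmic factor by weakening the exponent from $1/2$ to $1/3$, and takes a union bound over the finitely many $\Q$-irreducible factors of $p$. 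None of this appears in your proposal, and no purely geometric argument over $\C$ can substitute for it, since the conclusion is false if the $a_i$ are sampled from, say, a generic complex line rather than from growing integer boxes.
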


\begin{algorithm}[H]
\caption{Checking the uniqueness of the top-dimensional component}\label{alg:high_dim_comp}
\begin{description}[itemsep=0pt]
\item[Input ] a positive integer $N$ and squarefree polynomials $q_1, \ldots, q_m, q$ such that
\begin{enumerate}
    \item $q \in \Q[\mathbf{x}, \mathbf{y}]$, where $\mathbf{x} = (x_1, \ldots, x_n)$ and $\mathbf{y} = (y_1, \ldots, y_m)$;
    \item $q_i \in \Q[\mathbf{x}, y_i]$ and $\deg_{y_i} q_i > 0$ for every $1 \leqslant i \leqslant m$;
    \item $I \cap \mathbb{Q}[\mathbf{x}] = 0$, where $I := \langle q_1, \ldots, q_m, q \rangle$.
\end{enumerate}
\item[Output ] \textbf{True} or \textbf{False}, for precise interpretation (parametrized by $N$), see Proposition~\ref{prop:top_dim_comp}
\end{description}

\begin{enumerate}[label = \textbf{(Step~\arabic*)}, leftmargin=*, align=left, labelsep=2pt, itemsep=0pt]
    \item[\emph{Part 1: Specializing}]

    \item For each $1 \leqslant i \leqslant m$, compute $r_i \in \Q[\mathbf{x}]$, the discriminant of $q_i$ w.r.t. $y_i$.
    Define $R = \prod_{i=1}^m r_i$.
    
    \item\label{step:sample_ai} Sample integers $\mathbf{a} = (a_1, \ldots, a_n)$ independently uniformly at random from $[-N, N]$.
    
    \item If $R(\mathbf{a}) = 0$, return \textbf{False}.
    
    \item Set $\tilde{q}_i(y_i) := q_i(a_1, \ldots, a_n, y_i)$ for every $1 \leqslant i \leqslant m$ and $\tilde{q}(\mathbf{y}) := q(a_1, \ldots, a_n, \mathbf{y})$.
    
    \item[\emph{Part 2: Checking the primality of the specialization}]
    
    \item\label{step:sample_bi} Sample integers $\mathbf{b} = (b_1, \ldots, b_m)$ independently uniformly at random from $[-N, N]$.
    
    \item\label{step:compute_p} Use Gr\"obner bases to compute the minimal polynomial $p(z)$ of the linear form $b_1y_1 + \ldots + b_m y_m$ modulo zero-dimensional ideal $\widetilde{I} := \langle \tilde{q}_1, \ldots, \tilde{q}_m, \tilde{q} \rangle \subset \Q[\mathbf{y}]$.
    
    \item If $p$ is irreducible over $\mathbb{Q}$ and $\deg p = \dim_{\mathbb{Q}} \mathbb{Q}[\mathbf{y}] / \widetilde{I}$, return \textbf{True}. Otherwise, \textbf{False}.
\end{enumerate}
\end{algorithm}

\begin{proof}[Proof of Proposition~\ref{prop:top_dim_comp}]
  Consider zero-dimensional ideals $J := \langle q_1, \ldots, q_m, q \rangle \subset \mathbb{Q}(\mathbf{x})[\mathbf{y}]$ and $J_0 := \langle q_1, \ldots, q_m\rangle \subset \mathbb{Q}(\mathbf{x})[\mathbf{y}]$.
  The prime components of $J$ are in bijection with the prime components $\mathfrak{p} \subset I$ with $\mathfrak{p} \cap k[\mathbf{x}] = 0$, so there is a unique such component iff $J$ is prime.
  
  Consider a linear form $\ell = \lambda_1 y_1 + \ldots + \lambda_m y_m$ with $\bm{\lambda} := (\lambda_1, \ldots, \lambda_m)$ being new indeterminates.
  Then $\ell$ separates the roots of $J_0$ in $\overline{k(\mathbf{x})}$.
  Let $P, P_0 \in \mathbb{Q}(\bm{\lambda}, \mathbf{x})[t]$ be the monic minimal polynomials for $\ell$ modulo $J$ and $J_0$, respectively.
  We have $P \mid P_0$.
  Since $q_1, \ldots, q_m$ form a Gr\"obner basis, $P_0$ can be represented as a characteristic polynomial of a matrix with the entries in $\mathbb{Q}[\bm{\lambda}, \mathbf{x}, 1 / R(\mathbf{x})]$, so its coefficients belong to this ring.
  Then the same holds for $P$ by the Gauss lemma.
  \cite[Theorem~3]{Decker} implies that $J$ is prime iff $P$ is irreducible.
  
  Let $\mathbf{a} = (a_1, \ldots, a_n)$ be the integers sampled at~\ref{step:sample_ai}.
  $R(\mathbf{a}) \neq 0$ implies that the total multiplicity of solutions of $\widetilde{I}_0 := \langle \tilde{q}_1, \ldots, \tilde{q}_m\rangle$ in $\overline{\mathbb{Q}}$ is the same as the number of solutions of $J_0$.
  Thus, the minimal polynomial of $\ell$ modulo $\widetilde{I}_0$ is equal to $P_0|_{\mathbf{x} = \mathbf{a}}$.
  Then the minimal polynomial of $\ell$ modulo $\widetilde{I}$ is divisible by $P|_{\mathbf{x} = \mathbf{a}}$.
  
  Assume that the algorithm returned \textbf{True}.
  By~\cite[Theorem~3]{Decker}, we have that $\widetilde{I}$ is prime.
  Then $P|_{\mathbf{x} = \mathbf{a}}$ is irreducible, so
  $P$ is irreducible as well, and thus $J$ is prime.
  This proves the first part of the proposition.
   
   Now we will prove the second part.
   Assume that $J$ is prime.
   Let $\mathbf{b} = (b_1, \ldots, b_m)$ be the integers sampled at~\ref{step:sample_bi}.
   There is a polynomial, say $T_1(\bm{\lambda})$, such that $T_1(\mathbf{b}) \neq 0$ implies that $z = b_1 y_1 + \ldots + b_m y_m$ separates the roots of $J_0$.
   We will now assume that $T_1(\mathbf{b}) \neq 0$.
   The multiplication map by $q$ in the quotient ring $\mathbb{Q}(\mathbf{x})[\mathbf{y}] / J_0$ can be written as a matrix with the entries in $\mathbb{Q}[\mathbf{x}, 1 / R(\mathbf{x})]$.
   The dimension of its kernel is equal to the number of roots of $J$.
   Let $T_2(\mathbf{x})$ be the numerator of any maximal minor of this matrix.
   If $T_2(\mathbf{a}) \neq 0$, the number of roots of $\widetilde{I}$ is the same as of $J$.
   Then the minimal polynomial of $\ell$ modulo $\widetilde{I}$ is or the same degree as $P_{\mathbf{x} = \mathbf{a}}$, so it is equal to $P_{\mathbf{x} = \mathbf{a}}$.
   We will now assume that $T_2(\mathbf{a}) \neq 0$.    
   Then the polynomial $p(z)$ computed at the~\ref{step:compute_p} is equal to $P\vert_{\mathbf{x} = \mathbf{a}, \bm{\lambda} = \mathbf{b}}(z)$.
   Therefore, if $R(\mathbf{a}) T_1(\mathbf{b}) T_2(\mathbf{a}) \neq 0$ and the substitution $\mathbf{x} = \mathbf{a}, \bm{\lambda} = \mathbf{b}$ preserves the irreducibility of $P$, the algorithm will return \textbf{True}.
   Note that $R$ and $T_1$, and $P_0$ depend only on $q_1, \ldots, q_m$, and not on $q$ while the degree of $T_2$ can be bounded by a function of $q_1, \ldots, q_m$ and $\deg q$.
   We apply Lemma~\ref{lem:effective_hilbert_irreducibility} to $P_0(z)$ (with $t = z$), let $c_0$ and $c_1$ be the corresponding constants.
   Then, by Demillo-Lipton-Schwartz-Zippel lemma~\cite[Proposition~98]{Zippel}, the probability of $R(\mathbf{a})T_1(\mathbf{b})T_2(\mathbf{a}) \neq 0$ and $P\vert_{\mathbf{x} = \mathbf{a}, \bm{\lambda} = \mathbf{b}}(z)$ being irreducible is at least
   \[
     1 - \frac{\deg T_1 + \deg T_2 + \deg R}{N} - \frac{c_1}{\sqrt[3]{N}}, \quad \text{for }N > c_0.
   \]
   Setting $C_0 = c_0$ and $C_1 = c_1 + \deg T_1 + \deg T_2 + \deg R$ finishes the proof.
\end{proof}



\subsection{Computing the field of definition}\label{sec:comp_field_def}

Before presenting and justifying Algorithm~\ref{alg:field_def} for computing generators of a field of definition, we prove several useful properties of the field of definition.

\begin{lemma}\label{lem:field_def_basic}
  Let $I$ be an ideal of the polynomial ring $K[\mathbf{x}]$ in variables $\mathbf{x} = (x_1, \ldots, x_n)$ over a field $K$ of characteristic zero. Let $F$ be the field of definition of $I$.
  Then
  \begin{enumerate}
      \item If $I$ is a principal ideal, and its generator $f$ has at least one coefficient equal to one, then $F$ is generated by the coefficients of $f$.
      \item For any $\mathbb{Q}$-linear forms $y_1, \ldots, y_r$ in $x_1, \ldots, x_n$, the field of definition of $I \cap K[y_1, \ldots, y_r]$ is a subfield of $F$.
      \item Let $I$ be radical and $1 \leqslant s \leqslant n$. Consider $J$, the union of prime components $\mathfrak{p}$ of $I$ such that $\mathfrak{p} \cap \overline{K}[x_1, \ldots, x_s] = 0$.
      Then the field of definition of $J$ is a subfield of $F$.
  \end{enumerate}
\end{lemma}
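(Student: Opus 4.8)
The plan rests on the following reformulation of the defining property of $F$: for an ideal $I' \subseteq K[\mathbf{z}]$, the field of definition of $I'$ is contained in a subfield $L \subseteq K$ \emph{if and only if} $I'$ is generated by $I' \cap L[\mathbf{z}]$ (immediate from the minimality in the definition of field of definition). So for parts 2 and 3 it suffices to exhibit the relevant ideal as generated by its contraction to $F[\cdot]$. For part 1, I would first note the easy inclusion $F \subseteq L_0$, where $L_0$ is the field generated by the coefficients of $f$: since $f \in L_0[\mathbf{x}]$ generates $I$, the ideal $I$ is generated by $I \cap L_0[\mathbf{x}]$, so minimality gives $F \subseteq L_0$. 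For the reverse inclusion I would use that $I$ is generated by $I \cap F[\mathbf{x}]$: choosing finitely many generators $g_1, \ldots, g_k$ of the ideal $I \cap F[\mathbf{x}]$ over $F[\mathbf{x}]$, their $\gcd$ in $K[\mathbf{x}]$ equals $f$ up to a unit, because $(g_1, \ldots, g_k)K[\mathbf{x}] = (f)$ forces both $f \mid g_i$ for all $i$ and $\gcd(g_i) \mid f$. Since the $g_i$ lie in $F[\mathbf{x}]$ and the $\gcd$ of polynomials is stable under field extension, this $\gcd$ can be taken in $F[\mathbf{x}]$; thus $f = c\,d$ with $d \in F[\mathbf{x}]$ and $c \in K^\times$. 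The normalization (a coefficient of $f$ equal to $1$) then forces $c \in F$, whence $f \in F[\mathbf{x}]$ and $L_0 \subseteq F$.

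For part 2, I would first reduce to the case that $y_1, \ldots, y_r$ are $\mathbb{Q}$-linearly independent (otherwise replace them by a maximal independent subset, which does not change $K[y_1, \ldots, y_r]$) and extend them to a full system of coordinates by further $\mathbb{Q}$-linear forms. Since this coordinate change has coefficients in $\mathbb{Q} \subseteq F$, it sends $I$ to an ideal still generated by its contraction to $F[\cdot]$ and turns $I \cap K[y_1, \ldots, y_r]$ into a genuine elimination ideal. I would then compute the reduced Gr\"obner basis $G$ of $I \cap F[\mathbf{x}]$ with respect to an elimination order for the eliminated coordinates: by the field-independence of Buchberger's criterion and the uniqueness of reduced Gr\"obner bases, $G$ is simultaneously the reduced Gr\"obner basis of $I$ over $K$, so $G \subseteq F[\mathbf{x}]$. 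Elimination theory then shows that $G \cap F[y_1, \ldots, y_r]$ generates $I \cap K[y_1, \ldots, y_r]$, exhibiting this elimination ideal as defined over $F$, and the reformulation gives the claim.

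For part 3, I would pass to $I_F := I \cap F[\mathbf{x}]$, which is radical because $I$ is, and which generates $I$ over $K[\mathbf{x}]$. Decomposing $I_F = \bigcap_k Q_k$ into its minimal primes over $F$, I use that $K/F$ is separable (characteristic zero) to conclude that each extension $Q_k K[\mathbf{x}]$ is radical and equidimensional of the same dimension as $Q_k$, and that the minimal primes of $I$ over $\overline{K}$ are exactly the minimal primes of the various $Q_k K[\mathbf{x}]$, each of dimension $\dim Q_k$. Because finite intersection commutes with the flat extension $F[\mathbf{x}] \hookrightarrow K[\mathbf{x}]$, the dimension-$d$ part satisfies $I_d = \bigl(\bigcap_{k \,\colon\, \dim Q_k = d} Q_k\bigr) K[\mathbf{x}]$, so $I_d$ is generated by an ideal of $F[\mathbf{x}]$ and hence defined over $F$; the reformulation again yields that its field of definition is contained in $F$.

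The main obstacle I anticipate is the control of the field extension $F \subseteq K$ under these operations, and it is sharpest in part 3: the assertion that taking the equidimensional-$d$ component commutes with base change to $K$ relies essentially on characteristic zero, since over an imperfect field the extension of an $F$-prime could fail to be radical or could merge or split components, breaking the clean identification $I_d = (\bigcap_{\dim Q_k = d} Q_k) K[\mathbf{x}]$. A milder version of the same difficulty underlies parts 1 and 2, where I must invoke the stability under field extension of the $\gcd$ (part 1) and of the reduced Gr\"obner basis (part 2); these are standard facts, but they are precisely what makes the relation ``defined over $F$'' behave well under the relevant constructions.
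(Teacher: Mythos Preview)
Your argument is correct in all three parts, but it diverges from the paper's proof in ways worth noting.

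For part~1 your gcd argument works fine; the paper instead observes directly that $I\cap F[\mathbf{x}]$ must contain a polynomial of degree $\deg f$ (else it could not generate $f$), and any such polynomial is a scalar multiple of $f$, forcing the coefficients of $f$ into $F$ after normalization. The two arguments are comparable in length.

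For part~2 the paper avoids Gr\"obner bases entirely: it uses that ``$I$ is generated by $I\cap F[\mathbf{x}]$ as an ideal'' is equivalent to ``$I\cap F[\mathbf{x}]$ spans $I$ as a $K$-vector space'' (multiply by monomials), and then simply decomposes any $h\in I\cap K[y_1,\dots,y_r]$ along an $F$-basis of $K$; each component lands in $I\cap F[\mathbf{x}]$ and in $F[y_1,\dots,y_r]$ simultaneously. Your route via elimination orders is correct but heavier; the paper's linear-algebra argument is a couple of lines.

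The most substantial difference is part~3. You carry out an explicit equidimensional decomposition over $F$ and track it through the flat base change $F[\mathbf{x}]\hookrightarrow \overline{K}[\mathbf{x}]$, which requires checking radicality and equidimensionality of $Q_k\overline{K}[\mathbf{x}]$, that intersections commute with this extension, and that distinct $Q_k$ do not share minimal primes after extension (you should also write $\overline{K}[\mathbf{x}]$ rather than $K[\mathbf{x}]$ in the displayed formula, to match the definition of $I_d$). The paper instead uses Galois descent in one stroke: any automorphism of $\overline{K}$ over $F$ fixes $I$ setwise, hence permutes its prime components over $\overline{K}$ preserving dimension, hence fixes $I_d$; therefore $I_d$ is defined over $F$. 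Your approach is more constructive and makes the role of characteristic zero (separability) explicit, while the paper's is considerably shorter.
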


\begin{proof}
   \begin{enumerate}
       \item Since $I$ is generated by $f$, $F$ is contained in the field generated by the coefficients of $f$.
       Conversely, $I \cap F[\mathbf{x}]$ must contain at least one polynomial of degree $\deg f$, and this polynomial will be proportional to $f$.
       Since normalizing the coefficients does not increase the field generated by the coefficients, the coefficients of $f$ belong to $F$.
       
       \item Since $I \cap F[y_1, \ldots, y_r] \subset I \cap F[\mathbf{x}]$ and $I \cap F[\mathbf{x}]$ spans $I$ as a $K$-vector space, $I \cap F[y_1, \ldots, y_r]$ must span $I \cap K[y_1, \ldots, y_r]$ as a $K$-vector space.
       Therefore, $F$ contains the field of definition of $I \cap K[y_1, \ldots, y_r]$.
       
       \item It is sufficient to show that any automorphism $\alpha\colon \overline{K} \to \overline{K}$ over $F$ fixes $J$ set-wise.
       Since $\alpha$ permutes the primes of $I$ and $\mathfrak{p} \cap \overline{K}[x_1, \ldots, x_s] \iff \alpha(\mathfrak{p}) \cap \overline{K}[x_1, \ldots, x_s],$
       $\alpha$ fixes $J$ as well.
   \end{enumerate}
\end{proof}

\begin{lemma}\label{lem:diff_ideal_trunc}
   Let $P \subset K[\mathbf{x}^{(\infty)}]$ be a prime differential ideal with a parametric profile $\mathbf{h}$ and the corresponding projections $(f_1, \ldots, f_n)$.
   We define (cf.~\eqref{eq:hcirc})
   \[
      h_i^{\circ} := \begin{cases}
        h_i, \text{ if } f_i \neq 0,\\
        \max\limits_{1 \leqslant j \leqslant n}\operatorname{ord}_{x_i}f_j, \text{ if } f_i = 0.
      \end{cases}
      \quad \text{ and }\quad \mathbf{h}^\circ := (h_1^\circ, \ldots, h_n^\circ).
    \]
   Then $P \cap K[\mathbf{x}^{(\leqslant \mathbf{h})}]$ is generated by $P \cap K[\mathbf{x}^{(\leqslant \mathbf{h}^\circ)}]$.
\end{lemma}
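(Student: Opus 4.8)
The plan is to prove the two inclusions separately. Write $R_{\mathbf{h}} := K[\mathbf{x}^{(\leqslant \mathbf{h})}]$, $R_{\mathbf{h}^\circ} := K[\mathbf{x}^{(\leqslant \mathbf{h}^\circ)}]$, $Q_{\mathbf{h}} := P \cap R_{\mathbf{h}}$, and $Q_{\mathbf{h}^\circ} := P \cap R_{\mathbf{h}^\circ}$. Since $h_i^\circ \leqslant h_i$ for all $i$, we have $R_{\mathbf{h}^\circ} \subseteq R_{\mathbf{h}}$, and the inclusion $Q_{\mathbf{h}^\circ} R_{\mathbf{h}} \subseteq Q_{\mathbf{h}}$ is immediate. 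The whole content is the reverse inclusion $Q_{\mathbf{h}} \subseteq Q_{\mathbf{h}^\circ} R_{\mathbf{h}}$: every element of $P$ lying in $R_{\mathbf{h}}$ should be an $R_{\mathbf{h}}$-linear combination of elements of $P$ lying in the smaller ring.

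First I would identify the variables distinguishing the two rings. Let $E$ be the set of variables of $R_{\mathbf{h}}$ not occurring in $R_{\mathbf{h}^\circ}$; explicitly, $E = \{x_i^{(k)} : h_i = \infty \text{ and } k > h_i^\circ\}$, so that $R_{\mathbf{h}} = R_{\mathbf{h}^\circ}[E]$ is a polynomial extension. Two features of $E$ are crucial. First, every variable in $E$ is a base variable, i.e.\ it lies in $\mathbf{x}^{(<\mathbf{h})}$, because $h_i = \infty$ for the relevant indices. Second, by the very definition of $h_i^\circ$ at the indices with $f_i = 0$, none of the projections $f_1, \ldots, f_n$ involves any variable of $E$. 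Set $B_\circ := \mathbf{x}^{(<\mathbf{h})} \setminus E$, so that $B_\circ$ is precisely the set of base variables contained in $R_{\mathbf{h}^\circ}$.

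The key step is an algebraic independence statement inside the domain $D := K[\mathbf{x}^{(\infty)}]/P$. Let $K_\circ \subseteq \operatorname{Frac}(D)$ be the subfield generated by $K$ together with the images of $B_\circ$, and let $L$ be the fraction field of the image of $R_{\mathbf{h}^\circ}$ in $D$. I claim that the images of $E$ are algebraically independent over $L$. Since $\mathbf{x}^{(<\mathbf{h})}$ is a transcendence basis of $\operatorname{Frac}(D)$ over $K$, the images of $E$ are algebraically independent over $K_\circ$. On the other hand, each nonzero projection $f_i$ involves only the base variables in $B_\circ$ and its own leading variable $x_i^{(h_i)}$, and exhibits the latter as algebraic over $K_\circ$; hence the image of $R_{\mathbf{h}^\circ}$, generated by $B_\circ$ and these leading variables, is algebraic over $K_\circ$, so $L$ is an algebraic extension of $K_\circ$. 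Algebraic independence over a base field is preserved under algebraic extension of that field, so the images of $E$ are algebraically independent over $L$ as claimed.

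Finally I would expand and conclude. Given $g \in Q_{\mathbf{h}}$, write $g = \sum_\alpha c_\alpha \mathbf{E}^\alpha$ with $c_\alpha \in R_{\mathbf{h}^\circ}$ and $\mathbf{E}^\alpha$ ranging over distinct monomials in the variables of $E$. Reducing modulo $P$ yields $\sum_\alpha \overline{c_\alpha}\,\overline{\mathbf{E}}^\alpha = 0$ in $D$, a vanishing $L$-linear relation among monomials in the algebraically independent elements $\overline{E}$; therefore every $\overline{c_\alpha} = 0$, i.e.\ $c_\alpha \in P \cap R_{\mathbf{h}^\circ} = Q_{\mathbf{h}^\circ}$, and so $g \in Q_{\mathbf{h}^\circ} R_{\mathbf{h}}$. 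I expect the algebraic independence claim to be the main obstacle: the delicate point is not merely that $E$ consists of transcendentals over $K$, but that they remain transcendental over the field generated by the leading variables $x_i^{(h_i)}$. This is exactly the reason $h_i^\circ$ is defined to record the largest order in which a free variable $x_i$ appears among the projections, ensuring that the projections, and hence $R_{\mathbf{h}^\circ}$, stay within $K_\circ$ and do not use up any variable of $E$.
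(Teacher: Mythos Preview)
Your proof is correct and follows essentially the same route as the paper's: both identify the extra variables $E = \mathbf{x}^{(\leqslant \mathbf{h})} \setminus \mathbf{x}^{(\leqslant \mathbf{h}^\circ)}$, show they remain algebraically independent over (the field generated by) $\mathbf{x}^{(\leqslant \mathbf{h}^\circ)}$ modulo $P$, and deduce the generation statement from that. The paper is terser---it introduces $\mathbf{h}^\ast := \mathbf{h}^\circ - \sum_{f_i\neq 0}\mathbf{e}_i$, observes (without spelling out why) that $\mathbf{x}^{(\leqslant \mathbf{h}^\ast)}$ is a transcendence basis of $\mathbf{x}^{(\leqslant \mathbf{h}^\circ)}$, and notes $\mathbf{x}^{(\leqslant \mathbf{h}^\ast)} \cup E = \mathbf{x}^{(<\mathbf{h})}$---whereas you supply exactly the missing justifications (that the projections force each $x_i^{(h_i)}$ to be algebraic over your $K_\circ$, and the monomial-expansion step); but your $B_\circ$ is their $\mathbf{x}^{(\leqslant \mathbf{h}^\ast)}$ and the argument is the same.
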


\begin{proof}
   We consider the images of $\mathbf{x}^{(\infty)}$ modulo $P$ and denote them by the same symbols.
   The lemma will follow from the fact that $\mathbf{x}^{(\leqslant \mathbf{h})} \setminus \mathbf{x}^{(\leqslant \mathbf{h}^\circ)}$ are algebraically independent over $\mathbf{x}^{(\leqslant \mathbf{h}^\circ)}$.
   To prove this, we assume the contrary.
   Let $\mathbf{h}^\ast := \mathbf{h}^\circ - \sum\limits_{f_i \neq 0}\mathbf{e_i}$.
   Then $\mathbf{x}^{(\leqslant \mathbf{h}^\ast)}$ form a transcendence basis of $\mathbf{x}^{(\leqslant \mathbf{h}^\circ)}$ over $K$ modulo $P$.
   Therefore, there is an algebraic dependence between
   \[
     \mathbf{x}^{(\leqslant \mathbf{h}^\ast)} \cup (\mathbf{x}^{(\leqslant \mathbf{h})} \setminus \mathbf{x}^{(\leqslant \mathbf{h}^\circ)}) = \mathbf{x}^{(< \mathbf{h})},
   \]
   but this contradicts the fact that $\mathbf{h}$ is a parametric profile.
\end{proof}

\begin{algorithm}[H]
\caption{Computing the differential field of definition}\label{alg:field_def}
\begin{description}[itemsep=0pt]
\item[Input ]  A prime differential ideal $P \subset \Q(\mathbf{p})[\mathbf{x}^{(\infty)}]$, where $\mathbf{p} = (p_1, \ldots, p_s)$ are transcendental constants, defined by
      \begin{itemize}
        \item a parametric profile $\mathbf{h}$ and the corresponding projections $(f_1, \ldots, f_n)$;
        \item and a weak membership oracle $\mathcal{O}$.
      \end{itemize}
\item[Output ] a differential polynomial $f \in P$ such that the coefficients of $f, f_1, \ldots, f_n$ (after normalizing so that every nonzero polynomial has at least one coefficient $1$) generate the field of definition of $P$.
\end{description}

\begin{enumerate}[label = \textbf{(Step~\arabic*)}, leftmargin=*, align=left, labelsep=2pt, itemsep=0pt]
    \item Introduce a new variable $z$, set $N := 1$.
    \item Relabel $x_1, \ldots, x_n$ so that there exists $1 \leqslant n_0 \leqslant n$ such that $h_1, \ldots, h_{n_0} < \infty$ and $h_{n_0 + 1} = \ldots = h_n = \infty$.
    \item Clear the denominators in $f_1, \ldots, f_{n_0}$ so that $f_1, \ldots, f_{n_0} \in \Q[\mathbf{p}, \mathbf{x}^{(\infty)}]$.
    \item Let $\ell$ be the product of the leading terms of $f_i$ w.r.t. $x_i^{(h_i)}$ for every $1 \leqslant i \leqslant n_0$.
    \item For every $1 \leqslant i \leqslant n$, define 
    \begin{equation}\label{eq:hcirc}
      h_i^{\circ} := \begin{cases}
        h_i, \text{ if } i \leqslant n_0,\\
        \max\limits_{1 \leqslant j \leqslant n_0}\operatorname{ord}_{x_i}f_j, \text{ if } i > n_0.
      \end{cases}
      \quad \text{ and }\quad \mathbf{h}^\circ := (h_1^\circ, \ldots, h_n^\circ).
    \end{equation}
    \item Repeat
    \begin{enumerate}
        \item\label{step:sample_projection} Let $a_1, \ldots, a_{n_0}$ be integers sampled from $[1, N]$ independently, uniformly at random.
        \item Set $f := z - a_1 x_1^{(h_1)} - \ldots - a_{n_0} x_n^{(h_{n_0})}$.
        \item\label{step:iter_resultant} For each $j \in \{1, \ldots, n_0\}$ do $f := \Res_{x_\ell^{(h_j)}} (f, f_j)$.
        \item Let $\tilde{f}$ be the result of applying $z \to a_1 x_1^{(h_1)} + \ldots + a_{n_0} x_n^{(h_{n_0})}$ to $f$.
        \item Compute $g_1, \ldots, g_r$, the squarefree factorization of $\tilde{f}$ over $\mathbb{Q}$.
        \item\label{step:ftilde_final} Set $\tilde{f} = \mathcal{O}(g_1, \ldots, g_r)$.
        \item Apply Algorithm~\ref{alg:high_dim_comp} to the ideal $\langle f_1, \ldots, f_{n_0}, \tilde{f} \rangle \subset \Q[\mathbf{p}, \mathbf{x}^{(\leqslant \mathbf{h}^\circ)}]$ with $N = N$ and $\mathbf{y} = (x_1^{(h_1)}, \ldots, x_{n_0}^{(h_{n_0})})$.
        \item\label{step:return} If the algorithm returns \textbf{True}, return $\tilde{f}$. 
        Otherwise, set $N := 2N$.
    \end{enumerate}
\end{enumerate}
\end{algorithm}

\begin{lemma}\label{lem:field_def_diff}
   Let $P \subset K[\mathbf{x}^{(\infty)}]$ be a prime differential ideal with a parameteric profile $\mathbf{h}$, and let $\mathbf{h}^\circ$ be defined in the same way as in Lemma~\ref{lem:diff_ideal_trunc}.
   Then the differential field of definition of $P$ is generated (as a differential field) by the field of definition of $P \cap K[\mathbf{x}^{(\leqslant \mathbf{h}^\circ)}]$.
\end{lemma}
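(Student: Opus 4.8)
The plan is to prove the two inclusions $\langle F_0\rangle \subseteq F$ and $F \subseteq \langle F_0\rangle$, where $F$ is the differential field of definition of $P$, $F_0$ is the field of definition of $P \cap K[\mathbf{x}^{(\leqslant \mathbf{h}^\circ)}]$, and $\langle F_0\rangle$ is the differential subfield of $K$ generated by $F_0$. First I would record the auxiliary identity $F_0 = (\text{field of definition of } P \cap K[\mathbf{x}^{(\leqslant \mathbf{h})}])$: the inclusion ``$\subseteq$'' is Lemma~\ref{lem:field_def_basic}(2) applied to the coordinate inclusion $\mathbf{x}^{(\leqslant \mathbf{h}^\circ)} \subseteq \mathbf{x}^{(\leqslant \mathbf{h})}$, while ``$\supseteq$'' holds because Lemma~\ref{lem:diff_ideal_trunc} exhibits $P \cap K[\mathbf{x}^{(\leqslant \mathbf{h})}]$ as generated by $P \cap K[\mathbf{x}^{(\leqslant \mathbf{h}^\circ)}]$, so the former is defined over $F_0$. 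For the easy inclusion, I would note that $P$ is defined over its differential field of definition $F$; by the linear-algebra argument of Lemma~\ref{lem:field_def_basic}(2), valid also for infinitely many variables, the coordinate projection $P \cap K[\mathbf{x}^{(\leqslant \mathbf{h}^\circ)}]$ is again defined over $F$, whence $F_0 \subseteq F$, and since $F$ is a differential field, $\langle F_0\rangle \subseteq F$.

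The substance is the reverse inclusion $F \subseteq \langle F_0\rangle$, for which it suffices to show that $P$ is defined over $L := \langle F_0\rangle$, i.e. that $P = Q$, where $Q := \langle P \cap L[\mathbf{x}^{(\infty)}]\rangle$ is the extension to $K[\mathbf{x}^{(\infty)}]$ of the prime differential ideal $P_L := P \cap L[\mathbf{x}^{(\infty)}]$. The inclusion $Q \subseteq P$ is clear, and $Q$ is a differential ideal. As a preliminary step I would check that each nonzero projection $f_i$, normalized to have a coefficient equal to $1$, has all its coefficients in $F_0$: by Lemma~\ref{lem:id_principal}, $f_i$ generates the principal ideal $P \cap K[\mathbf{x}^{(< \mathbf{h} + \mathbf{e}_i)}]$, whose field of definition is generated by these coefficients (Lemma~\ref{lem:field_def_basic}(1)) and is contained in $F_0$ (Lemma~\ref{lem:field_def_basic}(2)). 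Consequently $f_i \in P_L$, every derivative $f_i^{(k)} \in P_L \subseteq Q$, and the separants and initials of the $f_i$ lie in $L[\mathbf{x}^{(\infty)}]$ but, since $P$ is prime and $\mathbf{h}$ is a parametric profile, not in $P$, hence not in $P_L$.

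To prove $P \subseteq Q$, I would take $g \in P$ and Ritt-reduce it modulo the derivatives of the projections with respect to the ranking~\eqref{eq:ranking}, for which $\{f_i : h_i < \infty\}$ is autoreduced (cf.\ Remark~\ref{rem:charsets}). This yields $H g = \sum_k A_k f_{i_k}^{(j_k)} + r$, where $H$ is a product of separants and initials of the $f_i$ and $r$ is reduced. I would emphatically \emph{not} claim $r = 0$; instead, being reduced under~\eqref{eq:ranking}, $r$ involves each $x_i$ only up to order $h_i$, so $r = Hg - \sum_k A_k f_{i_k}^{(j_k)} \in P \cap K[\mathbf{x}^{(\leqslant \mathbf{h})}]$, which is contained in $Q$ by Lemma~\ref{lem:diff_ideal_trunc} and the base-field computation of the first paragraph. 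Since also each $f_{i_k}^{(j_k)} \in Q$, we obtain $H g \in Q$. Finally $H \in L[\mathbf{x}^{(\infty)}] \setminus P_L$, and because $K$ is a free, hence faithfully flat, $L$-module, multiplication by $H$ on $K[\mathbf{x}^{(\infty)}]/Q = K \otimes_L (L[\mathbf{x}^{(\infty)}]/P_L)$ is injective (it is $\mathrm{id}_K \otimes (\cdot H)$ with $\cdot H$ injective on the domain $L[\mathbf{x}^{(\infty)}]/P_L$). Thus $H$ is a nonzerodivisor modulo $Q$, so $g \in Q$, giving $P = Q$ and $F \subseteq L$.

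I expect the main obstacle to be exactly the extra-components phenomenon of Remark~\ref{rem:extra}: the differential ideal generated by the projections is in general strictly smaller than $P$, so a naive ``reduce to zero'' would only prove membership in $I_{\mathbf{h}} \subsetneq P$ and fails for general $g \in P$. The two ingredients that circumvent this are that the reduced remainder $r$ lands in the truncation $P \cap K[\mathbf{x}^{(\leqslant \mathbf{h})}]$ (which \emph{is} defined over $F_0$, by Lemma~\ref{lem:diff_ideal_trunc}), and that faithful flatness of $K$ over $L$ turns the separant/initial multiplier $H$ into a nonzerodivisor modulo $Q$ even though $Q$ need not be prime. Verifying these two points carefully, together with the infinitely-many-variables form of Lemma~\ref{lem:field_def_basic}(2), is where the real work lies.
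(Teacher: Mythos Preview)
Your proposal is correct and follows essentially the same route as the paper: both arguments (i) observe that the normalized projections $f_i$ have coefficients in $F_0$, (ii) partially reduce an arbitrary $g\in P$ by the $f_i$ and their derivatives to a remainder $r\in P\cap K[\mathbf{x}^{(\leqslant\mathbf{h})}]$, and (iii) use that this truncated ideal is already defined over $F_0$ together with the fact that the multiplier $H$ (resp.\ $s$) lies outside $P$ to conclude $g$ is in the $K$-ideal generated by $P\cap L[\mathbf{x}^{(\infty)}]$. The only cosmetic difference is the last step: the paper decomposes $g$ and $r$ along an $F_0$-basis $\{e_\lambda\}$ of $K$ and uses primality of $P$ directly on each component, whereas you phrase the same computation as ``$K$ is free over $L$, hence multiplication by $H$ is $\mathrm{id}_K\otimes m_H$ and stays injective''---these are the same argument in different language.
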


\begin{proof}
   Let the differential field of definition of $P$ be $F$ and the differential field generated by the field of definition of $P \cap K[\mathbf{x}^{(\leqslant \mathbf{h})}]$ be $F_0$.
   Since $P \cap F[\mathbf{x}^{(\leqslant \mathbf{h})}]$ must generate $P \cap K[\mathbf{x}^{(<\mathbf{h})}]$, we have $F_0 \subset F$.
   
   After reordering $x_1, \ldots, x_n$ if necessary, we will further assume that there exists $1 \leqslant n_0 \leqslant n$ such that $h_1, \ldots, h_{n_0} < \infty$ and $h_{n_0 + 1} = \ldots = h_n = \infty$.
   Let $f_1, \ldots, f_n$ be the projections corresponding to $\mathbf{h}$ such that each nonzero $f_i$ is normalized to have at least one coefficient that is one.
   Then the first part of Lemma~\ref{lem:field_def_basic} implies that the coefficients of $f_1, \ldots, f_{n_0}$ belong to $F_0$.
   We introduce the following ranking~\cite[Definition~3.1]{Hubert2003} on $K[\mathbf{x}^{(<\infty)}]$ (as in Remark~\ref{rem:charsets}):
   \[
     x_i^{(a)} \prec x_j^{(b)} \iff (a - h_i < b - h_j) \text{ or } (a - h_i = b - h_j \text{ and } i < j)
   \]
   for every $i \neq j$.
   Then $f_1, \ldots, f_{n_0}$ is a weak differential triangular set~\cite[Defintion~3.7]{Hubert2003b} w.r.t. this ranking.
   Let $g \in P$. 
   By performing a partial reduction~\cite[Algorithm~3.12]{Hubert2003b} of $g$ with respect to $f_1, \ldots, f_{n_0}$, we obtain $\overline{g} \in P\cap K[\mathbf{x}^{(\leqslant \mathbf{h})}]$ and $S$, a product of powers of the separants of $f_1, \ldots, f_{n_0}$ such that 
   \[
     Sg - \sum c_{i, j}f_i^{(j)} = \overline{g}
   \]
   for some $c_{i, j} \in K[\mathbf{x}^{(\infty)}]$.
   Let $\{e_\lambda\}_{\lambda \in \Lambda}$ be a $F_0$ basis of $K$, and, for every $\lambda\in \Lambda$, we denote by $g_\lambda$ and $\overline{g}_\lambda$ the corresponding coordinates of $g$ and $\overline{g}$, respectively.
   Since the coefficients of $S$ and all the derivatives of $f_1, \ldots, f_n$ belong to $F_0$, for every $\lambda\in \Lambda$, we have $Sg_\lambda - \overline{g}_\lambda \in \langle f_1, \ldots, f_n\rangle^{(\infty)}$.
   Since $\overline{g} \in K[\mathbf{x}^{(\leqslant \infty)}]$, we have $\overline{g}_{\lambda} \in P$.
   Since $s \not\in P$ and $P$ is prime, we have $g_\lambda \in P$.
   Therefore, $P$ is generated by $P \cap F_0[\mathbf{x}^{(\infty)}]$, so $F_0 = F$.
\end{proof}

\begin{lemma}\label{lem:proj_distinct}
   Consider an $n + m$-dimensional affine space $\mathbb{A}^{n + m}$ over an algebraically closed field $k$ with coordinates $x_1, \ldots, x_n, y_1, \ldots, y_m$.
   Let $X \subset \mathbb{A}^{n + m}$ be an $n$-dimensional variety such that $X$ projects dominantly onto $x_1, \ldots, x_n$.
   Then there is a nonzero polynomial $P \in K[z_1, \ldots, z_m]$ for some $K \supset k$ such that, for every $\mathbf{a} = (a_1, \ldots, a_m) \in \Q^m$ with  $P(\mathbf{a}) \neq 0$, the images of the irreducible components of $X$ with respect to the projection 
   $\pi_{\mathbf{a}}(\mathbf{x}, \mathbf{y}) := (\mathbf{x}, a_1y_1 + \ldots + a_m y_m)$
   are distinct.
\end{lemma}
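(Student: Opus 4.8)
The plan is to reduce the claim to a pairwise separation statement, settle that over the generic point of $\mathbb{A}^n$, and then descend the resulting genericity condition from $k$ down to $\mathbb{Q}$. Write $X = X_1 \cup \dots \cup X_s$ for the decomposition into irreducible components; since $k$ is algebraically closed, each $X_i$ is defined over $k$. The images $\overline{\pi_{\mathbf{a}}(X_i)}$ are pairwise distinct exactly when this holds for every pair $i \neq i'$, so it is enough to produce, for each pair, a nonzero polynomial in $\mathbf{a}$ whose non-vanishing forces $\overline{\pi_{\mathbf{a}}(X_i)} \neq \overline{\pi_{\mathbf{a}}(X_{i'})}$; multiplying these together, and then descending, will yield $P$.

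For the separation I would work over $K := k(x_1, \dots, x_n)$. Because $\dim X = n$ and $X$ projects dominantly onto $\mathbb{A}^n$, the projection is generically finite, so each component dominates $\mathbb{A}^n$ with a finite, nonempty generic fibre $S_i \subset \overline{K}^{\,m}$. For $i \neq i'$ the intersection $X_i \cap X_{i'}$ has dimension $< n$ and hence does not meet the generic fibre, giving $S_i \cap S_{i'} = \varnothing$. The key observation is that $\overline{\pi_{\mathbf{a}}(X_i)}$ is the irreducible hypersurface of $\mathbb{A}^{n+1}$ whose fibre over the generic point of $\mathbb{A}^n$ is the value set $V_i(\mathbf{a}) := \{\mathbf{a}\cdot p : p \in S_i\} \subset \overline{K}$, so the two images coincide if and only if $V_i(\mathbf{a}) = V_{i'}(\mathbf{a})$. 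Fixing any $p_0 \in S_i$ (so $p_0 \notin S_{i'}$), each form $\ell_q(\mathbf{a}) := \mathbf{a}\cdot(p_0 - q)$, for $q \in S_{i'}$, is a nonzero element of $\overline{K}[\mathbf{a}]$; whenever $\prod_{q \in S_{i'}} \ell_q(\mathbf{a}) \neq 0$, the value $\mathbf{a}\cdot p_0$ lies in $V_i(\mathbf{a}) \setminus V_{i'}(\mathbf{a})$, forcing the images to differ. Thus the bad locus of the pair is contained in the zero set of the nonzero polynomial $D_{i,i'} := \prod_{q \in S_{i'}} \ell_q$, and I set $D := \prod_{i \neq i'} D_{i,i'} \in \overline{K}[\mathbf{a}] \setminus \{0\}$.

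It then remains to descend: $D$ has coefficients in $\overline{K}$, whereas we only evaluate at rational $\mathbf{a}$. Here I would use the elementary fact that if $D \in \overline{K}[\mathbf{a}]$ is nonzero and vanishes on a set $S \subseteq \mathbb{Q}^m$, then some nonzero $\mathbb{Q}$-polynomial vanishes on $S$: expanding $D = \sum_\mu b_\mu P_\mu$ with $b_\mu \in \overline{K}$ linearly independent over $\mathbb{Q}$ and $P_\mu \in \mathbb{Q}[\mathbf{a}]$ not all zero, for $\mathbf{a} \in \mathbb{Q}^m$ the scalars $P_\mu(\mathbf{a})$ are rational and $D(\mathbf{a}) = \sum_\mu b_\mu P_\mu(\mathbf{a}) = 0$ forces every $P_\mu(\mathbf{a}) = 0$; any nonzero $P_\mu$ then works. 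Applying this with $S$ the set of rational $\mathbf{a}$ for which some pair of images coincides (on which $D$ vanishes, by the previous paragraph) produces the desired $P$, regarded as a polynomial in $y_1, \dots, y_m$ after renaming $\mathbf{a} \mapsto \mathbf{y}$.

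The main obstacle is the geometric core of the second paragraph: the identification of $\overline{\pi_{\mathbf{a}}(X_i)}$ with the hypersurface cut out by the value set $V_i(\mathbf{a})$, together with the disjointness $S_i \cap S_{i'} = \varnothing$, the latter being exactly where distinctness of the components is used. I also expect to lean on the generic finiteness of $\pi$: it is what guarantees each component dominates with a finite fibre and makes the value-set description (and the conclusion itself) correct. If a component had positive-dimensional fibres over a proper subvariety of $\mathbb{A}^n$, its $\pi_{\mathbf{a}}$-image could fill out $\overline{\pi(X_i)} \times \mathbb{A}^1$ and coincide with that of another such component, so this hypothesis is essential. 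By contrast, the descent is routine once phrased through $\mathbb{Q}$-linear independence, and it is the fact that we need only compare value \emph{sets}, rather than individual points, that lets a single generic linear form perform the separation.
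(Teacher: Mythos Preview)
Your proof is correct and follows essentially the same route as the paper's: both pass to the generic fibre over $k(\mathbf{x})$ to reduce to a zero-dimensional separation problem, exhibit a nonzero polynomial in $\mathbf{a}$ (over $\overline{K}$, respectively $k$) whose nonvanishing forces the projected components apart, and then descend to $\mathbb{Q}$ by expanding over a $\mathbb{Q}$-basis. The only cosmetic difference is that the paper first reduces explicitly to $n=0$ and then separates the individual points of the resulting finite scheme, whereas you work directly with the value sets $V_i(\mathbf{a})$ of the generic fibres $S_i$; one small caveat is that your claim ``each component dominates $\mathbb{A}^n$'' uses that $X$ is pure of dimension $n$, which is implicit in the statement (and holds in the paper's application) but is worth stating.
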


\begin{proof}
   The fact that the images of the components of $X$ with respect to $\pi_{\mathbf{a}}$ are distinct is equivalent to the fact that the linear form $a_1y_1 + \ldots + a_m y_m$ separates the solutions in $\overline{k(\mathbf{x})}$ of the zero-dimensional ideal $J$ generated by $I$ in $k(\mathbf{x})[\mathbf{y}]$.
   This condition can be written as a system of algebraic inequalities, so $P$ can be taken to the be the product of these inequalities.
\end{proof}

\begin{proposition}[Proof of Theorem~\ref{thm:field_def_ode}]\label{prop:alg_field_def}
     Algorithm~\ref{alg:field_def} terminates with probability one and is correct.
\end{proposition}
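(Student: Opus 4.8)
The plan is to reduce both claims to an analysis of the field of definition of the truncated ideal $I^\circ := P \cap \Q(\mathbf p)[\mathbf x^{(\leqslant \mathbf h^\circ)}]$, and to read off geometrically what the resultant-and-oracle computation produces. First I would invoke Lemma~\ref{lem:field_def_diff}: the differential field of definition of $P$ is generated by the field of definition $F$ of $I^\circ$, and since the $\mathbf p$ are constants this differential field is $F$ itself. After the relabelling of Step~2, the variables $B := \mathbf x^{(<\mathbf h)} \cap \mathbf x^{(\leqslant \mathbf h^\circ)}$ form a transcendence basis modulo $I^\circ$, while the remaining variables are exactly $\mathbf y := (x_1^{(h_1)}, \ldots, x_{n_0}^{(h_{n_0})})$, each algebraic over $B$ through $f_1, \ldots, f_{n_0}$. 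I would then identify what Steps~6b--6f compute: the iterated resultants eliminate $\mathbf y$ from the generic linear form $z - \sum_i a_i x_i^{(h_i)}$ using $f_1, \ldots, f_{n_0}$, so that, up to extraneous factors supported on the leading-coefficient product $\ell$, the result is a generator $\mu$ of the principal ideal obtained by intersecting $P + \langle z - \sum_i a_i x_i^{(h_i)}\rangle$ with $\Q(\mathbf p)[B, z]$, i.e.\ the primitive minimal polynomial of $z$ over $\Q(\mathbf p)(B)$ (principality as in Lemma~\ref{lem:id_principal}).

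Next I would verify $\tilde f \in P$. Substituting $z \mapsto \sum_i a_i x_i^{(h_i)}$ sends $\mu$ into $P$, so its image divides $\tilde f$ and lies in $P$; the extraneous factors are supported on $\ell$ and do not involve $\mathbf y$, hence avoid the prime $P$. This is exactly what makes precisely one squarefree factor of $\tilde f$ lie in $P$, so the precondition of the weak membership oracle (Definition~\ref{def:weak_membership}) holds and Step~6f returns that factor; by Lemma~\ref{lem:weak_mem_term} the oracle call terminates with probability one and returns the correct factor, giving $\tilde f \in P$.

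For the field of definition, write $F'$ for the field generated by the normalised coefficients of $\tilde f, f_1, \ldots, f_{n_0}$. To show $F' \subseteq F$ I would observe that each $f_i$ generates $I^\circ \cap \Q(\mathbf p)[B, x_i^{(h_i)}]$ (Lemma~\ref{lem:id_principal}) and that $\mu$ generates the intersection with $\Q(\mathbf p)[B, z]$, where $B$ and $z = \sum_i a_i x_i^{(h_i)}$ are $\Q$-linear forms; Lemma~\ref{lem:field_def_basic}(2) places the fields of definition of these intersections inside $F$, and Lemma~\ref{lem:field_def_basic}(1) identifies them with the fields generated by the coefficients of $f_i$ and of $\mu$ (which equal those of $\tilde f$ up to the rational scalars $a_i$). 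For $F \subseteq F'$ I would use the exit condition: when Algorithm~\ref{alg:high_dim_comp} returns \textbf{True}, Proposition~\ref{prop:top_dim_comp}(1) gives that $\langle f_1, \ldots, f_{n_0}, \tilde f\rangle \colon \ell^\infty$ has a unique top-dimensional (codimension $n_0$) component over $\Q$, with $\mathbf p$ treated as coordinates. Because $\mathbf p$ belongs to a transcendence basis of that component, extension of scalars to $\Q(\mathbf p)$ leaves a single top-dimensional component, which is forced to equal $I^\circ$ (it is prime, of the correct dimension, and contained in $I^\circ$). Lemma~\ref{lem:field_def_basic}(3) then bounds the field of definition of $I^\circ$ by that of $\langle f_1, \ldots, f_{n_0}, \tilde f\rangle \colon \ell^\infty$, which is generated over $F'$; hence $F \subseteq F'$ and $F = F'$.

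For termination, note that a failing iteration only doubles $N$, so it suffices to show the check eventually succeeds. By Lemma~\ref{lem:proj_distinct}, for all $\mathbf a$ outside a proper subvariety the projection along $z = \sum_i a_i x_i^{(h_i)}$ sends the distinct irreducible components of the variety of $\langle f_1, \ldots, f_{n_0}\rangle \colon \ell^\infty$ to distinct images; consequently $\tilde f$, which cuts out the image of the component defined by $I^\circ$, fails to vanish on every other component, and $\langle f_1, \ldots, f_{n_0}, \tilde f\rangle \colon \ell^\infty$ acquires that component as its unique top-dimensional one. Proposition~\ref{prop:top_dim_comp}(2) then makes Algorithm~\ref{alg:high_dim_comp} return \textbf{True} with probability at least $1 - C_1/\sqrt[3]{N}$, which tends to $1$ as $N$ doubles, so the loop exits with probability one. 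I expect the main obstacle to be the reverse inclusion $F \subseteq F'$: transferring the ``unique top-dimensional component'' statement from $\Q$ (where $\mathbf p$ are coordinates) to $\Q(\mathbf p)$ and certifying that this component is exactly $I^\circ$, together with the bookkeeping that isolates the genuine minimal-polynomial factor from the $\ell$-supported extraneous factors so that the oracle is applicable.
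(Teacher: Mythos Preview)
Your approach is essentially the paper's: reduce to the field of definition of $P_0 = P\cap\Q(\mathbf p)[\mathbf x^{(\leqslant\mathbf h^\circ)}]$ via Lemma~\ref{lem:field_def_diff}, obtain $F'\subseteq F$ from Lemma~\ref{lem:field_def_basic}(1,2), obtain $F\subseteq F'$ from the unique-top-component certificate together with Lemma~\ref{lem:field_def_basic}(3), and handle termination through Lemma~\ref{lem:proj_distinct} plus Proposition~\ref{prop:top_dim_comp}(2).

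Two points need tightening. First, the claim that ``the extraneous factors are supported on $\ell$ and do not involve $\mathbf y$'' is not correct in general for iterated resultants, and it is not the right mechanism here. The correct reason exactly one irreducible factor of the substituted resultant lies in $P$ is that the entire polynomial lies in the \emph{principal} prime ideal $P\cap\Q(\mathbf p)[\mathbf x^{(<\mathbf h)},z^\ast]$ (with $z^\ast=\sum_i a_i x_i^{(h_i)}$), so any irreducible factor belonging to $P$ must be a scalar multiple of the unique generator. This argument also yields immediately that the oracle output is irreducible and equals that generator, which is exactly what you need in order to invoke Lemma~\ref{lem:field_def_basic}(1).

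Second, your termination argument has two omissions. You must show that the iterated resultant in Step~6c is never zero (otherwise there is no factor in $P$ to return); the paper does this with Lemma~\ref{lem:repres}. And you must bound $\deg\tilde f$ \emph{uniformly} in the random tuple $\mathbf a$ (the paper gets $\leqslant 2^{n_0}d^{n_0}$), because the constants $C_0,C_1$ in Proposition~\ref{prop:top_dim_comp}(2) depend on $\deg q = \deg\tilde f$. Without a bound that is independent of the iteration, the constants could drift and you cannot conclude that the success probability $1-C_1/\sqrt[3]{N}$ actually tends to $1$ along the doubling sequence of $N$'s.
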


\begin{proof}
   We will start with proving \emph{the correctness} of the algorithm.
   Assume that the algorithm has terminated after sampling numbers $a_1^\ast, \ldots, a_{n_0}^\ast$ at~\ref{step:sample_projection}.
   Let $z^\ast := a_1^\ast x_1^{(h_1)} + \ldots + a_{n_0}^\ast x_n^{(h_{n_0})}$.
   Since $x_i^{(h_i)}$ is algebraic over $\mathbb{Q}[\mathbf{p}, \mathbf{x}^{(< \mathbf{h})}]$ modulo $P$ for every $1 \leqslant i \leqslant n_0$, $z^{\ast}$ is algebraic modulo $P$ over this ring as well.
   Since $\mathbf{p}, \mathbf{x}^{(< \mathbf{h})}$ are $\mathbb{Q}$-algebraically independent modulo $P$, the ideal
   \[
     P \cap \mathbb{Q}[\mathbf{p}, \mathbf{x}^{(<\mathbf{h})}, z^\ast]
   \]
   is a prime principal ideal.
   Since $\tilde{f}$ computed at~\ref{step:ftilde_final} belongs to this ideal and is irreducible, $\tilde{f}$ is the generator of the ideal.
   Therefore, it generates the prime principal ideal
   \[
     P \cap \mathbb{Q}(\mathbf{p})[\mathbf{x}^{(<\mathbf{h})}, z^\ast].
   \]
   Let $F$ be the field of definition of $P$ (since $\Q(\mathbf{p})$ is a constant field, this is the same as the differential field of definition) and $F_0$ be the field generated by the coefficients of $f_1, \ldots, f_{n_0}, \tilde{f}$ after normalizing at least one of the coefficients to be one.
   Lemma~\ref{lem:field_def_diff} implies that $F$ also equals to the field of definition of $P \cap \Q(\mathbf{p})[\mathbf{x}^{(\leqslant \mathbf{h})}]$ which equals to the field of definition of $P_0 := P \cap \Q(\mathbf{p})[\mathbf{x}^{(\leqslant \mathbf{h}^\circ)}]$ due to Lemma~\ref{lem:diff_ideal_trunc}.
   Combining parts 1 and 2 of Lemma~\ref{lem:field_def_basic}, we deduce that $F_0 \subset F$.
   On the other hand, since the ideal $\langle f_1, \ldots, f_{n_0}, \widetilde{f}\rangle \subset \Q[\mathbf{p}, \mathbf{x}^{(\leqslant \mathbf{h}^\circ)}]$ has a unique component of codimension $n_0$, this component is $P_0$, so $F \subset F_0$ due to the part 3 of Lemma~\ref{lem:field_def_basic}.
   \emph{Thus, the correctness of the algorithm is proved.}
   
   Now we will prove that \emph{the algorithm terminates with probability one}.
   The polynomial $f$ computed at~\ref{step:iter_resultant} will be always nonzero due to Lemma~\ref{lem:repres}.
   Let $d$ be the maximum of the degrees of $f_1, \ldots, f_{n_0}$.
   Then the degree of $f$ will not exceed $2^{n_0}d^{n_0}$, so the same is true for $\widetilde{f}$.
   Therefore, Proposition~\ref{prop:top_dim_comp} implies that there exist constants $C_0$ and $C_1$ such that, if the ideal $\langle f_1, \ldots, f_{n_0}, \widetilde{f} \rangle$ of $\Q[\mathbf{p}, \mathbf{x}^{(\leqslant \mathbf{h}^\circ)}]$ has a unique component projecting dominantly on $\{\mathbf{p}, \mathbf{x}^{(\leqslant \mathbf{h}^\circ)}\} \setminus \{x^{(h_1)}, \ldots, x^{(h_{n_0})}\}$ and $N > C_0$, then the probability that the algorithm will return the result at~\ref{step:return} is at least $1 - \frac{C_1}{\sqrt[3]{N}}$.
   
   We apply Lemma~\ref{lem:proj_distinct} to the variety $X$ consisting of all the components of $\langle f_1, \ldots, f_{n_0}\rangle  \subset \Q[\mathbf{p}, \mathbf{x}^{(\leqslant \mathbf{h}^\circ)}]$ projecting dominantly on $\{\mathbf{p}, \mathbf{x}^{(\leqslant \mathbf{h}^\circ)}\} \setminus \{x^{(h_1)}, \ldots, x^{(h_{n_0})}\}$ with
   \[
   \mathbf{x} = \{\mathbf{p}, \mathbf{x}^{(\leqslant \mathbf{h}^\circ)}\} \setminus \{x^{(h_1)}, \ldots, x^{(h_{n_0})}\}\quad \text{ and }\quad \mathbf{y} = \{x^{(h_1)}, \ldots, x^{(h_{n_0})}\},
   \]
   and denote the polynomial provided by the lemma by $Q$.
   We denote the degree of $Q$ by $d_0$.
   Consider an iteration of the ``repeat'' loop such that $N > C_0$.
   We denote the integers sampled at~\ref{step:sample_projection} by $a_1^\ast, \ldots, a_{n_0}^\ast$.
   Assume that $Q(\mathbf{a}^\ast) \neq 0$.
   Since $\widetilde{f}$ is the defining polynomial of the projection $\pi_{\mathbf{a}^\ast}$ (in the notation of Lemma~\ref{lem:proj_distinct}) of the variety defined by $P \cap \Q[\mathbf{p}, \mathbf{x}^{(\leqslant \mathbf{h}^\circ)}]$, and this variety is one of the dominantly projected components of $X$, $Q(\mathbf{a}^\ast) \neq 0$ implies that $\widetilde{f}$ does not vanish on any other dominantly projecting component of $X$.
   Therefore, the ideal $\langle f_1, \ldots, f_{n_0}, \widetilde{f}\rangle$ has a unique such component, so the algorithm will return with the probability at least $1 - \frac{C_1}{\sqrt[3]{N}}$.
   
   The Demillo-Lipton-Schwartz-Zippel lemma~\cite[Proposition~98]{Zippel} implies that $Q(\mathbf{a}^\ast) = 0$ with probability at most $\frac{d_0}{N}$.
   Therefore, for each iteration of the repeat loop with $N > C_0$ the probability of this iteration not being the last one is at most
   \[
     1 - \left(1 - \frac{d_0}{N}\right) \left( 1 - \frac{C_1}{\sqrt[3]{N}}\right) = \frac{d_0}{N} + \frac{C_1}{\sqrt[3]{N}} - \frac{d_0C_1}{N^{4/3}}.
   \]
   This value will eventually become less that $\frac{1}{2}$, so the probability of the algorithm not terminating will not exceed $\frac{1}{2} \cdot \frac{1}{2} \cdot \ldots = 0$.
\end{proof}


\subsection{Factoring resultants \emph{before} computing}\label{sec:var_change}

The algorithms described in Sections~\ref{sec:membership}-\ref{sec:comp_field_def} give a complete procedure for computing a representation of the projection of an ODE system and the generators of the field of definition of this projection.
These algorithms rely heavily on the resultant computation (e.g., \ref{step:elim_xj_res} in Algorithm~\ref{alg:socoban}).
It is well-known that resultants may give extraneous factors (especially the repeated ones~\cite{repeated}), and we remove these factors using the weak membership test (see Definition~\ref{def:weak_membership}) for the ideal of interest.
In this section we would like to present a method allowing to remove these factors \emph{before} computing the resultant.
Although the method applies only to special cases, it turned out to be very powerful in the differential elimination context (see Table~\ref{table:change}).

\begin{example}[Simple motivating example]\label{ex:factor}
   Consider polynomials $f = ax + bc$ and $g = bx + ac$.
   The resultant $\Res_x(f, g)$ can be computed using the Sylvester matrix as follows:
   \[
     \begin{vmatrix}
        a & bc\\
        b & ac
     \end{vmatrix} = (a^2 - b^2)c = c(a - b) (a + b).
   \]
   Assume that the weak membership test says that $a + b$ is the polynomial in the ideal of interest while $a - b$ and $c$ are extraneous factors.
   Note that the factor $c$ is the gcd of the last column of the matrix, so it can be factored out before the determinant computation.
   Assume also that we know \emph{in advance} that there will be the factor $a - b$.
   If $a - b = 0$, then the common root of $f$ and $g$ will be $x = -c$. 
   Let us make a change of variables shifting this root to be zero, that is, $x \to x - c$:
   \[
       f = ax + bc \to ax + bc - ac, \quad g = bx + ac \to bx + ac - bc.
   \]
   Since such a linear shift does not change the resultant, we can use the Sylvester matrix of the new polynomials for the resultant computation. This will be
   \[
       \begin{vmatrix}
          a & (b - a)c \\
          b & (a - b)c
       \end{vmatrix} = (a - b)c  \begin{vmatrix}
          a & -1\\
          b & 1
       \end{vmatrix},
   \]
   so the extraneous factors are removed before the determinant computation.
\end{example}

For factoring out $c$ in Example~\ref{ex:factor}, we used a simple \emph{observation}: if we compute a resultant of two polynomials $f$ and $g$ using a matrix representation, then, before computing the determinant, we can factor out gcds of rows and columns.
Not all extraneous factors of the resultant can be eliminated this way (e.g., $a - b$ in Example~\ref{ex:factor}), and we propose a method to force some of them to appear as such gcd by making a variables change before constructing the matrix.
We formalize the transformation from Example~\ref{ex:factor} as a lemma:
\begin{lemma}\label{lem:var_change_general}
    Let $k$ be a field of characteristic zero.
    Let $f, g \in k[\mathbf{a}, x]$ and $A \in k[\mathbf{a}]$ be such that $A(\mathbf{a}) \mid \Res_x(f, g)$.
    Assume that there exist $B, C \in k[\mathbf{a}]$ such that $\gcd(A, C) = 1$ and the numerators of $f(\mathbf{a}, B / C)$ and $g(\mathbf{a}, B / C)$ are divisible by $A$ (that is, $B/C$ is a common root of $f$ and $g$ modulo $A$).
    Then each entry of the last column of the Sylvester matrix for
    \[
        C^{\deg_x f}f(\mathbf{a}, x - B/C) \quad \text{ and }\quad C^{\deg_x g}g(\mathbf{a}, x - B/C)
    \]
    is divisible by $A(\mathbf{a})$.
\end{lemma}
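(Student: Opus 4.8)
The plan is to reduce the statement to an inspection of the constant terms of the two shifted and denominator-cleared polynomials, because the last column of a Sylvester matrix records exactly these constant terms. Write $d_f := \deg_x f$ and $d_g := \deg_x g$, and set
\[
  P := C^{d_f} f(\mathbf{a}, x - B/C) \quad\text{and}\quad Q := C^{d_g} g(\mathbf{a}, x - B/C).
\]
In the Sylvester matrix of $P$ and $Q$ with respect to $x$, each row is a shift of the coefficient vector of $P$ or of $Q$, and the $k$-th column collects the coefficients of $x^{d_f + d_g - k}$. Hence the last column corresponds to the coefficient of $x^0$: its only possibly nonzero entries are the constant term $P(\mathbf{a}, 0)$, sitting in the unshifted $P$-row, and $Q(\mathbf{a}, 0)$, sitting in the unshifted $Q$-row, while every other entry is $0$ and thus trivially divisible by $A$. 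So it suffices to prove $A \mid P(\mathbf{a}, 0)$ and $A \mid Q(\mathbf{a}, 0)$.

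First I would check that $P$ and $Q$ really lie in $k[\mathbf{a}, x]$ and identify their constant terms. Expanding $f(\mathbf{a}, x - B/C) = \sum_{i=0}^{d_f} f_i(\mathbf{a})(x - B/C)^i$, the only denominators are powers of $C$ of order at most $d_f$, so the prefactor $C^{d_f}$ clears them and $P \in k[\mathbf{a}, x]$ (likewise for $Q$). Evaluating at $x = 0$ gives
\[
  P(\mathbf{a}, 0) = C^{d_f} f(\mathbf{a}, -B/C),
\]
which is precisely the denominator-cleared numerator of $f$ evaluated at the common root of $f$ and $g$ modulo $A$, i.e. at the point that the shift carries to the origin. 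Thus $P(\mathbf{a},0)$ is (a power of $C$ times) the value of $f$ at that root.

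It then remains to transfer the divisibility hypothesis. By assumption the numerator of $f$ at the common root is divisible by $A$; writing $f(\mathbf{a}, -B/C) = n/d$ in lowest terms with $d \mid C^{d_f}$ and $A \mid n$, we obtain $P(\mathbf{a}, 0) = (C^{d_f}/d)\, n$, so $A \mid P(\mathbf{a}, 0)$. The coprimality $\gcd(A, C) = 1$ guarantees that $C$ is invertible modulo $A$, so the common root is a well-defined class in $k[\mathbf{a}]/(A)$ and the auxiliary factor $C^{d_f}$ cannot spuriously create the divisibility by $A$; it is genuinely inherited from the root condition on $f$. The identical computation applied to $g$ yields $A \mid Q(\mathbf{a}, 0)$, and together with the vanishing of the remaining entries this proves the claim.

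I expect the Sylvester bookkeeping and the constant-term computation to be routine; the one point requiring care is this last step, namely tracking the clearing of denominators and using $\gcd(A, C) = 1$ so that the factor $A$ is carried from the numerator of $f$ (resp. $g$) at the common root over to $P(\mathbf{a}, 0)$ (resp. $Q(\mathbf{a}, 0)$), rather than arising artificially from the power of $C$.
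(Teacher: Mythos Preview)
Your proof is correct and follows exactly the paper's approach: the last column of the Sylvester matrix has as its only possibly nonzero entries the constant terms of the two shifted polynomials, and these constant terms are (up to a harmless $C$-power factor) the numerators of $f$ and $g$ evaluated at the common root, hence divisible by $A$. Your write-up is in fact more careful than the paper's two-line proof, which contains a slip (it says ``denominators'' where ``numerators'' is meant); you also make explicit the role of $\gcd(A,C)=1$, though as your own computation $P(\mathbf{a},0)=(C^{d_f}/d)\,n$ shows, once $A\mid n$ the divisibility of $P(\mathbf{a},0)$ follows without it.
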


\begin{proof}
    The last column contains only two nonzero entries: the constant terms of the considered polynomials.
    The constant terms of the new polynomials are divisible by the denominators of $f(\mathbf{a}, B / C)$ and $g(\mathbf{a}, B / C)$, respectively, which are in turn divisible by $A$.
\end{proof}

The way the situation described in Lemma~\ref{lem:var_change_general} occurs in our computation is described in the following proposition
\begin{proposition}\label{prop:var_change_special}
     Consider $f, g, h \in k[\mathbf{a}, x, y]$ such that $f$ is of the form $A(\mathbf{a}) xy + B(\mathbf{a}) x + C(\mathbf{a})y + D(\mathbf{x})$, and $\gcd(F, A) = 1$, where $F := A(\mathbf{a})D(\mathbf{a}) - B(\mathbf{a})C(\mathbf{a})$.
     Then
     \begin{enumerate}
         \item $F \mid \Res_{y}(\Res_{x}(f,g), \Res_{x}(f,h))$;
         \item the numerators of $\Res_{x}(f,h)$ and $\Res_{x}(f,g)$ evaluated at $y = B/A$ are divisible by $F$.
     \end{enumerate}
\end{proposition}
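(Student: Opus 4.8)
The plan is to reduce everything to the single algebraic identity
\[
  A f = (Ax + C)(Ay + B) + F,
\]
obtained by expanding the right-hand side, together with its companion $A(Cy + D) = C(Ay + B) + F$ for the $x$-free part of $f$. Read modulo $F$, these say that after multiplying by $A$ the polynomial $f$ splits as $(Ax+C)(Ay+B)$, so (inverting $A$, which is legitimate precisely because $\gcd(A,F)=1$) the fibre of $f$ over $\{F=0\}$ degenerates along the line $Ay+B=0$. Concretely, substituting the value $y=-B/A$ at which $\operatorname{lc}_x f = Ay+B$ vanishes gives $f|_{y=-B/A} = D - BC/A = F/A$, a constant in $x$ divisible by $F$. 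This is the geometric content driving both claims, and the only place the coprimality hypothesis $\gcd(F,A)=1$ is needed.

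First I would prove (2). Since $\deg_x f = 1$ with $\operatorname{lc}_x f = Ay+B$ and unique root $\alpha = -(Cy+D)/(Ay+B)$, the product formula for the resultant yields the polynomial identity $\Res_x(f,g) = (Ay+B)^{e_g}\,g(\alpha,y)$ with $e_g := \deg_x g$. Expanding $g$ in powers of $x$ and clearing denominators, the degree-$k$ term of $g$ contributes a factor $(Ay+B)^{e_g-k}$, so setting $y=-B/A$ kills every term except $k=e_g$, leaving $\operatorname{lc}_x g(-B/A)\cdot(-(Cy+D))^{e_g}|_{y=-B/A}$. Because $C(-B/A)+D = F/A$, this equals $\operatorname{lc}_x g(-B/A)\cdot(-F/A)^{e_g}$, whose numerator is divisible by $F^{e_g}$, in particular by $F$; the identical computation handles $h$. (The hypothesis $\deg_x g, \deg_x h \geq 1$ is genuinely required and holds in our application, where $g,h$ are projections involving the eliminated variable.)

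Then (1) follows by promoting this common root to a common factor modulo $F$. Using $A(Cy+D)\equiv C(Ay+B)$ to rewrite the surviving top term, every coefficient of $\Res_x(f,g)$ becomes divisible by $Ay+B$ after inverting $A$; clearing denominators gives a genuine identity $A^{N}\Res_x(f,g) = (Ay+B)P_1 + F\,P_2$ in $k[\mathbf{a}][y]$, and likewise $A^{M}\Res_x(f,h) = (Ay+B)Q_1 + F\,Q_2$. Reducing the Sylvester determinant of $A^{N}\Res_x(f,g)$ and $A^{M}\Res_x(f,h)$ modulo $F$, and invoking the universal multiplicativity $\Res_y(uv,w)=\Res_y(u,w)\Res_y(v,w)$ together with $\Res_y(Ay+B,Ay+B)=0$, the shared factor $Ay+B$ forces $\Res_y(A^{N}\Res_x(f,g),A^{M}\Res_x(f,h))\equiv 0 \pmod F$. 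Pulling out the powers of $A$ (units modulo $F$ since $\gcd(F,A)=1$) and using that $k[\mathbf{a}]$ is a UFD, I conclude $F \mid \Res_y(\Res_x(f,g),\Res_x(f,h))$.

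The routine parts are the two expansions and the bookkeeping of the exponents $e_g,e_h$. The step demanding the most care — and the main obstacle — is the passage to the quotient $k[\mathbf{a}]/(F)$, which need not be a domain: I must justify that reducing the Sylvester determinant modulo $F$ really computes the resultant of the reduced polynomials (guarding against $y$-degree drops at the leading coefficients), cancel the spurious powers of $A$ via $\gcd(F,A)=1$, and phrase ``common root $\Rightarrow$ divisibility by $F$'' as the robust statement about the explicit common factor $Ay+B$ rather than a mere shared zero. The factorization $Af=(Ax+C)(Ay+B)+F$ and the coprimality hypothesis are exactly what make these points tractable.
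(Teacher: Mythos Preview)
Your argument is correct and rests on the same key observation as the paper: the identity $Af = (Ax+C)(Ay+B) + F$ means that, modulo $F$ with $A$ inverted, $f$ splits and both inner resultants acquire the common factor $Ay+B$. The paper treats the two parts in the stated order and, for (1), evaluates at closed points of $\{F=0\}\cap\{A\neq 0\}$ over the algebraic closure and invokes Zariski density; you reverse the order and, for (1), work directly in the localization $(k[\mathbf{a}]/\langle F\rangle)_A$, finishing with universal multiplicativity of Sylvester resultants together with $\Res_y(Ay+B,Ay+B)=0$. Your route is a touch more algebraic and has the minor advantage that it yields divisibility by $F$ itself even when $F$ is not squarefree (the paper's density argument, read literally, only gives divisibility by the radical of $F$). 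You also correctly identify the sign --- the evaluation point in (2) is $y=-B/A$, not $B/A$ as printed --- and make the tacit hypothesis $\deg_x g,\deg_x h\geq 1$ explicit.
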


\begin{proof}
    To prove the first part, we consider any point $\mathbf{a}^\ast$ over the algebraic closure of $k$ such that $F(\mathbf{a}^\ast) = 0$ and $A(\mathbf{a}^\ast) \neq 0$.
    Then we can write
    \begin{equation}\label{eq:f_factor}
      f(\mathbf{a}^\ast, x, y) = \left(x + \frac{C(\mathbf{a}^\ast)}{A(\mathbf{a}^\ast)}\right) (A(\mathbf{a}^\ast) y + B(\mathbf{a}^\ast))
    \end{equation}
    Thus both $\Res_x(f, g)$ and $\Res_x(f, h)$ at $\mathbf{a}^\ast$ will be divisible by $A(\mathbf{a}^\ast) y + B(\mathbf{a}^\ast)$, so their resultant will vanish.
    Since $F(\mathbf{a}) = 0, A(\mathbf{a}) \neq 0$ is Zariski dense in $F(\mathbf{a}) = 0$, $F$ divides the resultant.
    
    To prove the second part, consider the factorization~\eqref{eq:f_factor} in the localization of $k[\mathbf{a}] / \langle F \rangle$ with respect to $A$ (which is well-defined since $\gcd(A, F) = 1$) with $\mathbf{a}^\ast$ being the image of $\mathbf{a}$ under the canonical projection.
    Since the images of $\Res_{x}(f,h)$ and $\Res_{x}(f,g)$ vanish at $B(\mathbf{a}^\ast) / A(\mathbf{a}^\ast)$, the numerators of their values at $B(\mathbf{a}) / A(\mathbf{a})$ belong to $\langle F\rangle$, so are divisible by~$F$.
\end{proof}

In order to explain the relation of Proposition~\ref{prop:var_change_special} to Algorithms~\ref{alg:project_ode} and~\ref{alg:socoban}, we observe that $\tilde{f}_1, \ldots, \tilde{f}_{j - 1}, \tilde{f}_{j + 1}, \ldots, \tilde{f}_n$ in the output of Algorithm~\ref{alg:socoban} are divisors of a resultant of some polynomial with $\tilde{f}_j$ with respect to $x_j$.
The later runs of Algorithm~\ref{alg:socoban} at~\ref{step:while} of Algorithm~\ref{alg:project_ode} will compute resultants of these polynomials and their derivatives, this yielding repeated resultants as in Proposition~\ref{prop:var_change_special}.
Therefore, if we see that $\tilde{f}_j$ is of the form as in Proposition~\ref{prop:var_change_special} with $x = x_j$ and $y = x_s$ for some $s$, we perform the change of variable $x_s$ as in Lemma~\ref{lem:var_change_general} (and its derivative is updated correspondingly).
Although, the further resultant computations will be performed not with $\tilde{f}_j$ but with factors of their derivatives (so the multiplicity of the common root may be lower), the efficiency gain of such a variable change is substantial as shown in Table~\ref{table:change} below.

\begin{table}[!htbp]
    \centering
    \begin{tabular}{|l|c|c|}
    \hline
        Model & Time without change & Time with change \\
    \hline
        SIWR model (Example~\ref{ex:SIWR}) & $26\,\text{s.}$ & $3\,\text{s.}$ \\
    \hline
        Pharmacokinetics (Example~\ref{ex:pharm}) & $93\,\text{s.}$ & $18\,\text{s.}$ \\
    \hline
        SEAIJRC model (Example~\ref{ex:seaijrc}) & $> 5\,\text{h.}$ & $29\,\text{s.}$ \\
    \hline
    \end{tabular}
    \caption{Efficiency gain by using the change of variables described in Section~\ref{sec:var_change}}
    \label{table:change}
\end{table}

\begin{remark}
In practice, most multiplicities of $A(\mathbf{a})D(\mathbf{a}) - B(\mathbf{a})C(\mathbf{a})$ in the resultant will be factored out from the Sylvester matrix, although there is no guarantee that all of them can be eliminated this way.
\end{remark}

\begin{remark}
    Similar but more complicated variable changes can be devised for other forms of $f$. Unfortunately, in practice, while these variable changes help eliminate some extraneous factors, they introduce others that are usually even more complicated. 
    This can already be seen in the $f = A(\mathbf{a})xy+B(\mathbf{a})x+C(\mathbf{a})y+D(\mathbf{a})$ case: while helping to eliminate the extraneous factor $A(\mathbf{a})D(\mathbf{a}) - B(\mathbf{a})C(\mathbf{a})$, the variable change actually introduces a new factor $A(\mathbf{a})$. Fortunately this new factor is simple enough so that the variable change is actually beneficial for the algorithm efficiency.
\end{remark}

\section{Assessing structural identifiability using the PB-representation}\label{sec:identifiability}

\subsection{Overview}\label{sec:ident_overview}
In this section we will describe how the algorithms for projection-based (PB) representation from Section~\ref{sec:pb_alogs} can be used to assess structural identifiability of the parameters (or some functions of them) in an ODE model~\eqref{eq:ODEmodel}.

The general idea of this computation follows the lines of the approach via input-output equations~\cite{OllivierPhD}.
We will give an outline of the algorithm using the notation introduced in Section~\ref{sec:identifiability_into}.
\begin{enumerate}[label = \textbf{(Step~\arabic*)}, leftmargin=*, align=left, labelsep=2pt, itemsep=0pt]
    \item\label{step:field_def} \emph{Find multi-experiment identifiable functions.} \cite[Theorem~21]{allident} implies that the field of all multi-experiment identifiable functions of a model $\Sigma$~\eqref{eq:ODEmodel} is equal to the field of definition of the following elimination ideal
    \[
        P := I_{\Sigma} \cap \C(\bm{\mu}) [\mathbf{y}^{(\infty)}, \mathbf{u}^{(\infty)}].
    \]
    We use Algorithm~\ref{alg:project_ode} to compute the projection-based representation of $P$, and then apply Algorithm~\ref{alg:field_def} to this representation and the weak membership oracle for $P$ provided by the original ODE system (see Algorithm~\ref{alg:weak_membership}) to compute polynomials $f, f_1, \ldots, f_n$ whose coefficients (after normalizing so that every nonzero polynomial has at least one coefficient that is one) generate the field of definition of $P$.
    \item \emph{Check: ME-identifiable = SE-identifiable ?} 
    \cite[Lemma~1]{ioaaecc} (together with~\cite[Theorem~21]{allident}) provides a sufficient condition for single-experiment identifiable functions to coincide with the multi-experiment ones.
    This condition yields the equality in many cases (for example, in all but one benchmarks we use in this paper).
    However, checking this criterion requires  computing the rank of certain Wronskian, and such algorithms (e.g., \cite[Remark~22]{allident}) have limited efficiency.
    They typically cannot compute the cases in which the order of the Wronskian reaches 100 and thus cannot be used in our case since the order of the Wronskian in the example considered in this paper reaches 2600 (e.g., for Example~\ref{ex:pharm}).
    We develop a fast algorithm for performing this computation described in Section~\ref{sec:wronskian}.
    
    \item \emph{Check identifiability.}
    We use Theorem~\ref{thm:sampling} to check whether a parameter or a given function of parameters belong to the field of definition of $P$ using the generators of this field computed at~\ref{step:field_def}.
\end{enumerate}

The rest of the section is organized as follows.
Section~\ref{sec:membership_field} contains the proof of Theorem~\ref{thm:sampling} which yields a probabilitic algorithm for field membership.
Section~\ref{sec:wronskian} describes an algorithm for checking whether multi-experiment and single-experiment identifiable functions coincide.
Finally, in Section~\ref{sec:identifiability_algo}, we give a complete algorithm for assessing structural identifiability detailing the above outline.


\subsection{Assessing field membership}\label{sec:membership_field}

In this section, we will prove Theorem~\ref{thm:sampling} and give a randomized Algorithm~\ref{alg:field_membership} for testing membership in rational function fields using it.

\begin{lemma}\label{lem:good_annihilator}
    Let $X \subset \mathbb{A}^{n + 1}$ be an irreducible algebraic variety over $\C$.
    We denote the coordinate functions in the ambient space by $x_1, \ldots, x_{n + 1}$.
    Assume that $x_{n + 1}$ is algebraic over $x_1, \ldots, x_n$ modulo $\I(X)$ of degree $d$.
    Then there exists a polynomial $P \in \C[x_1, \ldots, x_n]$ such that
    \begin{itemize}
        \item $P \in \I(X)$;
        \item $L \not\in \I(X)$, where $L$ is the leading coefficient of $P$ with respect to $x_{n + 1}$;
        \item $\deg_{x_{n + 1}}P = d$ and $\deg P \leqslant 2 \deg X$.
    \end{itemize}
\end{lemma}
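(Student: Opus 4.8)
Throughout write $\delta := \deg X$ and $r := \dim X$, and let $\pi\colon \mathbb{A}^{n+1}\to\mathbb{A}^n$ be the projection forgetting $x_{n+1}$, with $Z := \overline{\pi(X)}$. The hypothesis that $x_{n+1}$ is algebraic of degree $d$ over $x_1,\dots,x_n$ modulo $\I(X)$ says precisely that $\pi|_X\colon X\to Z$ is dominant and generically finite of degree $d$; in particular $\dim Z = r$, and a generic vertical fibre $\pi^{-1}(\mathbf{z})$ (for $\mathbf{z}\in Z$ generic) is a line meeting $X$ in exactly $d$ points, so $d\leqslant\delta$.

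The plan is first to produce $P$ with the three structural properties, and only afterwards to control its degree. Over the function field $\C(Z)$ the element $x_{n+1}$ has a monic minimal polynomial $m(T)=T^d+\sigma_{d-1}T^{d-1}+\dots+\sigma_0$ with $\sigma_i\in\C(Z)$. Choosing $L\in\C[x_1,\dots,x_n]$ that clears all the denominators of the $\sigma_i$ (so that each $L\sigma_i$ admits a polynomial representative modulo $\I(Z)$) with $L\notin\I(Z)$, I would set $P := L\,x_{n+1}^d+\sum_{i<d}(L\sigma_i)\,x_{n+1}^i$. Then $P\in\I(X)$ since it vanishes on $X$, $\deg_{x_{n+1}}P=d$ because $L\notin\I(Z)$ preserves the top term, and the leading coefficient $L$ lies outside $\I(Z)=\I(X)\cap\C[x_1,\dots,x_n]$, hence $L\notin\I(X)$. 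I would also record the rigidity used implicitly: any $P'\in\I(X)$ with $\deg_{x_{n+1}}P'=d$ and leading coefficient outside $\I(X)$ is a $\C(Z)$-multiple of $m$, since modulo $\I(X)$ it is a degree-$d$ relation for $x_{n+1}$ over $\C(Z)$. This fixes the shape of $P$ and leaves me free to optimise only the choice of denominator and of the polynomial representatives.

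The whole difficulty is the bound $\deg P\leqslant 2\delta$, and this is the step I expect to be the main obstacle. The $\sigma_i$ are, up to sign, the elementary symmetric functions of the $d$ fibre-values of $x_{n+1}$; since $x_{n+1}$ is regular on the affine part of $X$, the only poles of the $\sigma_i$ on $Z$ lie over a proper closed branch locus $B\subsetneq Z$ (the locus over which a fibre point escapes to infinity), and this $B$ has degree at most $\delta$. Using the degree theory for images of projections and for defining equations from \cite{Heintz1983} (the very ingredients already used in the proof of Proposition~\ref{prop:top_dim_comp}), I would take the common denominator $L$ to be a polynomial of degree at most $\delta$ vanishing on $B$ but not on all of $Z$. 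Each product $L\sigma_i$ is then a regular function on $Z$ of controlled growth, and the remaining task is to represent it by a polynomial of degree at most $2\delta$; the factor $2$ in $2\deg X$ arises exactly from the two separate contributions, degree $\leqslant\delta$ from the denominator $L$ and degree $\leqslant\delta$ from the numerator.

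To make the pole/divisor bookkeeping elementary I would, if needed, first cut $X$ and $Z$ by $r-1$ generic hyperplanes in the variables $x_1,\dots,x_n$: by Bertini this produces an irreducible curve $X'$ with $\pi\colon X'\to Z'$ still generically $d$-to-one and $\deg X'\leqslant\delta$, and since a generic linear restriction preserves the total degree of a polynomial, a bound obtained on the slice transfers back to $P$. The points I expect to require the most care are (i) extracting the clean constant $2$ rather than a dimension-dependent one, which forces the denominator and numerator degrees to be bounded \emph{separately} by $\delta$; (ii) ensuring throughout that the chosen $L$ does not vanish on $Z$, so that the leading coefficient of $P$ stays outside $\I(X)$ and $\deg_{x_{n+1}}P$ stays equal to $d$; and (iii) the Bertini-type genericity guaranteeing that the generic fibre degree remains $d$ after slicing.
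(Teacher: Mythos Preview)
Your plan diverges from the paper's argument, which does not touch minimal polynomials, pole divisors, or Bertini slicing at all. The paper instead invokes the sharp degree bounds for triangular sets due to Dahan--Schost~\cite[Theorem~2]{DS04}: after choosing a transcendence basis $x_1,\dots,x_r$ of $\C[X]$, one obtains a triangular set $P_{r+1},\dots,P_{n+1}$ with $\deg_{x_1,\dots,x_r} P_i\leqslant\deg X$, $\deg_{x_j}P_i<\deg_{x_j}P_j$ for $r<j<i$, and $\prod_{i>r}\deg_{x_i}P_i\leqslant\deg X$. Taking $P:=P_{n+1}$, the total degree is then bounded by $\deg X + d_{n+1}+(d_n-1)+\dots+(d_{r+1}-1)\leqslant\deg X+\prod d_i\leqslant 2\deg X$, and the triangular-set construction guarantees the leading coefficient condition. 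The constant $2$ falls out of this arithmetic directly; no slicing or pole-counting is needed.

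Your route has a genuine gap at exactly the place you flagged. Two of the three ``care'' points are in fact obstructions. First, the Bertini reduction does not transfer the bound in the direction you need: restricting a fixed $P$ to a generic linear slice preserves its degree, but a low-degree annihilator constructed \emph{on the slice} need not lift to a low-degree annihilator on $X$; the polynomial you build on $X'$ is only determined modulo the slicing hyperplanes, so the inequality runs the wrong way. Second, even granting $\deg L\leqslant\delta$, the assertion that each $L\sigma_i$ admits a polynomial representative of degree $\leqslant 2\delta$ is unsupported: a regular function on an affine variety $Z\subsetneq\mathbb{A}^n$ can require representatives of arbitrarily large ambient degree, and nothing in your pole bookkeeping controls this. (Also note a bookkeeping slip: if you only secure $\deg(L\sigma_i)\leqslant 2\delta$, then $\deg\bigl((L\sigma_i)x_{n+1}^i\bigr)\leqslant 2\delta+i$, which overshoots for $i\geqslant 1$.) Extracting a clean $2\deg X$ along these lines would essentially require reproving the Dahan--Schost bound; the paper simply cites it.
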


\begin{proof}
   Relabeling $x_1, \ldots, x_n$ if necessary, we will further assume that there exists $r \leqslant n$ such that $x_1, \ldots, x_r$ form a transcendence basis of $\C[x_1, \ldots, x_{n + 1}]$ modulo $\I(X)$.
   By~\cite[Theorem~2]{DS04} $X$ can be represented by a triangular set $P_{n + 1}, P_n, \ldots, P_{r + 1}$ with respect to the ordering $x_{n + 1} > \ldots > x_1$ such that, for every $r < i \leqslant n + 1$,
   \begin{itemize}
       \item $P_i \in \C[x_1, \ldots, x_i]$;
       \item the total degree of $P_i$ with respect to $x_1, \ldots, x_r$ does not exceed $\deg X$;
       \item for every $r < j < i$, we have $\deg_{x_j} P_i < \deg_{x_j} P_j$.
   \end{itemize}
   For every $r < i \leqslant n + 1$, we set $d_i := \deg_{x_i} P_i$.
   Due to the construction of this triangular set~\cite[Definition~2]{DS04} ($P_i$'s correspond to $N_i$'s in the notation of~\cite{DS04}), we have that 
   \begin{itemize}
       \item none of the leading coefficients of $P_{n + 1}, \ldots, P_{r + 1}$ belongs to $\I(X)$;
       \item the degrees $d_{n + 1}, \ldots, d_{r + 1}$ are the degrees of the polynomials in the lexicographic Gr\"obner basis with $x_{n + 1} > \ldots > x_{r + 1}$ of the localization of $\I(X)$ with respect to $\C[x_1, \ldots, x_r]$ (see~\cite[Assumption~1]{DS04}).
   \end{itemize}
   The latter implies that $d_{n + 1} = d$ and the degree of the variety defined by the localization is equal to $d_{n + 1}\ldots d_{r + 1}$.
   Therefore, $d_{n + 1}\ldots d_{r + 1} \leqslant \deg X$.
   Thus, we can bound the total degree of $P_{n + 1}$ as follows
   \[
     \deg P_{n + 1} \leqslant \deg X + d_{n + 1} + (d_n - 1) + \ldots + (d_{r + 1} - 1) \leqslant \deg X + \prod\limits_{i = r + 1}^{n + 1}d_i \leqslant 2\deg X.
   \]
   Hence, we can take $P = P_{n + 1}$.
\end{proof}


\begin{proposition}\label{prop:fiber_cardinality}
     Let $X \subset \mathbb{A}^{n + 1}$ be an irreducible algebraic variety over $\C$.
     Let $\pi\colon \mathbb{A}^{n + 1} \to \mathbb{A}^n$ be the projection to the first $n$ coordinates, and assume that the generic fiber of the restriction $\pi|_X$ is finite. 
     Then there exists a hypersurface $H \subset \mathbb{A}^{n + 1}$ of degree at most $4\deg X$ not containing $X$ with the following properties.
     \begin{enumerate}
         \item If the size of the generic fiber of $\pi|_X$ is one, then, for every $p \in X \setminus H$, we have 
         \[
           \I(X) + \I(\pi(p) \times \C) = \I(p).
         \]
         \item If the size of the generic fiber of $\pi|_X$ is finite but greater than $1$, then, for every $p \in X \setminus H$, we have $|\pi^{-1}(\pi(p)) \cap X| > 1$.
     \end{enumerate}
\end{proposition}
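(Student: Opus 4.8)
The plan is to build $H$ from a single annihilating polynomial and to read both claims off the restriction of that polynomial to the vertical lines $\pi^{-1}(q)$. First I would dispose of the degenerate situation: if the generic fiber of $\pi|_X$ is empty the hypotheses of both parts fail and any hypersurface not containing $X$ works, so I may assume the generic fiber is nonempty, whence $\pi|_X$ is dominant. Since $X$ is irreducible of dimension $n$ inside $\mathbb{A}^{n+1}$, it is then a hypersurface $V(F)$ with $F$ irreducible, $\I(X)=\langle F\rangle$, and $\deg_{x_{n+1}}F=d$ equal to the size of the generic fiber. Next I would apply Lemma~\ref{lem:good_annihilator} to get $P\in\I(X)$ with $\deg_{x_{n+1}}P=d$, leading coefficient $L:=\operatorname{lc}_{x_{n+1}}(P)\notin\I(X)$, and $\deg P\le 2\deg X$; since $P\in\langle F\rangle$ and $\deg_{x_{n+1}}P=\deg_{x_{n+1}}F$, necessarily $P=cF$ for some $c\in\C[x_1,\dots,x_n]$ with $L=c\cdot\operatorname{lc}_{x_{n+1}}(F)$.

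I would then set $H:=V\big(L\cdot \partial P/\partial x_{n+1}\big)$. This does not contain $X$: indeed $L\notin\I(X)$, while $\partial P/\partial x_{n+1}$ is nonzero of degree $d-1<d$ in $x_{n+1}$ with leading coefficient $dL\notin\I(X)$, so its membership in the prime $\I(X)$ would yield a dependence of $x_{n+1}$ of degree $<d$ and contradict minimality of $d$; hence the product avoids the prime $\I(X)$. For the degree, $\deg\big(L\cdot\partial P/\partial x_{n+1}\big)\le \deg L+(\deg P-1)\le 2\deg X+(2\deg X-1)<4\deg X$, which is exactly where the constant $4$ comes from.

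For part $1$, where $d=1$, I would write $P=L\,x_{n+1}-M$ with $L,M\in\C[x_1,\dots,x_n]$ and take $p=(q,p_{n+1})\in X\setminus H$, so $L(q)\neq 0$. The inclusion $\I(X)+\I(\pi(p)\times\C)\subseteq\I(p)$ is immediate since both summands vanish at $p$. For the reverse inclusion I would reduce $P$ modulo $\I(\pi(p)\times\C)=\langle x_1-q_1,\dots,x_n-q_n\rangle$ to obtain $L(q)x_{n+1}-M(q)$, which by $P(p)=0$ equals $L(q)(x_{n+1}-p_{n+1})$; since $L(q)\neq 0$ this places $x_{n+1}-p_{n+1}$ in the sum, so the sum contains every generator of $\I(p)$, giving equality.

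For part $2$, where $d\ge 2$, I would take $p=(q,p_{n+1})\in X\setminus H$, so $L(q)\neq 0$ and $(\partial P/\partial x_{n+1})(p)\neq 0$. Because $L(q)\neq 0$ forces $c(q)\neq 0$, the fiber is $\pi^{-1}(q)\cap X=\{(q,t):F(q,t)=0\}=\{(q,t):P(q,t)=0\}$, and $P(q,\cdot)$ has degree exactly $d\ge 2$; the conditions $P(q,p_{n+1})=0$ and $(\partial P/\partial x_{n+1})(q,p_{n+1})\neq 0$ make $p_{n+1}$ a simple root, so $P(q,\cdot)\neq L(q)(t-p_{n+1})^d$ and there is a second root $t_1\neq p_{n+1}$, yielding a second fiber point $(q,t_1)$ and $|\pi^{-1}(q)\cap X|>1$. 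The genuinely load-bearing point is this identification of the fiber with the zero set of $P(q,\cdot)$: a second root of $P(q,\cdot)$ produces a second point of $X$ only because $X$ is a hypersurface, so that no additional defining equations can exclude that root. This is why reducing to the hypersurface case is the crucial first step, with the remaining degree bookkeeping and the elementary reduction in part $1$ being routine.
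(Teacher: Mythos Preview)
Your proof is correct and, in fact, more elementary than the paper's. The key move that buys you this simplicity is the observation that once the generic fiber of $\pi|_X$ is finite and nonempty, $X$ has dimension $n$ in $\mathbb{A}^{n+1}$ and is therefore a hypersurface $V(F)$. With $\I(X)$ principal, the fiber $\pi^{-1}(q)\cap X$ is exactly the zero set of $F(q,\cdot)$ (equivalently of $P(q,\cdot)$ once $c(q)\neq 0$), so any extra root of the restricted polynomial automatically lies on $X$; no further argument is needed to promote a root to a point of the variety.

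The paper does not make this reduction explicit. For Part~1 it invokes an external lemma from~\cite{HOPY2020}, whereas you give a direct two-line computation reducing $P$ modulo the fiber ideal. For Part~2 the paper works in the coordinate ring $R=\C[a_1,\ldots,a_n]$, proves the saturation identity $\{q(X)\in R[X]:q(a_{n+1})=0\}=\langle p(X)\rangle:\ell^\infty$, and uses it to show that any root of the specialized polynomial extends to a homomorphism $R[a_{n+1}]\to\C$ and hence to a point of $X$. That argument would survive if $X$ were not a hypersurface (e.g., if the proposition were stated in some $\mathbb{A}^{n+m}$), which is presumably why the supporting Lemma~\ref{lem:good_annihilator} is written in that generality; but in the proposition as stated your hypersurface shortcut makes the saturation machinery unnecessary. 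You also unify the two cases under a single $H=V(L\cdot\partial P/\partial x_{n+1})$, while the paper builds separate hypersurfaces for $d=1$ and $d>1$. Your degree bookkeeping and the check that $H\not\supseteq X$ are fine.
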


\begin{proof}
   Let $d$ be the size of the generic fiber of $\pi|_X$. 
   We will give separate constructions for $H$ for the cases $d = 1$ and $d > 1$.
   
   If $d = 1$, then we apply the last part of~\cite[Lemma~4.3]{HOPY2020} with $X = X$ and $r = 1$, and obtain a desired hypersurface $H \subset \mathbb{A}^{n + 1}$.
   
   Now assume that $d > 1$. 
   We denote the coordinate functions by $x_1, \ldots, x_{n + 1}$.
   We apply Lemma~\ref{lem:good_annihilator} to $X$ and obtain a polynomial $P$ with the leading coefficient with respect to $x_{n + 1}$ denoted by $L$.
   We set $Q := L \cdot \frac{\partial P}{\partial x_{n + 1}}$.
   Note that $\deg Q < 2\deg P = 4\deg X$ and $Q$ does not vanish on $X$.
   We will prove that the hypersurface $H$ defined by $Q = 0$ satisfied the requirements of the proposition.
   We denote the images of $x_1, \ldots, x_{n + 1}$ in the ring of regular functions of $X$ (that is, in $\C[x_1, \ldots, x_{n + 1}] / \I(X)$) by $a_1, \ldots, a_{n + 1}$.
   Let $R := \C[a_1, \ldots, a_n]$.
   Then $a_{n + 1}$ is algebraic of degree $d$ over $R$.
   Consider a polynomial $p(X) := P(a_1, \ldots, a_n, X) \in R[X]$.
   Then $\deg p = d$ and $p(a_{n + 1}) = 0$.
   We define $\ell = L(a_1, \ldots, a_n) \in R$, the leading coefficient of $p$.
   Then \cite[Propostion~3.2]{Hubert2003} implies that
   \begin{equation}\label{eq:sat_repr}
     \{q(X) \in R[X] \mid q(a_{n + 1}) = 0\} = \langle p(X) \rangle \colon \ell^\infty.
   \end{equation}
   Consider a homomorphism $\varphi\colon R \to \C$ such that $\varphi(\ell) \neq 0$ and consider any root $\alpha$ of $\varphi(p)(X)$.
   \eqref{eq:sat_repr} implies that, for every $q(X) \in R[X]$ vanishing at $a_{n + 1}$, there exists $N$ such that $\ell^N q(X)$ is divisible by $p(X)$ over $R$.
   Therefore, $\alpha$ is also a root of $\varphi(q)(X)$.
   This implies that $\varphi$ can be extended to a homomorphism $R[a_{n + 1}] \to \C$ by setting $\varphi(a_{n + 1}) = \alpha$.
   In other words, $(\varphi(x_1), \ldots, \varphi(x_n), \alpha) \in X$.
   
   Now we consider any point $\mathbf{x}^\ast = (x_1^\ast, \ldots, x_{n + 1}^\ast)$ such that $Q(\mathbf{x}^\ast) \neq 0$.
   We define a homomorphism $\varphi\colon R \to \C$ by $\varphi(x_i) = x_i^\ast$ for every $1 \leqslant i \leqslant n$.
   $Q(\mathbf{x}^\ast) \neq 0$ implies $\varphi(\ell) \neq 0$ and $\varphi(p)'(x_{n + 1}^\ast) \neq 0$.
   Therefore, $\varphi(p)(X)$ is a polynomial of degree $> 1$ having at least one simple root, so it must have at least one extra root, let us call it $x_{n + 1}^\circ$.
   The previous paragraph implies that $(x_1^\ast, \ldots, x_n^\ast, x_{n + 1}^\circ) \in X$, so $|\pi^{-1}(\pi(\mathbf{x}^\ast)) \cap X| > 1$.
\end{proof}

\begin{lemma}[Geometric resolution]\label{lem:geom_resolution}
    Let $r_1(\mathbf{X}) = \frac{f_1(\mathbf{X})}{g(\mathbf{X})}, \ldots, r_N(\mathbf{X}) = \frac{f_N(\mathbf{X})}{g(\mathbf{X})} \in \mathbb{C}(\mathbf{X})$ generate a field $F$ over $\C$.
    Set $d := \max(1 + \deg g, \deg f_1, \ldots, \deg f_N)$.
    Then there exists an invertible matrix $M \in \C^{N \times N}$ such that rational functions $\tilde{r}_i := M(r_1, \ldots, r_N)^T$ have the following properties:
    \begin{enumerate}
        \item there exists $1 \leqslant \ell \leqslant N$ such that $\tilde{r}_1, \ldots, \tilde{r}_\ell$ form a $\C$-transcendence basis of $F$;
        \item $F = \C(\tilde{r}_1, \ldots, \tilde{r}_{\ell + 1})$, where $\ell$ is defined in the previous item;
        \item there exists a nonzero polynomial $q \in \C[T_1, \ldots, T_{\ell + 1}]$ with $\deg q  \leqslant d^{\ell + 1}$ such that $q(\tilde{r}_1, \ldots, \tilde{r}_{\ell + 1}) = 0$;
        \item there exist polynomials $w_{\ell + 1}, \ldots, w_{N} \in \C[T_1, \ldots, T_{\ell + 1}]$ such that 
        \[ 
        \tilde{r}_i = \frac{w_i(\tilde{r}_1, \ldots, \tilde{r}_{\ell + 1})}{\frac{\partial q}{\partial T_{\ell + 1}}(\tilde{r}_1, \ldots, \tilde{r}_{\ell + 1})}\quad \text{ for every }\ell + 1 < i \leqslant N.
        \]
    \end{enumerate}
\end{lemma}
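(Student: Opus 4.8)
The plan is to realize $F$ as the function field of a variety and to produce $M$ as a generic invertible linear change of coordinates. First I would set $\ell := \operatorname{trdeg}_{\mathbb{C}} F$ and consider the rational map $\phi\colon \mathbb{A}^n \dashrightarrow \mathbb{A}^N$, $\mathbf{X}\mapsto (r_1(\mathbf{X}),\ldots,r_N(\mathbf{X}))$, with $T_1,\ldots,T_N$ the coordinate functions on $\mathbb{A}^N$. Letting $V$ be the Zariski closure of the image, we have $T_i|_V = r_i$, so $\mathbb{C}(V)=\mathbb{C}(r_1,\ldots,r_N)=F$ and $\dim V = \ell$. A matrix $M$ simply replaces $(T_1,\ldots,T_N)$ by $\mathbb{C}$-linear combinations, so it carries $V$ to a linearly isomorphic variety $\widetilde V$ (same degree, same function field $F$) whose coordinate functions are exactly the $\tilde r_i$.

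Next I would fix a sufficiently generic $M$. Three conditions are each Zariski-open and nonempty, so (as $\mathbb{C}$ is infinite) a single $M$ meets all of them: (i) $\tilde r_1,\ldots,\tilde r_\ell$ are algebraically independent, hence form a transcendence basis of $F$ (this gives property 1, since a generic $\ell$-tuple of linear combinations of generators inherits algebraic independence); (ii) $\tilde r_{\ell+1}$ is a primitive element of the finite separable ($\operatorname{char}\mathbb{C}=0$) extension $F/\mathbb{C}(\tilde r_1,\ldots,\tilde r_\ell)$, so that $F=\mathbb{C}(\tilde r_1,\ldots,\tilde r_{\ell+1})$, which is property 2, by the primitive element theorem and genericity of a separating linear form; and (iii) the projection of $\widetilde V$ onto $(\tilde r_1,\ldots,\tilde r_\ell)$ is a Noether normalization, i.e. $\mathbb{C}[\widetilde V]$ is integral over $A:=\mathbb{C}[\tilde r_1,\ldots,\tilde r_\ell]$, which holds for generic linear forms over an infinite field. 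For the degree bound needed below I would argue $\deg V\le d^{\ell}$: for a generic codimension-$\ell$ affine subspace $\Lambda$, the finite set $V\cap\Lambda$ has cardinality $\deg V$, while its preimage $\phi^{-1}(\Lambda)$ is cut out in $\mathbb{A}^n$ (on $g\neq 0$) by $\ell$ equations $\sum_i c_{ik}f_i-e_kg=0$ of degree at most $\max(\deg f_i,\deg g)\le d$; each point of $V\cap\Lambda$ has an $(n-\ell)$-dimensional fibre that is a union of top-dimensional components of $\phi^{-1}(\Lambda)$, and distinct points give disjoint fibres, so $\deg V\le d^{\ell}$ by the Bézout inequality \cite[Theorem~1]{Heintz1983}.

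For property 3, the image $W$ of $\widetilde V$ under the projection to $(T_1,\ldots,T_{\ell+1})$ is irreducible with function field $\mathbb{C}(\tilde r_1,\ldots,\tilde r_{\ell+1})=F$ of transcendence degree $\ell$, hence an irreducible hypersurface in $\mathbb{A}^{\ell+1}$. Its vanishing ideal is principal (a height-one prime in the UFD $\mathbb{C}[T_1,\ldots,T_{\ell+1}]$), generated by an irreducible $q$ with $q(\tilde r_1,\ldots,\tilde r_{\ell+1})=0$. Since $W$ is a birational (by (ii)) linear projection of $\widetilde V$, we get $\deg q=\deg W\le \deg\widetilde V=\deg V\le d^{\ell}\le d^{\ell+1}$, which is property 3.

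The main work, and the step I expect to be the real obstacle, is property 4: obtaining the single common denominator $\frac{\partial q}{\partial T_{\ell+1}}$ with genuinely polynomial numerators. Here I would use that condition (iii) makes $\theta:=\tilde r_{\ell+1}$ integral over the normal ring $A$, so its minimal polynomial $m\in A[T_{\ell+1}]$ over $\operatorname{Frac}(A)$ is monic and, being irreducible and vanishing on $W$, equals $q$ up to a nonzero constant; in particular $A[\theta]=A[T_{\ell+1}]/(m)$ and $\frac{\partial q}{\partial T_{\ell+1}}(\tilde r_1,\ldots,\tilde r_{\ell+1})$ equals $m'(\theta)$ up to that constant. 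The classical computation of the different of a monogenic extension gives $m'(\theta)\,\mathcal{O}\subseteq A[\theta]$, where $\mathcal{O}$ is the integral closure of $A$ in $F$. Since each $\tilde r_i\in\mathbb{C}[\widetilde V]\subseteq\mathcal{O}$, there is $w_i\in A[T_{\ell+1}]=\mathbb{C}[T_1,\ldots,T_{\ell+1}]$ with $m'(\theta)\,\tilde r_i=w_i(\tilde r_1,\ldots,\tilde r_{\ell+1})$, i.e. $\tilde r_i=w_i(\tilde r_1,\ldots,\tilde r_{\ell+1})/\frac{\partial q}{\partial T_{\ell+1}}(\tilde r_1,\ldots,\tilde r_{\ell+1})$ after absorbing the constant into $w_i$. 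The delicate bookkeeping is exactly ensuring no spurious denominators in $\tilde r_1,\ldots,\tilde r_\ell$ survive, which is precisely what the Noether-position condition (making $\theta$ integral and $q$ monic in $T_{\ell+1}$ up to a constant) together with the different bound secures; equivalently, the same parametrization can be read off from the rational univariate representation via Lagrange interpolation at the conjugates of $\theta$.
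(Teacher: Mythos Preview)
Your argument is correct, and it takes a different, more self-contained route than the paper's. The paper simply invokes the geometric resolution of the prime ideal of relations among $r_1,\ldots,r_N$ from \cite[Proposition~3]{GLS}, which immediately yields the matrix $M$ together with $q$ and the $w_i$ satisfying properties 1, 2, and 4; only the degree bound on $q$ (property~3) is proved directly there, by realizing the hypersurface $\{q=0\}$ as a projection of the graph $\langle \tilde f_i - y_i g\rangle : g^\infty$ and applying \cite[Theorem~1]{Heintz1983} to its $\ell+1$ defining equations to get $\deg q\le d^{\ell+1}$. You instead unpack the geometric-resolution machinery by hand: a generic linear change gives a Noether position and a primitive element, and the conductor bound $m'(\theta)\,\mathcal{O}\subseteq A[\theta]$ then produces the common denominator $\partial q/\partial T_{\ell+1}$ for property~4. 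Your degree argument, intersecting the source with the pullback of a generic codimension-$\ell$ plane and counting fibers, is also different and yields the slightly sharper $\deg q\le d^{\ell}$; both bounds are adequate for the downstream application. The trade-off is that the paper's proof is a two-line citation plus one estimate, while yours is independent of \cite{GLS} but requires checking that the three genericity conditions (transcendence basis, primitive element, Noether normalization) are simultaneously open and nonempty, and that $q$ is (up to a scalar) the monic minimal polynomial so that the different computation applies---all of which you handle correctly.
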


\begin{proof}
    Consider an ideal
    \[
      I := \{p \in \C[T_1, \ldots, T_N] \mid p(r_1, \ldots, r_N) = 0\}.
    \]
    Since $I$ a prime ideal, it admits a geometric resolution due to~\cite[Proposition~3]{GLS} which yields the matrix $M$ and polynomials $q, w_{\ell + 2}, \ldots, w_N$ such that $q$ is the minimal polynomial of $\tilde{r}_{\ell + 1}$ over $\C(\tilde{r}_1, \ldots, \tilde{r}_{\ell})$.
    All the properties of $M$ and the polynomials stated in the lemma follow from~\cite[Proposition~3]{GLS} except for the degree bound for $q$ which we will now establish.
    We write $\tilde{r}_1 = \frac{\tilde{f}_1}{g}, \ldots, \tilde{r}_{\ell + 1} = \frac{\tilde{f}_N}{g}$ for $\tilde{f}_1, \ldots, \tilde{f}_N \in \C[\mathbf{X}]$.
    We introduce new variables $\mathbf{y} = (y_1, \ldots, y_{\ell + 1})$ and consider an ideal
    \[
      J := \langle \tilde{f}_1(\mathbf{X}, \mathbf{y}) - y_1g(\mathbf{X}), \ldots, \tilde{f}_{\ell + 1}(\mathbf{X}, \mathbf{y}) - y_{\ell + 1}g(\mathbf{X}) \rangle \colon g(\mathbf{X})^\infty \subset \C[\mathbf{X}, \mathbf{y}].
    \]
    The degree of the variety defined by $J$ is $\leqslant d^{\ell + 1}$ due to~\cite[Theorem~1]{Heintz1983}.
    The ideal $J$ is the vanishing ideal of the point $(\mathbf{X}, \tilde{r}_1, \ldots, \tilde{r}_{\ell + 1})$ over $\C$.
    Therefore, its projection on the $\mathbf{y}$-coordinates will be the ideal of relations between $\tilde{r}_1, \ldots, \tilde{r}_{\ell + 1}$ over $\C$ which is principal due to independence of $\tilde{r}_1, \ldots, \tilde{r}_{\ell}$ and generated by $q$. Therefore, the degree of $q$ is bounded by~$d^{\ell + 1}$.
\end{proof}

\begin{proof}[Proof of Theorem~\ref{thm:sampling}]
    We recall the following notation from the statement of the theorem
    \[
    I := \langle f_1(\mathbf{X}) g(\mathbf{a}) - f_1(\mathbf{a})g(\mathbf{X}), \ldots, f_N(\mathbf{X}) g(\mathbf{a}) - f_N(\mathbf{a})g(\mathbf{X}) \rangle \colon g(\mathbf{X})^{\infty} \subset \C[\mathbf{X}]
    \]
    and denote $F := \C\left( \frac{f_1(\mathbf{X})}{g(\mathbf{X})}, \ldots, \frac{f_N(\mathbf{X})}{g(\mathbf{X})} \right)$.

    We start with replacing $\frac{f_1}{g}, \ldots, \frac{f_N}{g}$ with the result of applying Lemma~\ref{lem:geom_resolution} to them.
    Note that this will not change the ideal $I$ and the field $F$.
    Let integer $\ell$ and polynomials $q, w_{\ell + 2}, \ldots, w_N \in \C[T_1, \ldots, T_{\ell + 1}]$ be as in the lemma.
    
    We introduce new variables $y_1, \ldots, y_{\ell + 1}, z$, and denote by $X$ the variety in $\mathbb{A}^{n + \ell + 2}$ defined by the ideal
    \begin{equation}\label{eq:defining_X}
      \langle f_1(\mathbf{X}) - y_1g(\mathbf{X}), \ldots, f_{\ell + 1}(\mathbf{X}) - y_{\ell + 1}g(\mathbf{X}), f(\mathbf{X}) - z g(\mathbf{X}) \rangle  \colon g(\mathbf{X})^\infty \subset \C[\mathbf{X}, \mathbf{y}, z].
    \end{equation}
    Denote the projection of $\mathbb{A}^{n + \ell + 2}$ to the $(\mathbf{y}, z)$-coordinates by $\pi$.
    Let $X_0 := \overline{\pi(X)}$.
    Denote the projection $(\mathbf{y}, z) \to \mathbf{y}$ by $\pi_0$.
    Note that $\frac{f(\mathbf{X})}{g(\mathbf{X})} \in F$ if and only if the generic fiber of $\pi_0|X_0$ is of cardinality one.
    If the generic fiber of $\pi_0|_{X_0}$ is finite, let $P_0$ be the defining polynomial of the hypersurface given by Proposition~\ref{prop:fiber_cardinality} applied to variety $X_0$ and projection $\pi_0$.
    Then 
    \begin{equation}\label{eq:deg_bound_P0}
      \deg P_0 \leqslant 4 \deg X_0 \leqslant 4\deg X.
    \end{equation}
    We apply~\cite[Lemma~4.4]{HOPY2020} to the variety $X$ and projection $\pi$, and denote the resulting subvariety of $X_0$ by $Y$, we have $\deg Y \leqslant \deg X$.
    
    We will now define a polynomial $Q \in \C[y_1, \ldots, y_{\ell + 1}, z]$ depending on the geometric situation such that 
    \begin{equation}\label{eq:Qprop}
    Q\left( \frac{f_1(\mathbf{a})}{g(\mathbf{a})}, \ldots, \frac{f_{r + 1}(\mathbf{a})}{g(\mathbf{a})}, \frac{f(\mathbf{a})}{g(\mathbf{a})} \right) \neq 0 \implies \left[ \frac{f(\mathbf{X})}{g(\mathbf{X})} \in F \iff f(\mathbf{X})g(\mathbf{a}) - f(\mathbf{a}) g(\mathbf{X}) \in I \right]
    \end{equation}
    \begin{itemize}
        \item \emph{If the cardinality of the generic fiber of $\pi_0|_{X_0}$ is one.}
        We set $Q := P_0$.
        \item \emph{If the cardinality of the generic fiber of $\pi_0|_{X_0}$ if greater than one.}
        We define polynomial $P_1 \in \C[y_1, \ldots, y_\ell]$ with $\deg P_1 \leqslant \deg X$ as described below and set $Q := qP_0P_1$ (where polynomial $q$ comes from Lemma~\ref{lem:geom_resolution}, geometric resolution, as explained in the beginning of the proof).
        \begin{itemize}
            \item \emph{If the generic fiber of $\pi_0|_{X_0}$ is finite.}
            Since $Y$ is a proper subvariety of $X_0$ and the generic fiber of $\pi_0|_{X_0}$ is finite, $\overline{\pi_0(Y)}$ is a proper subvariety of $\overline{\pi_0(X_0)}$.
            Since $\overline{\pi_0(Y)}$ has degree at most $\deg Y \leqslant \deg X$, it can be defined by polynomials of degree at most $\deg X$.
            Therefore, due to~\cite[Proposition~3]{Heintz1983}, there exists a polynomial $P_1 \in \C[\mathbf{y}]$ of degree at most $\deg X$ vanishing on $\overline{\pi_0(Y)}$ but not vanishing on $\overline{\pi_0(X_0)}$.
            \item\emph{If the generic fiber of $\pi_0|_{X_0}$ is infinite.}
            Since the dimension of a fiber of $\pi_0$ does not exceed one and the dimension of a fiber is upper semicontinuous~\cite[Theorem~14.8]{eisenbud}, each fiber of $\pi_0|_{X_0}$ is one-dimensional, and, consequently, a line.
            Since $\dim Y < \dim X_0$, either $\overline{\pi_0(Y)} \subsetneq \overline{\pi_0(X)}$ or the generic fiber of $\pi_0|_{Y}$ is finite.
            \begin{itemize}
              \item If $\overline{\pi_0(Y)} \subsetneq \overline{\pi_0(X)}$, let $P_1$ be a polynomial vanishing on $\overline{\pi_0(Y)}$ but not on $\overline{\pi_0(X_0)}$.
              Since $\deg \overline{\pi_0(Y)} \leqslant \deg Y$, \cite[Proposition~3]{Heintz1983} implies that $P_1$ can be chosen so that $\deg P_1 \leqslant \deg Y \leqslant \deg X$.
              \item If $\overline{\pi_0(Y)} = \overline{\pi_0(X)}$, then the generic fiber of $\pi_0|_Y$ is finite.
              We apply~\cite[Lemma~4.3(i)]{HOPY2020} with $X = Y$ and $\pi = \pi_0$, and denote the resulting subvariety of $\overline{\pi_0(Y)} = \overline{\pi_0(X)}$ by $Z$.
              By~\cite[Lemma~4.3(i)]{HOPY2020}, $\deg Z \leqslant \deg X$, so, by~\cite[Proposition~3]{Heintz1983}, we can choose $P_1$ to vanish on $Z$ and not vanish on $\overline{\pi_0(X)}$ such that $\deg P_1 \leqslant \deg Z \leqslant \deg X$.
            \end{itemize}
        \end{itemize}
        
    \end{itemize}
    
    Consider any point $\mathbf{a} \in \C^n$ such that
    \[
        Q\left( \frac{f_1(\mathbf{a})}{g(\mathbf{a})}, \ldots, \frac{f_{r + 1}(\mathbf{a})}{g(\mathbf{a})}, \frac{f(\mathbf{a})}{g(\mathbf{a})} \right) \neq 0 \text{ and is well-defined}.
    \]
    Again, consider cases:
    \begin{itemize}
            \item \emph{the cardinality of the generic fiber of $\pi_0|_{X_0}$ is one.}
            Then we have (by the choice of $P_0$)
            \[
              z - \frac{f(\mathbf{a})}{g(\mathbf{a})} \in \I(X_0) + \left\langle y_1 - \frac{f_1(\mathbf{a})}{g(\mathbf{a})}, \ldots, y_{r + 1} - \frac{f_{r + 1}(\mathbf{a})}{g(\mathbf{a})} \right\rangle.
            \]
            Therefore,
            \[
              f(\mathbf{X}) g(\mathbf{a}) - f(\mathbf{a}) g(\mathbf{X}) \in \I(X) + \left\langle y_1 - \frac{f_1(\mathbf{a})}{g(\mathbf{a})}, \ldots, y_{r + 1} - \frac{f_{r + 1}(\mathbf{a})}{g(\mathbf{a})} \right\rangle.
            \]
            Together with~\eqref{eq:defining_X} this yields
            \[
              f(\mathbf{X}) g(\mathbf{a}) - f(\mathbf{a}) g(\mathbf{X}) \in \langle f_1(\mathbf{X}) g(\mathbf{a}) - f_1(\mathbf{a}) g(\mathbf{X}),\ldots,  f_{r + 1}(\mathbf{X}) g(\mathbf{a}) - f_{r + 1}(\mathbf{a}) g(\mathbf{X}) \rangle \colon g(\mathbf{X})^\infty \subset I.
            \]
            \item \emph{the cardinality of the generic fiber of $\pi_0|_{X_0}$ is greater than one.}
            We will show that there exists $\alpha \in \C$ such that $\alpha \neq \frac{f(\mathbf{a})}{g(\mathbf{a})}$ and 
            \[
              p := \left( \frac{f_1(\mathbf{a})}{g(\mathbf{a})}, \ldots, \frac{f_{r + 1}(\mathbf{a})}{g(\mathbf{a})}, \alpha \right) \in X_0 \setminus Y.
            \]
            Consider cases:
            \begin{itemize}
                \item \emph{if the generic fiber $\pi_0|_{X_0}$ is finite,} then the choice of $P_0$ implies that there exists $\alpha \in \C$ such that $\alpha \neq \frac{f(\mathbf{a})}{g(\mathbf{a})}$ and $p := (\frac{f_1(\mathbf{a})}{g(\mathbf{a})}, \ldots, \frac{f_{r + 1}(\mathbf{a})}{g(\mathbf{a})}, \alpha) \in X_0$.
                Since $P_1\left( \frac{f_1(\mathbf{a})}{g(\mathbf{a})}, \ldots, \frac{f_{r + 1}(\mathbf{a})}{g(\mathbf{a})} \right) \neq 0$, we have $p \not\in Y$.
                
                \item \emph{if generic fiber of $\pi_0|_{X_0}$ is infinite,} the inequation $P_1\left( \frac{f_1(\mathbf{a})}{g(\mathbf{a})}, \ldots, \frac{f_{r + 1}(\mathbf{a})}{g(\mathbf{a})} \right) \neq 0$ implies that $\pi_0^{-1}(\frac{f_1(\mathbf{a})}{g(\mathbf{a})}, \ldots, \frac{f_{r + 1}(\mathbf{a})}{g(\mathbf{a})}) \cap Y$ is finite, so one can choose a point $p$ to avoid this finite set.
            \end{itemize}
            Since $p \in X_0 \setminus Y$, there exists $\mathbf{b} \in \C^n$ such that 
            \[
              p = \left( \frac{f_1(\mathbf{b})}{g(\mathbf{b})}, \ldots, \frac{f_{r + 1}(\mathbf{b})}{g(\mathbf{b})}, \frac{f(\mathbf{b})}{g(\mathbf{b})} \right).
            \]
            Since $q\left( \frac{f_1(\mathbf{a})}{g(\mathbf{a})}, \ldots, \frac{f_{\ell + 1}(\mathbf{a})}{g(\mathbf{a})} \right) \neq 0$, the values of $\frac{f_{\ell + 2}(\mathbf{a})}{g(\mathbf{a})}, \ldots, \frac{f_{N}(\mathbf{a})}{g(\mathbf{a})}$ are uniquely defined by $\frac{f_1(\mathbf{a})}{g(\mathbf{a})}, \ldots, \frac{f_{\ell + 1}(\mathbf{a})}{g(\mathbf{a})}$.
            Therefore, since $\frac{f_i(\mathbf{a})}{g(\mathbf{a})} = \frac{f_i(\mathbf{b})}{g(\mathbf{b})}$ for $1 \leqslant i \leqslant r + 1$, we have $\frac{f_i(\mathbf{a})}{g(\mathbf{a})} = \frac{f_i(\mathbf{b})}{g(\mathbf{b})}$ for $\ell + 1 < i \leqslant N$ as well.
            Since $f(\mathbf{X})g(\mathbf{a}) - f(\mathbf{a})g(\mathbf{X})$ and $g(\mathbf{X})$ do not vanish at $\mathbf{b}$, the polynomial $f(\mathbf{X})g(\mathbf{a}) - f(\mathbf{a})g(\mathbf{X})$ does not belong to $I$.
        \end{itemize}
        
        Since the conclusion of the implication~\eqref{eq:Qprop} implies both conclusions of the theorem, it is sufficient to prove that
        \[
          P\left( Q\left( \frac{f_1(\mathbf{a})}{g(\mathbf{a})}, \ldots, \frac{f_{\ell + 1}(\mathbf{a})}{g(\mathbf{a})} \right) \neq 0\;\text{ and well-defined} \right) \geqslant p.
        \]
        We start with bounding the degree of $Q$:
        \[
          \deg Q \leqslant \deg P_0 + \deg P_1 + \deg q \leqslant 5\deg X + \deg q \leqslant 5d^{\ell + 2} + d^{\ell + 1},
        \]
        where the last inequality follows from the Bezout bound~\cite[Theorem~1]{Heintz1983}.
        Then the degree of the denominator $Q_d(\mathbf{X})$ of $Q\left(\frac{f_1(\mathbf{X})}{g(\mathbf{X})}, \ldots, \frac{f_{\ell + 1}(\mathbf{X})}{g(\mathbf{X})} \right)$ does not exceed $5d^{\ell + 3} + d^{\ell + 2}$.
        Therefore,
        \[
          P\left( Q\left( \frac{f_1(\mathbf{a})}{g(\mathbf{a})}, \ldots, \frac{f_{\ell + 1}(\mathbf{a})}{g(\mathbf{a})} \right) \neq 0\;\text{ well-defined} \right) \geqslant P( Q_d(\mathbf{a}) g(\mathbf{a}) \neq 0) \geqslant 1 - \frac{5d^{\ell + 3} + d^{\ell + 2} + d}{M}.
        \]
        where the second inequality is due to the Demillo-Lipton-Schwartz-Zippel lemma~\cite[Proposition~98]{Zippel}.
        Since $d \geqslant 2$, the latter expression is greater than $p$.
\end{proof}

\begin{algorithm}[H]
\caption{Checking membership for rational function fields}\label{alg:field_membership}
\begin{description}[itemsep=0pt]
\item[Input ] rational functions $f, f_1, \ldots, f_N \in \C(\mathbf{x})$, where $\mathbf{x} = (x_1, \ldots, x_n)$, and a real number $0 < \varepsilon < 1$;
\item[Output ] \texttt{YES} if $f \in \C(f_1, \ldots, f_N)$ and \texttt{NO} otherwise.

The result is guaranteed to be correct with probability at least $1 - \varepsilon$.
\end{description}

\begin{enumerate}[label = \textbf{(Step~\arabic*)}, leftmargin=*, align=left, labelsep=2pt, itemsep=0pt]
    \item Write $f, f_1, \ldots, f_N$, where $f = \frac{F}{Q}$ and $f_i = \frac{F_i}{Q}$ for every $i = 1, \ldots, N$ and $Q, F, F_1, \ldots, F_N \in \C[x_1, \ldots, x_n]$.
    \item Compute $d := \max(\deg Q + 1, \deg F, \deg F_1, \ldots, \deg F_N)$ and $M := 6d^{n + 3} / \varepsilon$.
    \item Sample integers $\mathbf{a} = (a_1, \ldots, a_n)$ from $[0, M]$ uniformly at random.
    \item Set $I := \langle F_1(\mathbf{x}) Q(\mathbf{a}) - F_1(\mathbf{a})Q(\mathbf{x}), \ldots, F_N(\mathbf{x}) Q(\mathbf{a}) - F_N(\mathbf{a})Q(\mathbf{x}) \rangle \colon Q(\mathbf{x})^{\infty} \subset \C[\mathbf{x}]$.
    \item Use Gr\"obner bases to check $F(\mathbf{x})Q(\mathbf{a}) - F(\mathbf{a})Q(\mathbf{x}) \in I$. If yes, return \texttt{YES}, otherwise return \texttt{NO}.
\end{enumerate}
\end{algorithm}

\begin{lemma}
    The output of Algorithm~\ref{alg:field_membership} is correct with probability at least $1 - \varepsilon$.
\end{lemma}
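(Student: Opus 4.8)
The plan is to reduce the statement directly to Theorem~\ref{thm:sampling}, which already contains all of the geometric content; the remaining work is purely a matching of parameters together with a careful reading of the probabilistic statement. First I would line up the data of Algorithm~\ref{alg:field_membership} with the hypotheses of Theorem~\ref{thm:sampling} by letting the roles of the theorem's polynomials $f, g, f_1, \ldots, f_N$ be played by $F, Q, F_1, \ldots, F_N$, and by setting $p := 1 - \varepsilon$. With these identifications the quantity $d = \max(\deg Q + 1, \deg F, \deg F_1, \ldots, \deg F_N)$ computed in Step~2 is exactly the $d$ of the theorem, and $M = 6d^{n + 3}/\varepsilon = 6d^{n + 3}/(1 - p)$ is exactly the sampling bound of the theorem. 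The ideal $I$ formed in Step~4 is literally the ideal $I$ appearing in the theorem, and the field $\C(f_1, \ldots, f_N) = \C(F_1/Q, \ldots, F_N/Q)$ whose membership we test is exactly the field of the theorem. Since $\mathbf{a}$ is in both cases sampled uniformly from the integer points of $[0, M]$, Theorem~\ref{thm:sampling} applies with these substitutions.

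Next I would record the two deterministic observations linking the algorithm to the theorem: the algorithm outputs \texttt{YES} precisely when the tested polynomial $F(\mathbf{x})Q(\mathbf{a}) - F(\mathbf{a})Q(\mathbf{x})$ lies in $I$ (Step~5), while the proposition ``$f \in \C(f_1, \ldots, f_N)$'' is a fixed truth value that does not depend on the random point $\mathbf{a}$. Hence the only randomness lives in $\mathbf{a}$, and the algorithm errs exactly when the ideal-membership test disagrees with this fixed truth value.

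I would then split into the two cases. If $f \in \C(f_1, \ldots, f_N)$, the correct answer is \texttt{YES}, so an error occurs only when $F(\mathbf{x})Q(\mathbf{a}) - F(\mathbf{a})Q(\mathbf{x}) \notin I$; part~2 of Theorem~\ref{thm:sampling} asserts that in this situation $f \notin \C(f_1, \ldots, f_N)$ with probability at least $p$, so, since $f \in \C(f_1, \ldots, f_N)$ is assumed, this non-membership event has probability at most $1 - p = \varepsilon$. Symmetrically, if $f \notin \C(f_1, \ldots, f_N)$, the correct answer is \texttt{NO}, an error occurs only when $F(\mathbf{x})Q(\mathbf{a}) - F(\mathbf{a})Q(\mathbf{x}) \in I$, and part~1 of the theorem bounds the probability of this event by $1 - p = \varepsilon$. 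In either case the output is correct with probability at least $1 - \varepsilon$, which is the claim.

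All genuine content is already carried by Theorem~\ref{thm:sampling}; the only subtlety, and the step most prone to a slip, is the correct interpretation of its two parts. Because field membership is deterministic whereas $I$ depends on $\mathbf{a}$, the two parts translate precisely into the false-positive and false-negative bounds, each at most $1 - p$, and it is this translation rather than any new estimate that must be stated carefully. No genuinely hard step remains.
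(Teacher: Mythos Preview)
Your proposal is correct and follows exactly the approach of the paper, which proves this lemma in a single line: ``Follows directly from Theorem~\ref{thm:sampling}.'' Your version is simply a careful unpacking of that one-liner---matching the data of Algorithm~\ref{alg:field_membership} to the hypotheses of the theorem and reading off the two error bounds---so there is no substantive difference.
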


\begin{proof}
    Follows directly from Theorem~\ref{thm:sampling}.
\end{proof}


\subsection{Checking whether SE-identifiable = ME-identifiabile}\label{sec:wronskian}

We will use the notation from Section~\ref{sec:ident_overview}.
As described in \ref{step:field_def} of the approach outlined there, for a given model $\Sigma$~\eqref{eq:ODEmodel}, one can combine Algorithms~\ref{alg:project_ode} and~\ref{alg:field_def} in order to compute $f, f_1, \ldots, f_n$ such that the coefficients of $f, f_1, \ldots, f_n$ (after normalizing so that every nonzero polynomial has at least one coefficient is equal to one) generate the field of definition of the ideal of input-output relations, $I_\Sigma \cap \C(\bm{\mu})[\mathbf{y}^{(\infty)}, \mathbf{u}^{(\infty)}]$, which is equal to the field of all multi-experiment identifiable functions due to~\cite[Theorem~21]{allident}.
In this section, we will present an algorithm allowing one to conclude in some cases (in practice, very often) that the coefficients also generate the field of single-experiment identifiable functions.
The algorithm will be based on Lemma~\ref{lem:wronsk} below which, is a slight modification of~\cite[Lemma~1]{ioaaecc}.

\begin{definition}[Wronskian]
    Let $a_1, \ldots, a_m \in R$ be elements of a differential ring $R$. 
    Then we defined the \emph{Wronskian} of $a_1, \ldots, a_m$ to be the following matrix
    \[
    \wronsk(a_1, \ldots, a_m) := \begin{pmatrix}
    a_1 & \ldots & a_m\\
    a_1' & \ldots & a_m'\\
    \vdots & \ddots & \vdots \\
    a_1^{(m - 1)} & \ldots & a_m^{(m - 1)}
    \end{pmatrix}.
    \]
\end{definition}

\begin{lemma}\label{lem:wronsk}
    Let $\Sigma$ and $I_\Sigma$ be as above.
    Let $g \in I_\Sigma \cap \C(\bm{\mu})[\mathbf{y}^{(\infty)}, \mathbf{u}^{(\infty)}]$ be a nonzero element such that at least one of the coefficients of $g$ is $1$.
    Assume that $g$ is written as $g = z_0 + \sum\limits_{i = 1}^N a_i z_i$ such that
    $z_0, \ldots, z_N \in \C[\mathbf{y}^{(\infty)}, \mathbf{u}^{(\infty)}]$ and $a_1, \ldots, a_N \in \C(\bm{\mu})$.
    If the rank of the $\wronsk(z_0, \ldots, z_N)$ modulo $I_\Sigma$ is equal to $N$, then $a_1, \ldots, a_N$ are single-experiment identifiable.
\end{lemma}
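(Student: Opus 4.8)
The plan is to connect single-experiment identifiability of the coefficients $a_1, \ldots, a_N$ to the non-vanishing of the Wronskian determinant. The key idea is that identifiability means expressing each $a_i$ as a ratio of differential polynomials in $\mathbf{y}, \mathbf{u}$ modulo $I_\Sigma$, and the Wronskian provides a Cramer's-rule mechanism for solving such a system. Since $g \in I_\Sigma$, we have $z_0 + \sum_{i=1}^N a_i z_i \in I_\Sigma$. Because $I_\Sigma$ is a differential ideal, every derivative of $g$ also lies in $I_\Sigma$, and since the $a_i$ are constants (elements of $\C(\bm{\mu})$, so $a_i' = 0$), differentiating gives
\[
  z_0^{(k)} + \sum_{i=1}^N a_i z_i^{(k)} \in I_\Sigma \quad \text{for every } k \geqslant 0.
\]

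First I would write out the first $N$ of these relations (for $k = 0, 1, \ldots, N-1$) as a single linear system. Setting $\mathbf{a} = (a_1, \ldots, a_N)^T$, the relations say that the vector $(-z_0, -z_0', \ldots, -z_0^{(N-1)})^T$ equals $\wronsk(z_1, \ldots, z_N) \cdot \mathbf{a}$ modulo $I_\Sigma$, where here I use the submatrix of partial derivatives of $z_1, \ldots, z_N$. The hypothesis is stated in terms of $\wronsk(z_0, \ldots, z_N)$ having rank $N$ modulo $I_\Sigma$; the natural next step is to argue that this rank condition guarantees that some $N \times N$ minor built from the $z_i$-columns is non-zero modulo $I_\Sigma$, which is precisely what is needed to invert the system by Cramer's rule. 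I would apply Cramer's rule: for each $i$, $a_i = \Delta_i / \Delta$ modulo $I_\Sigma$, where $\Delta$ is the relevant non-vanishing $N\times N$ Wronskian minor and $\Delta_i$ is obtained by replacing the appropriate column. Both $\Delta$ and $\Delta_i$ are differential polynomials in $\C[\mathbf{y}^{(\infty)}, \mathbf{u}^{(\infty)}]$, and $\Delta \notin I_\Sigma$, so writing $P_1 := \Delta$ and $P_2 := \Delta_i$ gives $P_1 a_i - P_2 \in I_\Sigma$ with $P_1 \notin I_\Sigma$, which is exactly the definition of global SE-identifiability of $a_i$.

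The main obstacle I anticipate is the precise bookkeeping of \emph{which} Wronskian determinant appears and reconciling the rank condition on $\wronsk(z_0, \ldots, z_N)$ (an $(N+1)$-wide matrix, but with only $N$ rows as written — note the dimension mismatch in the statement that must be resolved) with the invertibility of the coefficient system. One must be careful that the matrix in the definition has $m$ rows for $m$ arguments, so $\wronsk(z_0, \ldots, z_N)$ is genuinely $(N+1) \times (N+1)$; the rank being $N$ modulo $I_\Sigma$ (rather than $N+1$) is the correct hypothesis, reflecting that the single linear relation $g \equiv 0$ already drops the rank by one. The delicate point is then showing that rank exactly $N$ forces the $N \times N$ minor obtained by deleting the $z_0$-column (and one row) to be non-vanishing modulo $I_\Sigma$; this requires checking that the dependency among the columns is governed by the constant vector $(1, a_1, \ldots, a_N)$ and that no further degeneracy occurs. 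Since $\C(\bm{\mu})$ consists of constants, the derivation annihilates all the $a_i$, so the column-dependency structure is rigid, and I would use this to pin down exactly which minor survives.

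Finally, I would remark that the same construction, applied simultaneously to all $i = 1, \ldots, N$ with a common denominator $\Delta$, shows all the $a_i$ are SE-identifiable by a single formula, matching the intended application where these $a_i$ are the coefficients of an input-output relation generating the field of definition. The termination and effectivity of computing the rank modulo $I_\Sigma$ is deferred to the accompanying Wronskian algorithm, so here I only need the existence statement, keeping the argument purely algebraic.
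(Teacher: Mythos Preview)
Your proposal is correct and is precisely the standard Cramer's-rule argument that the paper invokes: the paper's own proof is a one-line reference to \cite[Lemma~1]{ioaaecc}, observing that the argument there does not use that the $z_i$ are monomials, and your write-up spells out exactly that argument. Your handling of the ``which minor'' issue is also right: since the column relation has leading coefficient $1$, the $z_0$-column lies in the span of the others, so rank $N$ forces the $(N{+}1)\times N$ submatrix on $z_1,\ldots,z_N$ to have full column rank, and hence some $N\times N$ minor of it is nonzero modulo $I_\Sigma$.
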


\begin{proof}
    Since the proof of~\cite[Lemma~1]{ioaaecc} does not use the fact that $z_0, \ldots, z_N$ are monomials but only uses that they belong to $\C[\mathbf{y}^{(\infty)}, \mathbf{u}^{(\infty)}]$, the same argument proves this lemma.
\end{proof}

If one can verify that each of $f, f_1, \ldots, f_n$ satisfy the requirements of Lemma~\ref{lem:wronsk}, then one can conclude that the fields of single-experiment and multi-experiment functions coincide.
However, in the benchmarks we consider in this paper, the number $N$ in Lemma~\ref{lem:wronsk} can be as large as $2600$ (in Example~\ref{ex:pharm}).
For the problems of this size, the algorithms computing the derivatives of the outputs up to this order symbolically (such as in~\cite{allident,webapp}) are not efficient enough.
We propose Algorithm~\ref{alg:wronsk}, which, together with the enhancement from Remark~\ref{rem:massive_eval}, can compute the rank of a specialization of the Wronskian from Lemma~\ref{lem:wronsk} in several minutes for all the examples we consider.

\begin{algorithm}[H]
\caption{Checking single-experiment identifiability of coefficients}\label{alg:wronsk}
\begin{description}[itemsep=0pt]
\item[Input ] an ODE model~$\Sigma$ as in~\eqref{eq:ODEmodel} and a differential polynomial $g \in \C(\bm{\mu})[\mathbf{y}^{(\infty)}, \mathbf{u}^{(\infty)}] \cap I_{\Sigma}$ with at least one coefficient being one.
\item[Output ] \texttt{Single-experiment} if all the coefficients of $g$ are single-experiment identifiable or \texttt{Not sure} if single-experiment identifiability could not be concluded.
\end{description}

\begin{enumerate}[label = \textbf{(Step~\arabic*)}, leftmargin=*, align=left, labelsep=2pt, itemsep=0pt]
    \item Write $g = z_0 + \sum_{i = 1}^N a_i z_i$, where $z_0, \ldots, z_N \in \C[\mathbf{y}^{(\infty)}, \mathbf{u}^{(\infty)}]$ and $a_1, \ldots, a_N \in \C(\bm{\mu})$ such that $N$ is minimal possible using~\cite[Algorithm~3]{allident}.
    \item\label{step:sample_wrnsk} Let $h$ be the order of $g$, and pick a prime number $p > N$.
    \item Sample random $\bm{\mu}^\ast \in \mathbb{F}_p^\ell$, $\mathbf{x}^\ast \in \mathbb{F}_p^n$, and truncated up to order $N + h + 1$ power series $\mathbf{u}^\ast \in (\mathbb{F}_p[\![t]\!])^s$.
    \item Compute the truncated power series solution for~$\Sigma$ in $\mathbb{F}_p[\![t]\!]$ up to order $N + h + 1$ with the parameter value, initial conditions, and inputs being $\bm{\mu}^\ast$, $\mathbf{x}^{\ast}$, and $\mathbf{u}^\ast$, repectively (using the algorithm from~\cite{solutionsSODA}).
    If division by zero occurs, go to~\ref{step:sample_wrnsk} and choose a larger prime.
    \item\label{step:evaluation} Denote the $\mathbf{y}$-components of the computed solution by $\mathbf{y}^\ast$.
    Compute $g_1 = z_1(\mathbf{y}^\ast, \mathbf{u}^\ast), \ldots, g_N = z_N(\mathbf{y}^\ast, \mathbf{u}^\ast)$ (see Remark~\ref{rem:massive_eval}).
    \item Form an $N \times N$-matrix $M$ with the $i$-th column being the first $N$ Taylor coefficients in $g_i$ for $i = 1, \ldots, N$.
    \item If $M$ is nonsingular, return \texttt{Single-experiment}, otherwise return \texttt{Not sure}.
\end{enumerate}
\end{algorithm}

\begin{lemma}
    Algorithm~\ref{alg:wronsk} is correct.
\end{lemma}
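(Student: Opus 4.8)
The statement to prove is \emph{soundness}: whenever Algorithm~\ref{alg:wronsk} returns \texttt{Single-experiment}, every coefficient of $g$ is single-experiment identifiable (the \texttt{Not sure} branch asserts nothing and is vacuously correct). Since \texttt{Single-experiment} is returned exactly when the $N\times N$ matrix $M$ is nonsingular, the whole proof reduces to deducing the hypothesis of Lemma~\ref{lem:wronsk}---that $\wronsk(z_0,\ldots,z_N)$ has rank $N$ modulo $I_\Sigma$---from nonsingularity of $M$.

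First I would identify $M$ with a specialization of a submatrix of the Wronskian. Because $(\mathbf{y}^\ast,\mathbf{u}^\ast)$ is a power-series solution of $\Sigma$ over $\mathbb{F}_p$, evaluation at it is a differential homomorphism, so the formal derivative $z_i^{(k)}$ evaluates to $\tfrac{d^k}{dt^k}g_i$, whose value at $t=0$ equals $k!$ times the $k$-th Taylor coefficient of $g_i$. Since $k\leqslant N-1 < N < p$, each $k!$ is a unit in $\mathbb{F}_p$; hence, after scaling row $k$ by $1/k!$, the matrix $M$ is precisely the evaluation of the $N\times N$ submatrix $W'$ of $\wronsk(z_0,\ldots,z_N)$ obtained by deleting the column indexed by $z_0$ and the row of differentiation order $N$. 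The solution is computed to order $N+h+1$ exactly so that the first $N$ Taylor coefficients of each $g_i$, which depends on derivatives up to order $h$, are determined correctly.

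Next comes the rank count. Nonsingularity of $M$ means $\det W'$ does not vanish at the computed solution, so $\det W'\notin I_\Sigma$ and $\wronsk(z_0,\ldots,z_N)$ has rank at least $N$ modulo $I_\Sigma$. Conversely, the hypothesis $g=z_0+\sum_{i=1}^N a_i z_i\in I_\Sigma$ is a nontrivial constant-coefficient linear dependence among $z_0,\ldots,z_N$; because the $a_i$ are constants, differentiating this relation shows that the columns of the Wronskian satisfy the same dependence modulo $I_\Sigma$, so its rank is at most $N$. Thus the rank equals $N$, Lemma~\ref{lem:wronsk} applies, and $a_1,\ldots,a_N$ are single-experiment identifiable; together with the coefficient $1$ attached to $z_0$, all coefficients of $g$ are single-experiment identifiable, which is exactly what \texttt{Single-experiment} claims.

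The main obstacle is justifying the implication ``$M$ nonsingular $\Rightarrow \det W'\notin I_\Sigma$'', since $M$ lives over $\mathbb{F}_p$ after specializing $\bm{\mu}\mapsto\bm{\mu}^\ast$ whereas the Wronskian lives over $\C(\bm{\mu})$. I would argue by contraposition: if $\det W'\in I_\Sigma$, clear the $Q$-saturation to write $Q^m\det W' = \sum_\alpha c_\alpha\, D^\alpha(\mathrm{gen}_\alpha)$ as a finite combination of the generators of $I_\Sigma$ and their derivatives, then apply the evaluation homomorphism $\phi$ that reduces coefficients modulo $p$, sets $\bm{\mu}=\bm{\mu}^\ast$, and substitutes the computed solution. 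Since $\phi$ annihilates every defining relation of $\Sigma$, one gets $\phi(Q)^m\det\widehat{W'}=0$, and the division-by-zero safeguard of the algorithm guarantees $\phi(Q)\neq 0$, whence $\det\widehat{W'}=0$ and $M$ is singular. This reasoning is valid for every prime not dividing the finitely many denominators occurring in the $a_i$ and in the witness $\{c_\alpha\}$; verifying that such primes are harmless---so that a single numerical certificate $M$ rigorously forces full symbolic rank---is the delicate point, while the surrounding linear-algebra bookkeeping is routine.
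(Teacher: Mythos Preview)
Your argument follows essentially the same route as the paper's: identify $M$ (after scaling row $j$ by $(j-1)!$, using $p>N$) with the evaluation at $t=0$ of $\wronsk(g_1,\ldots,g_N)$, deduce that $\wronsk(z_1,\ldots,z_N)$ is nonsingular modulo $I_\Sigma$, and invoke Lemma~\ref{lem:wronsk}. You are in fact more thorough than the paper on two points: you explicitly argue that the rank of $\wronsk(z_0,\ldots,z_N)$ is \emph{exactly} $N$ (using the column relation coming from $g\in I_\Sigma$), whereas the paper only shows nonsingularity of the $N\times N$ minor; and you flag the characteristic-$p$ to characteristic-$0$ lift as the genuinely delicate step. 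The paper's proof simply asserts ``If the evaluation is nonsingular, then \ldots\ the Wronskian $\wronsk(z_1,\ldots,z_N)\pmod{I_\Sigma}$ [is] nonsingular, too'' without further justification, so the gap you identify is present there as well---your proposal is at least as complete as the paper's own proof.
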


\begin{proof}
    Let $\widetilde{M}$ be the matrix obtained from $M$ by multiplying the $j$-th row by $(j - 1)!$ for every $j = 1, \ldots, N$.
    Since $p > N$, $\det M \neq 0 \iff \det \widetilde{M} \neq 0$.
    We observe that $\widetilde{M}$ is the evaluation of the matrix $\wronsk(g_1, \ldots, g_N)$ at $t = 0$.
    If the evaluation is nonsingular, then the Wronskian as well as the Wronskian $\wronsk(z_1, \ldots, z_N) \pmod{I_\Sigma}$ are nonsingular, too.
    Therefore, the output value \texttt{Single-experiment} in this case is justified by Lemma~\ref{lem:wronsk}.
\end{proof}

\subsection{Algorithm for assessing identifiability}\label{sec:identifiability_algo}

\begin{algorithm}[H]
\caption{Assessing identifiability}\label{alg:main}
\begin{description}[itemsep=0pt]
\item[Input ] 
\begin{itemize}
    
    \item an ODE model~$\Sigma$ as in~\eqref{eq:ODEmodel} over $\mathbb{Q}$;
    \item a rational function $h(\bm{\mu}) \in \Q(\bm{\mu})$;
    \item a real number $0 < \varepsilon < 1$.
\end{itemize} 

\item[Output ]
\begin{itemize}
  \item One of
  \begin{itemize}
      \item \texttt{NO} if $h(\bm{\mu})$ is not multi-experiment identifiable;
      \item \texttt{Locally} if $h(\bm{\mu})$ is locally but not globally mutli-experiment identifiable;
      \item \texttt{Globally} if $h(\bm{\mu})$ is globally mutli-experiment identifiable
  \end{itemize}
  \item \texttt{True} if the result is also valid for single-experiment identifiability, and \texttt{False} if such a conclusion could not be made by the algorithm.
\end{itemize}
The result is guaranteed to be correct with probability at least $1 - \varepsilon$.
\end{description}

\begin{enumerate}[label = \textbf{(Step~\arabic*)}, leftmargin=*, align=left, labelsep=2pt, itemsep=0pt]
    \item\label{step:local} \emph{Assess local identifiability}
    \begin{enumerate}
        \item Use~\cite[Algorithm~1]{num_exp} (with probability of error at most $\varepsilon / 4$) to compute $r$ such that SE-idenitfiability in $\Sigma_r$ implies ME-identifiability.
        \item Use the algorithm from~\cite{Sedoglavic} (with probability of error at most $\varepsilon / 4$) to check whether $h(\bm{\mu})$ is $\Sigma_r$ locally-identifiable.
        If the answer is no, return (\texttt{NO}, \texttt{True}) if $r = 1$ or (\texttt{NO}, \texttt{False}) if $r > 1$.
    \end{enumerate}
    
    \item\label{step:io_proj} \emph{Compute input-output projections}
    \begin{enumerate}
        \item Apply Algorithm~\ref{alg:project_ode} to obtain the parametric profile $\mathbf{h}$ and the projections $\mathbf{f}$ for $I_\Sigma \cap \C(\bar{\mu})[\mathbf{y}^{(\infty)}, \mathbf{u}^{(\infty)}]$.
        \item Apply Algorithm~\ref{alg:field_def} to the projections $\mathbf{f}$ with the profile $\mathbf{h}$ and the weak membership test from Algorithm~\ref{alg:weak_membership} to obtain the polynomial $f$.
    \end{enumerate}
    
    \item\label{step:me_eq_se} \emph{SE-identifiable = ME-identifiable?}
    Apply Algorithm~\ref{alg:wronsk} to $\Sigma$ and each of $f, \mathbf{f}$ (after the normalization so that at least one coefficient is equal to one).
    If any of the results will be \texttt{Not sure}, set $F$ to be \texttt{False}, otherwise set it to be \texttt{True}.
    
    \item\label{step:check} \emph{Assess global identifiability}
    Let $c_1, \ldots, c_N$ be the coefficients of $f, \mathbf{f}$ after the normalization.
    Use Algorithm~\ref{alg:field_membership} (with probability of error at most $\varepsilon / 2$) to check whether $h \in \Q(c_1, \ldots, c_N)$.
    If the result is \texttt{YES}, return (\texttt{Globally}, F), otherwise return (\texttt{Locally}, F).
\end{enumerate}
\end{algorithm}

\begin{remark}[Simultaneous evaluation]\label{rem:massive_eval}
    The most computationally demanding step of Algorithm~\ref{alg:wronsk} is typically~\ref{step:evaluation}.
    This is because $N$ is often of order of hundreds or thousands, and $\mathbf{y}^\ast$ and $\mathbf{u}^\ast$ are truncated power series of order more than $N$, so performing many arithmetic operations with them (especially, multiplication) may take long time.
    
    The problem can be formulated in a general form as follows: given $N$ polynomials $q_1, \ldots, q_N \in R[x_1, \ldots, x_r]$ over ring $R$ and a tuple $\mathbf{a} \in R^r$, evaluate $q_1(\mathbf{a}), \ldots, q_N(\mathbf{a})$ in a way that would make the number of multiplications in $R$ smaller.
    We did this by first evaluating all the monomials appearing in $q_1, \ldots, q_N$ at $\mathbf{a}$, and then taking linear combinations of these results.
    For evaluating the monomials, we iterate through them in the ascending order with respect to degree and maintain a trie with the exponent vectors of already evaluated monomials.
    For each next monomial $m$, we find a monomial $m_0$ in the trie of largest degree such that $m_0 \mid m$ (this can be done in the linear time in the size of the trie), and reduce evaluation of $m$ to the evaluation of $m / m_0$.
    This approach leads to substantial speedup (e.g., 5 min. instead of 10 h. for Example~\ref{ex:pharm}).
\end{remark}


\begin{proposition}[Proof of Theorem~\ref{thm:ident}]
     The output of Algorithm~\ref{alg:main} is correct with probability at least $1 - \varepsilon$.
\end{proposition}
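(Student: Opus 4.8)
The plan is to treat Algorithm~\ref{alg:main} as a composition of several subroutines, classify each as either \emph{exact} (deterministic, or Las Vegas so that any returned answer is certainly correct) or \emph{randomized with one- or two-sided error}, and then bound the total failure probability by a union bound over the randomized pieces. Concretely, Steps~\ref{step:io_proj} and~\ref{step:me_eq_se} contribute no error: Algorithms~\ref{alg:project_ode} and~\ref{alg:field_def} terminate with probability one and, by Theorems~\ref{thm:project_ode} and~\ref{thm:field_def_ode} (whose proofs also show that the underlying weak membership oracle of Algorithm~\ref{alg:weak_membership} and the test of Algorithm~\ref{alg:high_dim_comp} never accept incorrectly), always return correct output upon termination; and Algorithm~\ref{alg:wronsk} is one-sidedly correct, since it outputs \texttt{Single-experiment} only when the Wronskian criterion of Lemma~\ref{lem:wronsk} genuinely certifies single-experiment identifiability and otherwise declares only \texttt{Not sure}. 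The remaining randomized calls are the computation of the replica number $r$ in Step~\ref{step:local}(a) (error $\leqslant \varepsilon/4$), the local-identifiability test in Step~\ref{step:local}(b) (error $\leqslant \varepsilon/4$), and the field-membership test of Step~\ref{step:check} via Algorithm~\ref{alg:field_membership} (error $\leqslant \varepsilon/2$).

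First I would establish correctness of the multi-experiment verdict conditioned on these three randomized calls succeeding. By the choice of $r$ from~\cite{num_exp}, local single-experiment identifiability in $\Sigma_r$ coincides with local multi-experiment identifiability in $\Sigma$; hence if Step~\ref{step:local}(b) reports that $h$ is not locally identifiable in $\Sigma_r$, then $h$ is neither locally nor globally multi-experiment identifiable and the output \texttt{NO} is correct. Otherwise $h$ is locally multi-experiment identifiable, and it remains to separate the global case from the merely local one. By Theorem~\ref{thm:field_def_ode} together with~\cite[Theorem~21]{allident}, the normalized coefficients $c_1, \ldots, c_N$ of $f, \mathbf{f}$ generate exactly the field of multi-experiment identifiable functions, so $h$ is globally multi-experiment identifiable if and only if $h \in \Q(c_1, \ldots, c_N)$. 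Thus, whenever Algorithm~\ref{alg:field_membership} correctly decides this membership, the verdict \texttt{Globally}/\texttt{Locally} of Step~\ref{step:check} is correct.

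Next I would justify the single-experiment flag. If Step~\ref{step:local}(a) returns $r = 1$, then $\Sigma_1 = \Sigma$, so the classes of single- and multi-experiment identifiable functions coincide and every multi-experiment verdict is automatically valid for a single experiment; this is exactly why Step~\ref{step:local}(b) returns \texttt{True} precisely when $r = 1$. In the general case the flag is set to \texttt{True} only when Algorithm~\ref{alg:wronsk} certifies the Wronskian condition of Lemma~\ref{lem:wronsk} for each of $f, \mathbf{f}$; by that lemma this certifies that all generators $c_1, \ldots, c_N$ are single-experiment identifiable, so the field of single-experiment identifiable functions also equals $\Q(c_1, \ldots, c_N)$ and the membership verdict is simultaneously valid for single- and multi-experiment identifiability. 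Since Algorithm~\ref{alg:wronsk} is one-sidedly correct, the flag never contributes an error.

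Finally I would assemble the error budget. Conditioning on the error-free exact steps, the only failure events come from the three Monte Carlo calls, with probabilities at most $\varepsilon/4$, $\varepsilon/4$, and $\varepsilon/2$; a union bound over whichever of these calls is actually executed on a given run bounds the total failure probability by $\varepsilon/4 + \varepsilon/4 + \varepsilon/2 = \varepsilon$ (when Step~\ref{step:local} returns early, only the first two are executed and the bound is smaller still). Hence Algorithm~\ref{alg:main} is correct with probability at least $1 - \varepsilon$. The main point requiring care is not any single probability estimate but the global bookkeeping: verifying that the replica number $r$ from~\cite{num_exp} genuinely transfers the local test in $\Sigma_r$ into a statement about $\Sigma$, that $\Q(c_1, \ldots, c_N)$ is the \emph{full} field of multi-experiment identifiable functions rather than merely a subfield, and that the early-return path together with the one-sided Wronskian flag introduce no failure mode outside the allotted $\varepsilon/4 + \varepsilon/4 + \varepsilon/2$ budget.
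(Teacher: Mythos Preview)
Your proposal is correct and follows essentially the same approach as the paper: a union bound over the three Monte Carlo subroutines (the computation of $r$, the local test in $\Sigma_r$, and the field-membership test), together with the observation that Steps~\ref{step:io_proj} and~\ref{step:me_eq_se} are Las Vegas or one-sided and contribute no error. The paper's own proof is a three-sentence sketch of the same bookkeeping; you have simply made the argument explicit, in particular justifying the flag $F$ in both the early-return and Wronskian-certified cases, whereas the paper defers this entirely to the specification of Algorithm~\ref{alg:wronsk}.
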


\begin{proof}
    If the function $h(\bm{\mu})$ is not identifiable, this will be detected at~\ref{step:local}.
    Moreover, this will be done correctly with probability at least $1 - 2 \varepsilon /4 = 1 - \varepsilon / 2$.
    
    If the function $h(\bm{\mu})$ is locally identifiable, then the cases ``globally'' and ``locally not globally'' will be distinguished by~\ref{step:io_proj} and~\ref{step:check}.
    Summing up the probability of errors, we see that the result will be correct with probability at least $1 - \varepsilon$.
    Finally, the returned value of $F$ will satisfy the specification due to the specification of Algorithm~\ref{alg:wronsk}.
\end{proof}


\section{Implementation and performance}\label{sec:performance}

We have implemented Algorithm~\ref{alg:main} (and all the algorithms it relies on) in {\sc Julia} language as a part of {\sc StructuralIdentifiability} package.
The package is publicly available at~\url{https://github.com/SciML/StructuralIdentifiability.jl} as a part of the SciML ecosystem\footnote{\url{https://sciml.ai/}}.
We use the symbolic computation libraries {\sc Nemo}~\cite{Nemo}, {\sc AbstractAlgebra.jl}, {\sc Singular.jl}, and {\sc GroebnerBases.jl}.
In this section we will demonstrate the performance of our implementation (version 0.2) on a set of benchmark problems.
The problems and their parameters are summarized in Table~\ref{tab:used_benchmarks} below and the explicit equations for most of them are collected in the Appendix.
All the timing in this paper are CPU times (not elapsed) measured on a laptop with Mac OS, 16 Gb RAM, and 4 cores 1.60GHz each.
The benchmark models in the formats of all the used software tools are available at~\url{https://github.com/SciML/StructuralIdentifiability.jl/tree/master/benchmarking}.

The rest of the section is structured as follows. 
In Section~\ref{sec:our_algo} we report and discuss the runtimes of the main steps of the algorithms on the benchmark problems.
Section~\ref{sec:comp_ident} contains the comparison of the performance of our implementation with several state-of-the art software tools for assessing global structural parameter identifiability, DAISY~\cite{DAISY}, SIAN~\cite{SIAN}, COMBOS~\cite{COMBOS}, and GenSSI~\cite{LFCBBCH}.
Finally, in Section~\ref{sec:comp_elim}, we compare the performance of the elimination algorithm we use (Algorithm~\ref{alg:project_ode}) for ODE models with general-purpose differential elimination algorithms (Rosenfeld-Gr\"obner and Differential Thomas from {\sc Maple}).

\begin{table}[H]
    \centering
    \resizebox{\textwidth}{!}{%
    \begin{tabular}{|l|c|c|c|c|c|}
    \hline
        \textbf{Name} & \textbf{Equations} & \textbf{states} & \textbf{params} & \textbf{outputs} & \textbf{inputs}\\
    \hline
    \hline
        SIWR model (Example~\ref{ex:SIWR}) & \eqref{bench:SIWR_main} \& \eqref{bench:SIWR_one} & $4$ & $7$ & $1$ & $0$ \\
    \hline
        SIWR model - 2 (Example~\ref{ex:SIWR}) & \eqref{bench:SIWR_main} \& \eqref{bench:SIWR_two} & $4$ & $7$ & $2$ & $0$ \\
    \hline
        Pharmacokinetics (Example~\ref{ex:pharm}) & \eqref{eq:Pharm} & $4$ & $7$ & $1$ & $0$ \\
    \hline
        MAPK pathway - 1 (Example~\ref{ex:mapk}) & \eqref{eq:MAPK_main} \& \eqref{eq:MAPK_out6} & $12$ & $22$ & $6$ & $0$ \\
    \hline
        MAPK pathway - 2 (Example~\ref{ex:mapk}) & \eqref{eq:MAPK_main} \& \eqref{eq:MAPK_out5} & $12$ & $22$ & $5$ & $0$ \\
    \hline
        MAPK pathway - 3 (Example~\ref{ex:mapk}) & \eqref{eq:MAPK_main} \& \eqref{eq:MAPK_out5bis} & $12$ & $22$ & $5$ & $0$ \\
    \hline
        SEAIJRC model (Example~\ref{ex:seaijrc}) & \eqref{eq:SEAIJRC} & $6$ & $8$ & $2$ & $0$ \\
    \hline
        Goodwin oscillator (Example~\ref{ex:Goodwin}) & \eqref{eq:Goodwin} & $4$ & $7$ & $1$ & $0$ \\
    \hline
        Akt pathway (Example~\ref{ex:Akt}) & \eqref{eq:Akt} \& \eqref{eq:Akt_out} & $9$ & $16$ & $3$ & $1$ \\
    \hline
        NF$\kappa$B (\cite[Section B.5]{SIAN}) & \cite[Section B.5]{SIAN} & $15$ & $13$ & $6$ & $1$ \\
    \hline
        Mass-action (\cite[Section B.1]{SIAN}) & \cite[Section B.1]{SIAN} & $6$ & $6$ & $2$ & $0$ \\
    \hline
        SIRS w. forcing (\cite[Section B.3]{SIAN}) & \cite[Section B.1]{SIAN} & $5$ & $6$ & $2$ & $0$ \\
    \hline
        
    \end{tabular}}
    \caption{Summary of the used benchmark problems}
    \label{tab:used_benchmarks}
\end{table}


\subsection{Performance and its discussion} \label{sec:our_algo}

For each of the benchmark models, Table~\ref{tab:our_performance} below contains
\begin{itemize}
    \item the runtime of our implementation with a breakdown for the steps of Algorithms~\ref{alg:main};
    \item indication whether the computed projection-based representation was actually a characteristic set of the elimination ideal as described in Remark~\ref{rem:charsets};
    \item whether or not Algorithm~\ref{alg:wronsk} could conclude that the coefficients of the computed input-output equations are single-experiment identifiable.
\end{itemize}

\begin{table}[H]
    \centering
    \resizebox{\textwidth}{!}{%
    \begin{tabular}{|l|c|c|c|c|c|c|c|}
    \hline
        \multirow{2}{*}{\textbf{Name}} & \multicolumn{5}{c|}{\textbf{Runtimes (sec.)}} & \rot{\multirow{2}{*}{\textbf{Char. set?}}} \hspace{1mm} & \rot{\multirow{2}{*}{\textbf{SE=ME?}}} \hspace{1mm}\\
        \cline{2-6} 
        & \ref{step:local} & \ref{step:io_proj} & \ref{step:me_eq_se} & \ref{step:check} & \textbf{Total} & & \\
    \hline
    \hline
        SIWR model (Example~\ref{ex:SIWR}) & $0.1$ & $3$ & $9.5$ & $5.3$ & $18$ & \OK & \OK \\
    \hline
        SIWR model - 2 (Example~\ref{ex:SIWR}) & $0.1$ & $0.2$ & $0.2$ & $0.2$ & $0.7$ & \OK & \OK \\
    \hline
    Pharmacokinetics (Example~\ref{ex:pharm}) & $0.1$ & $18.4$ & $344.4$ & $43$ & $406$ & \OK & \OK \\
    \hline
        MAPK pathway - 1 (Example~\ref{ex:mapk}) & $14.2$ & $2.5$ & $11.4$ & $11.4$ & $39.5$ & \OK & \OK \\
    \hline
        MAPK pathway - 2 (Example~\ref{ex:mapk}) & $4.9$ & $27.6$ & $14.7$ & $11$ & $58$ & \OK & \OK \\
    \hline
        MAPK pathway - 3 (Example~\ref{ex:mapk}) & $32.8$  & $397$ & $226$ & $429$ & $1084$ & \OK & \NOK \\
    \hline
        SEAIJRC model (Example~\ref{ex:seaijrc}) & $0.1$ & $28.6$ & $78$ & $25.2$ & $131.3$ & \OK & \OK \\
    \hline
        Goodwin oscillator (Example~\ref{ex:Goodwin}) & $<0.1$ & $<0.1$ & $<0.1$ & $<0.1$ & $0.2$ & \OK & \OK \\
    \hline
        Akt pathway (Example~\ref{ex:Akt}) & $1.5$ & $0.2$ & $2.3$ & $1$ & $5$ & \OK & \OK \\
    \hline
        NF$\kappa$B (\cite[Section B.5]{SIAN}) & $2$ & $> 5 \text{h.}$ & N/A & N/A & $> 5\text{h.}$ & N/A & N/A \\
    \hline
        Mass-action (\cite[Section B.1]{SIAN}) & $<0.1$ & $<0.1$ & $0.3$ & $<0.1$ & $0.5$ & \OK & \OK \\
    \hline
        SIRS w. forcing (\cite[Section B.3]{SIAN}) & $<0.1$ & $1$ & $28.2$ & $1$ & $30.3$ & \OK & \OK \\
    \hline
        
    \end{tabular}}
    \caption{Details on the performance of the implemented algorithm}
    \label{tab:our_performance}
\end{table}

We would like to make the following observations:
\begin{enumerate}
    \item For the majority of the models, computing input-output equations (i.e., \ref{step:io_proj}) is not the most time-consuming step (unlike, for example, DAISY~\cite{DAISY}).
    \item In all the benchmark models the computed input-output projections form a characteristic set of the elimination ideal as described in Remark~\ref{rem:charsets}.
    \item In all but one case (in Example~\ref{ex:mapk}), the algorithm was able to conclude that the coefficients of the input-output equations are SE-identifiable.
    To the best of our knowledge, this is the first case of a practically relevant model without constant states exhibiting such a behavior.
    Using SIAN~\cite{SIAN} we have verified that all the parameters are in fact SE-identifiable.
    
\end{enumerate}


\subsection{Comparison with identifiability software} \label{sec:comp_ident}
In this section we compare the performance of our implementation of Algorithm~\ref{alg:main} with popular software packages for assessing local and global structural parameter identifiability:
\begin{itemize}
    \item {\sc SIAN}: software writte in {\sc Maple}~\cite{SIAN}.
    The algorithm uses an improved version of the Taylor series approach (for details, see~\cite{HOPY2020}) and can assess single-experiment identifiability of both parameters and initial conditions.
    The algorithm is a randomized Monte-Carlo algorithm, and we were running it with the default probability $99\%$.
    We used version 1.5 of SIAN with {\sc Maple 2021}.
    The reported runtimes were measured using the \texttt{CPUTime} function in {\sc Maple}.
    
    \item {\sc DAISY}: package written in {\sc Reduce}~\cite{DAISY}.
    It uses the approach via input-output equations but does not check that their coefficients are SE-identifiable~\cite[Example~2.14]{HOPY2020}.
    The algorithm is randomized but no probability bound is provided.
    We used DAISY 2.0 and {\sc Reduce rev. 4859}.
    The reported runtimes were measured using the \texttt{Showtime} function in {\sc Reduce}.
    
    \item {\sc COMBOS}: web-application described in~\cite{COMBOS}.
    Similarly to DAISY, it uses input-output equations but does not check that their coefficients are SE-identifiable and does not provide a probability bound.
    
    \item {\sc GenSSI 2.0}: package written in {\sc Matlab}~\cite{LFCBBCH}. 
    It uses the generating series approach.
\end{itemize}

Table~\ref{tab:comparison_identifiability} contains the timings of our implementation, DAISY, and SIAN on the set of benchmarks.
We did not include GenSSI and COMBOS into the table because, for all the example, the computation either did not finish in 5 hours or returned an error (\emph{``Warning: Unable to find explicit solution''} for GenSSI and \emph{``Model may have been entered incorrectly or cannot be solved with COMBOS algorithms''} for COMBOS).

From the table, one can see that the performance of our algorithm compares favorably to the state-of-the-art software and performs identifiability analysis for systems that were previously out of reach.

\subsection{Comparison with general-purpose differential elimination algorithms} \label{sec:comp_elim}
In this section we compare the performance of Algorithm~\ref{alg:project_ode} (which is used as a subroutine in the main Algorithm~\ref{alg:main}) for computing a PB-representation of the ideal of input-output relations of an ODE system to two state-of-the-art general purpose {\sc Maple} packages for differential elimination: {\sc DifferentialAlgebra} (built on top of the {\sc BLAD} library) and {\sc DifferentialThomas} representing the result of elimination via a characteristic set and simple subsystems, respectively.
Note that, for all the considered benchmarks, the PB-representation is actually a characteristic set (see Remark~\ref{rem:charsets}).
We used {\sc Maple 2021} and all the reported runtimes were measured using the \texttt{CPUTime} function.
Table~\ref{tab:comparison_elimination} contains the runtimes.
One can see that many models which are out of reach for general-purpose algorithms can be tackled by our dedicated algorithm.

\begin{table}[H]
    \centering
    \begin{tabular}{|l|r|r|r|r|}
    \hline
        \textbf{Model} & {\sc DAISY} & {\sc SIAN} & Algorithm~\ref{alg:main} (our) \\
    \hline
    \hline
        SIWR model (Example~\ref{ex:SIWR}) & OOM & $> 5\,\text{h.}$ & $18\,\text{s.}$\\
    \hline
        SIWR model - 2 (Example~\ref{ex:SIWR}) & OOM & $213\,\text{s.}$ & $0.7\,\text{s.}$\\
    \hline
    Pharmacokinetics (Example~\ref{ex:pharm}) & $> 5\,\text{h.}$ & $> 5\,\text{h.}$ & $406\,\text{s.}$ \\
    \hline
        MAPK pathway - 1 (Example~\ref{ex:mapk}) & OOM & $31\,\text{s.}$ & $39.5\,\text{s.}$ \\
    \hline
        MAPK pathway - 2 (Example~\ref{ex:mapk}) & $> 5\,\text{h.}$ & $> 5\,\text{h.}$ & $58\,\text{s.}$ \\
    \hline
        MAPK pathway - 3 (Example~\ref{ex:mapk}) & $> 5\,\text{h.}$ & $35\,\text{s.}$ & $1084\,\text{s.}$ \\
    \hline
        SEAIJRC model (Example~\ref{ex:seaijrc}) & OOM & $> 5\,\text{h.}$ & $131.3\,\text{s.}$\\
    \hline
        Goodwin oscillator (Example~\ref{ex:Goodwin}) & $18\,\text{s.}^*$ & $4920\,\text{s.}$ & $0.2\,\text{s.}$ \\
    \hline
        Akt pathway (Example~\ref{ex:Akt}) & $182\,\text{s.}$ & $28\,\text{s.}$ & $5\,\text{s.}$ \\
    \hline
        NF$\kappa$B (\cite[Section B.5]{SIAN}) & $> 5\,\text{h.}$ & $2018\,\text{s.}$ & $> 5\,\text{h.}$\\
    \hline
        Mass-action (\cite[Section B.1]{SIAN}) & $> 5\,\text{h.}$ & $3\,\text{s.}$ & $0.5\,\text{s.}$\\
    \hline
        SIRS w. forcing (\cite[Section B.3]{SIAN}) & OOM & $5\,\text{s.}$ & $30.3\,\text{s.}$\\
    \hline
        
    \end{tabular}
    \caption{Comparison with other identifiability software}
    {\small OOM: ``out of memory'';\hspace{3mm}
    $\ast$: the output is different from the one by {\sc SIAN} and Algorithm~\ref{alg:main} }
    \label{tab:comparison_identifiability}
\end{table}


\begin{table}[H]
    \centering
    \resizebox{\textwidth}{!}{%
    \begin{tabular}{|l|r|r|r|}
    \hline
        \textbf{Model} & {\sc DiffAlgebra} & {\sc DiffThomas} & Algorithm~\ref{alg:project_ode} (our) \\
    \hline
    \hline
        SIWR model (Example~\ref{ex:SIWR}) & $> 5\,\text{h.}$ & $> 5\,\text{h.}$ & $3\,\text{s.}$ \\
    \hline
        SIWR model - 2 (Example~\ref{ex:SIWR}) & $> 5\,\text{h.}$ & $> 5\,\text{h.}$ & $0.2\,\text{s.}$ \\
    \hline
    Pharmacokinetics (Example~\ref{ex:pharm}) & OOM & $> 5\,\text{h.}$ & $18.4\,\text{s.}$ \\
    \hline
        MAPK pathway - 1 (Example~\ref{ex:mapk}) & $13\,\text{s.}$ & $7\,\text{s.}$ & $2.5\,\text{s.}$ \\
    \hline
        MAPK pathway - 2 (Example~\ref{ex:mapk}) & $> 5\,\text{h.}$ & $> 5\,\text{h.}$ & $27.6\,\text{s.}$ \\
    \hline
        MAPK pathway - 3 (Example~\ref{ex:mapk}) & $> 5\,\text{h.}$ & $> 5\,\text{h.}$ & $397\,\text{s.}$ \\
    \hline
        SEAIJRC model (Example~\ref{ex:seaijrc}) & $> 5\,\text{h.}$ & $> 5\,\text{h.}$ & $28.6\,\text{s.}$ \\
    \hline
        Goodwin oscillator (Example~\ref{ex:Goodwin}) & $0.2\,\text{s.}$ &  $0.4\,\text{s.}$ & $< 0.1\,\text{s.}$ \\
    \hline
        Akt pathway (Example~\ref{ex:Akt}) & $0.2\,\text{s.}$ & $> 5\,\text{h.}$ & $0.2\,\text{s.}$ \\
    \hline
        NF$\kappa$B (\cite[Section B.5]{SIAN}) & $> 5\,\text{h.}$ & $> 5\,\text{h.}$ & $> 5\,\text{h.}$ \\
    \hline
        Mass-action (\cite[Section B.1]{SIAN}) & $4.7\,\text{s.}$ & $> 5\,\text{h.}$ & $<0.1\,\text{s.}$\\
    \hline
        SIRS w. forcing (\cite[Section B.3]{SIAN}) & $> 5\,\text{h.}$  & $> 5\,\text{h.}$ & $1\,\text{s.}$\\
    \hline
        
    \end{tabular}}
    \caption{Comparison with general purpose libraries for differential elimination}
    OOM: ``out of memory''
    \label{tab:comparison_elimination}
\end{table}


\subsection*{Acknowledgements}

The authors are grateful to Emilie Dufresne, Hoon Hong and Joris van der Hoeven for helpful discussions.
The authors are grateful to the referees for careful reading, helpful comments and suggestions including streamlining the proofs of Proposition~\ref{prop:top_dim_comp} and Lemma~\ref{lem:repres}.
GP was partially supported by NSF grants DMS-1853482, DMS-1760448, and  DMS-1853650, by the Paris Ile-de-France region, and by the MIMOSA project funded by AAP INS2I CNRS.
HAH gratefully acknowledges a Royal Society University Research Fellowship UF150238 and RGF\textbackslash EA\textbackslash 201074 and partial support by EPSRC EP/R005125/1 and EP/T001968/1.


\bibliographystyle{siamplain}
\bibliography{references}


\section*{Appendix: Benchmark models used in Section~\ref{sec:performance}}

In the appendix, we collect some of the models used as benchmarks in the paper.
We did not include some relatively long models already explicitly formulated in the literature.

\begin{example}[SIWR model]\label{ex:SIWR}
  The following extension of the classical SIR model was proposed by Lee et al~\cite[Eq. (3)]{Cholera} to model Cholera:
  \begin{equation}\label{bench:SIWR_main}
  \begin{cases}
    \dot{s} & = \mu - \beta_I s i - \beta_W s w - \mu s + \alpha r, \\ 
    \dot{i} & = \beta_W s w + \beta_I s i - \gamma i - \mu i, \\ 
    \dot{w} & = \xi (i - w), \\ 
    \dot{r} & = \gamma i - \mu r - \alpha r,
   \end{cases}
  \end{equation}
  where the state variables $s$, $i$, and $r$ denote the fractions of the population that are
  susceptible, infectious, and recovered, respectively.  The variable $w$ represents the concentration of the bacteria in the environment.
  The following output was considered in the original paper by Lee et al~\cite{Cholera}
  \begin{equation}\label{bench:SIWR_one}
    y_1 = \kappa i.
  \end{equation}
  In~\cite{HOPY2020}, the following extended set of outputs was used
  \begin{equation}\label{bench:SIWR_two}
    y_1 = \kappa i \quad \text{ and }\quad y_2 = s + i + r.
  \end{equation}
\end{example}

\begin{example}[Pharmacokinetics]\label{ex:pharm}
This model was first presented by Demignot and Domurado in~\cite{Pharm}, and its identifiability was studied in~\cite[Case study 2]{comparison}:
  \begin{equation}\label{eq:Pharm}
  \begin{cases}
    \dot x_1 = a (x_2 - x_1) - \frac{k_a V_m x_1}{k_c k_a + k_c x_3 + k_a x_1},\\
    \dot x_2 = a (x_1 - x_2),\\
    \dot x_3 = b_1(x_4 - x_3) - \frac{k_cV_m x_3}{k_c k_a + k_c x_3 + k_a x_1},\\
    \dot x_4 = b_2(x_3 - x_4),\\
    y = x_1.
  \end{cases}  
  \end{equation}
  Its simplified version has been used in~\cite[Section B.6]{SIAN} and \cite[Example 6.4]{HOPY2020}.
\end{example}

\begin{example}[MAPK pathway]\label{ex:mapk}
  The following model describing mitogen-activated protein kinase (MAPK) was presented and analuzed by Manrai and Gunawardenahas~\cite{manrai2008geometry}
  \begin{equation}\label{eq:MAPK_main}
  \begin{cases}
      KS_{00}' = -a_{00} K \cdot S_{00} + b_{00} KS_{00} + \gamma_{0100} FS_{01} + \gamma_{1000} FS_{10} + \gamma_{1100} FS_{11},\\
      KS_{01}' = -a_{01} K\cdot S_{01} + b_{01}  KS_{01} + c_{0001} KS_{00} - \alpha_{01} F \cdot S_{01} + \beta_{01} FS_{01} + \gamma_{1101} FS_{11},\\
      KS_{10}' = -a_{10} K\cdot S_{10} + b_{10} KS_{10} + c_{0010} KS_{00} - \alpha_{10} F \cdot S_{10} + \beta_{10} FS_{10} + \gamma_{1110} FS_{11},\\
      FS_{01}' = -\alpha_{11} F\cdot S_{11} + \beta_{11} FS_{11} + c_{0111} KS_{01} + c_{1011} KS_{10} + c_{0011} KS_{00},\\
      FS_{10}' = a_{00} K\cdot S_{00} - (b_{00} + c_{0001} + c_{0010} + c_{0011}) KS_{00},\\
      FS_{11}' = a_{01} \cdot S_{01} - (b_{01} + c_{0111}) KS_{01},\\
      K' = a_{10} K\cdot S_{10} - (b_{10} + c_{1011}) KS_{10},\\
      F' = \alpha_{01} F\cdot S_{01} - (\beta_{01} + \gamma_{0100}) FS_{01},\\
      S_{00}' = \alpha_{10} F\cdot S_{10} - (\beta_{10} + \gamma_{1000}) FS_{10},\\
      S_{01}' = \alpha_{11} F\cdot S_{11} - (\beta_{11} + \gamma_{1101} + \gamma_{1110} + \gamma_{1100})  FS_{11},\\
      S_{10}' = -a_{00} K\cdot S_{00} + (b_{00} + c_{0001} + c_{0010} + c_{0011}) KS_{00} - a_{01} K\cdot S_{01} +\\ \hspace{10mm} (b_{01} + c_{0111}) KS_{01} - a_{10} K\cdot S_{10} + (b_{10} + c_{1011}) KS_{10},\\
      S_{11}' = -\alpha_{01} F\cdot  S_{01} + (\beta_{01} + \gamma_{0100}) FS_{01} - \alpha_{10} F\cdot S_{10} + (\beta_{10} + \gamma_{1000}) FS_{10} - \alpha_{11} F \cdot S_{11} + \\
      \hspace{10mm}(\beta_{11} + \gamma_{1101} + \gamma{1110} + \gamma{1100}) FS_{11}.
  \end{cases}
  \end{equation}
  We will consider three sets of outputs: the one from~\cite[Example 24]{BHM19}:
  \begin{equation}\label{eq:MAPK_out6}
      y_1 = K, \quad y_2 = F,\quad y_3 = S_{00},\quad y_4 = S_{01}, \quad y_5 = S_{10}, \quad y_6 = S_{11},
  \end{equation}
  a restricted version of~\eqref{eq:MAPK_out6}
  \begin{equation}\label{eq:MAPK_out5}
      y_1 = F,\quad y_2 = S_{00},\quad y_3 = S_{01}, \quad y_4 = S_{10},\quad y_5 = S_{11},
  \end{equation}
  and a ``symmetrized'' version of~\eqref{eq:MAPK_out6}
  \begin{equation}\label{eq:MAPK_out5bis}
      y_1 = K, \quad y_2 = F,\quad y_3 = S_{00},\quad y_4 = S_{01} + S_{10}, \quad y_5 = S_{11}.
  \end{equation}
\end{example}

\begin{example}[SEAIJRC model]\label{ex:seaijrc}
  The following extension of the SIR model was originally designed for studying infuenza, and its identifiability was studied by Roosa and Chowell~\cite[Model~2]{RC19} and Massonis et al~\cite[Model~41]{MBV20}:
  \begin{equation}\label{eq:SEAIJRC}
  \begin{cases}
      S' = -\beta S (I + J + q A) / N,\\
      E' = \beta S (I + J + q A) / N - k E,\\
      A' = k (1 - \rho) E - \gamma_1 A,\\
      I' = k \rho E - (\alpha + \gamma_1) I,\\
      J' = \alpha I - \gamma_2 J,\\
      C' = \alpha I,\\
      R' = \gamma_1(A + I) + \gamma_2 J,
  \end{cases}
  \end{equation}
  where $N = S + E + A + I + J + C + R$ is a known constant.
  Since $R$ does not appear in the right-hand sides and not present in the output (presented below), we  omit it.
  We use the observable $y_1 = C$ proposed in~\cite{MBV20} and add $y_2 = N$ to indicate that the total population is known.
\end{example}

\begin{example}[Goodwin oscillator]\label{ex:Goodwin}
The following model describes the oscillations in enzyme kinetics~\cite{Goodwin} (see also~\cite[Case study~1]{comparison}):
\begin{equation}\label{eq:Goodwin_orig}
  \begin{cases}
    \dot{x}_1 = -bx_1  + \frac{a}{A + x_3^\sigma},\\
    \dot{x}_2 = \alpha x_1 - \beta x_2,\\
    \dot{x}_3 = \gamma x_2 - \delta x_3,\\
    y_1 = x_1.
  \end{cases}
\end{equation}
The system is not polynomial but the following way of polynomializing without increasing the dimension of the parameter space has been suggested in~\cite[Supplementary materials~A.3]{SIAN}: we introduce a new parameter $c$ and a new state variable $x_4$ defined by  $c = \frac{A}{a}$ and  $x_4 = \frac{x_3^\sigma}{a}$, respectively.
Then the system~\eqref{eq:Goodwin_orig} becomes
\begin{equation}\label{eq:Goodwin}
\begin{cases}
    \dot{x}_1 = -bx_1  + \frac{1}{c + x_4},\\
    \dot{x}_2 = \alpha x_1 - \beta x_2,\\
    \dot{x}_3 = \gamma x_2 - \delta x_3,\\
    \dot{x}_4 = \sigma x_4 \frac{\gamma x_2 - \delta x_3}{x_3},\\
    y_1 = x_1.
\end{cases}
\end{equation}
\end{example}

\begin{example}[Akt pathway]\label{ex:Akt}
The following model for Akt pathway was developed by Fujita et al~\cite{Fujita2010}:
\begin{equation}\label{eq:Akt}
\begin{cases}
  x_1' = (k_{12} - k_{11}) x_9 + a(u - x_1),\\
  x_2' = k_{91} x_9 - k_{41} x_2 + k_{22} x_3 + k_{31} x_3 - k_{21} x_2 x_4,\\
  x_3' = k_{21} x_2 x_4 - k_{31} x_3 - k_{22} x_3,\\
  x_4' = k_{71} x_5 + k_{22} x_3 - k_{21} x_2 x_4,\\
  x_5' = k_{52} x_7 - k_{71} x_5 + k_{61} x_7 + k_{31} x_3 - k_{51} x_5 x_6,\\
  x_6' = k_{52} x_7 + k_{81} x_8 - k_{51} x_5 x_6,\\
  x_7' = k_{51} x_5 x_6 - k_{61} x_7 - k_{52}x_7,\\
  x_8' = k_{61} x_7 - k_{81}x_8,\\
  x_9' = k_{11} x_9 - k_{91} x_9 - k_{12} x_9,
\end{cases}
\end{equation}
where $u$ is an input variable.
The outputs used in~\cite{Fujita2010} were
\begin{equation}\label{eq:Akt_out}
\begin{cases}
  y_1 = a_1(x_2 + x_3),\\
  y_2 = a_2(x_5 + x_7),\\
  y_3 = a_3x_6.
\end{cases}
\end{equation}
\end{example}

\end{document}